\pdfoutput=1
\documentclass[11pt,reqno]{amsart}

\usepackage[letterpaper,left=0.80in,right=0.80in,top=0.95in,bottom=0.95in]{geometry}

\usepackage[T1]{fontenc}
\usepackage[utf8]{inputenc}
\usepackage{lmodern}
\usepackage{amsmath,amssymb,amsthm,mathtools}
\usepackage{enumitem}
\usepackage{microtype}
\usepackage{hyperref}
\hypersetup{
  hidelinks,
  pdftitle={On a Class of Hypergeometric Sums via Recurrences and Product Binomial-Harmonic Identities},
  pdfauthor={Narendra Bhandari}
}
\allowdisplaybreaks
\numberwithin{equation}{section}

\newtheorem{theorem}{Theorem}[section]
\newtheorem{proposition}[theorem]{Proposition}
\newtheorem{corollary}[theorem]{Corollary}
\newtheorem{lemma}[theorem]{Lemma}
\theoremstyle{definition}
\newtheorem{example}[theorem]{Example}
\theoremstyle{remark}
\newtheorem{remark}[theorem]{Remark}
\newtheorem*{remark*}{Remark}

\newcommand{\eps}{\varepsilon}
\newcommand{\Bcal}{\mathcal B}
\newcommand{\Hcal}{\mathcal H}
\newcommand{\RePart}{\operatorname{Re}}

\title[Hypergeometric moment recurrences]{Recurrences for Hypergeometric Moments and Binomial-Harmonic Sums}

\author{Narendra Bhandari}
\address{Department of Mathematics, University of North Texas, Denton, TX 76201, USA}
\email{narendra.bhandari@unt.edu}
\email{narenbhandari04@gmail.com}

\subjclass[2020]{Primary 33C05; Secondary 33C20, 11B65}
\keywords{Hypergeometric series, harmonic numbers, binomial coefficients, alternating series, complete elliptic integrals, Dirichlet \(L\)-values}
\date{July 2026}

\begin{document}

\begin{abstract}
Let
\[
F(a,b;c;x)={}_2F_1(a,b;c;x),
\qquad
\Phi_{m,\varepsilon}(\lambda;a,b,c)
=
\int_0^1 x^{m+\lambda}F(a,b;c;\varepsilon x)\,dx,
\quad \varepsilon=\pm1.
\]
We study these moments by deriving and solving a first-order recurrence
in \(m\). This recurrence leads to formulas for higher powers of the
denominator and for denominators of the form \((dn+m+1)^K\), with
applications to product-binomial series and moments of complete elliptic
integrals. Differentiation with respect to \(c\) gives corresponding
recurrences for harmonic-number weights, whose initial values are
described using Bell polynomials, logarithms, zeta values, and
Dirichlet \(L\)-values. Finally, comparison with a terminating
\({}_3F_2(1)\) formula also gives finite hypergeometric and
binomial--harmonic identities.
\end{abstract}
\maketitle

\section{Introduction}\label{sec:introduction}

We study hypergeometric sums containing a denominator of the form
\(n+m+1+\lambda\). Such sums occur in identities with binomial
coefficients, in moments of elliptic integrals, and in series involving
harmonic numbers. Let
\begin{equation}\label{eq:generalFintro}
F(a,b;c;x)
=
{}_2F_1(a,b;c;x)
=
\sum_{n=0}^{\infty}
\frac{(a)_n(b)_n}{(c)_n\,n!}x^n,
\end{equation}
where \((u)_n\) denotes the Pochhammer symbol. For \(m\geq0\),
\(\varepsilon\in\{1,-1\}\), and a complex parameter \(\lambda\), define
\begin{equation}\label{eq:generalPhiIntro}
\Phi_{m,\varepsilon}(\lambda;a,b,c)
=
\int_0^1
x^{m+\lambda}F(a,b;c;\varepsilon x)\,dx.
\end{equation}
Termwise integration gives
\begin{equation}\label{eq:generalSeriesIntro}
\Phi_{m,\varepsilon}(\lambda;a,b,c)
=
\sum_{n=0}^{\infty}
\frac{\varepsilon^n(a)_n(b)_n}{(c)_n\,n!}
\frac{1}{n+m+1+\lambda}.
\end{equation}
We call the sum non-alternating when \(\varepsilon=1\) and alternating
when \(\varepsilon=-1\).

Put \(M=m+\lambda\). The main result is the first-order recurrence
\begin{equation}\label{eq:centralIntroRec}
\varepsilon(M+1-a)(M+1-b)\Phi_{m,\varepsilon}
=
M(M+1-c)\Phi_{m-1,\varepsilon}
+
E_\varepsilon(M;a,b,c),
\end{equation}
where the arguments of the moment functions have been suppressed.
The upper-endpoint term \(E_\varepsilon\) is computed explicitly. In
the non-alternating case, it is a gamma quotient independent of \(M\).
In the alternating case, it is linear in \(M\) and involves only the
value and derivative of \({}_2F_1(a,b;c;z)\) at \(z=-1\). The recurrence
follows from Euler's differential equation by two integrations by parts
and admits an explicit product--sum solution.

The parameter \(\lambda\) allows one recurrence to control every
positive denominator power. Expanding \eqref{eq:generalSeriesIntro} in
powers of \(\lambda\) gives finite formulas for
\[
\sum_{n=0}^{\infty}
\frac{\varepsilon^n(a)_n(b)_n}{(c)_n\,n!}
\frac{1}{(n+m+1)^K},
\qquad K\geq1.
\]
Separating \(m\) into residue classes also treats denominators of the
form \((dn+m+1)^K\), where \(d\) is any positive integer.

Differentiation with respect to \(c\) gives corresponding recurrences
for harmonic-number weights. At an arbitrary base value \(c_0\), the
derivatives produce the shifted harmonic sums
\[
H_n^{(r)}(c_0)
=
\sum_{k=0}^{n-1}\frac{1}{(c_0+k)^r}.
\]
Complete Bell polynomials organize the weights arising from higher
derivatives, and the recurrence at order \(r\) depends only on orders
\(r\) and \(r-1\). At \(c_0=1\), these weights reduce to the usual
harmonic-number combinations. In the non-alternating case, the initial
moments with denominator \(n+1\) admit a Bell-polynomial formula valid
at every order. For rational parameters, these values can be expressed
using logarithms, zeta values, and Dirichlet \(L\)-values.

We also compare the non-alternating moment formula with Kouba's
terminating \({}_3F_2(1)\) identity
\cite[Corollary~3.3]{Kouba2024}. This gives a finite hypergeometric
identity. Differentiating it with respect to the lower parameter gives
finite binomial--harmonic identities. We state a general formula and
give examples with squared central binomial coefficients and with
\(\binom{2n}{n}\binom{4n}{2n}\).

\subsection{The case
\texorpdfstring{\((a,b,c)=(\alpha,1-\alpha,1)\)}
{(a,b,c)=(alpha,1-alpha,1)}}

The formulas become especially simple for
\begin{equation}\label{eq:complementaryIntro}
(a,b,c)=(\alpha,1-\alpha,1),
\qquad 0<\alpha<1.
\end{equation}
Put
\(
a_n(\alpha)
=
\frac{(\alpha)_n(1-\alpha)_n}{(n!)^2}.
\)
Several rational values of \(\alpha\) give products of binomial
coefficients:
\begin{equation}\label{eq:binomialexamples}
\begin{aligned}
a_n\left(\frac12\right)
&=
\frac{1}{16^n}\binom{2n}{n}^{\!2},\\
a_n\left(\frac13\right)
&=
\frac{1}{27^n}\binom{3n}{n}\binom{2n}{n},\\
a_n\left(\frac14\right)
&=
\frac{1}{64^n}\binom{2n}{n}\binom{4n}{2n}.
\end{aligned}
\end{equation}
For this specialization, the non-alternating upper-endpoint term in
\eqref{eq:centralIntroRec} reduces to
\(\sin(\pi\alpha)/\pi\). We obtain formulas for the related moments
with denominator \(2n+m+1\), together with representations in terms
of complete elliptic integrals. The same recurrence also gives
alternating identities, cancellation formulas, and formulas with
higher powers of the denominator.

\subsection{Relation with earlier work}

The main tools used here are classical. Standard references for Gauss
and generalized hypergeometric functions include
\cite{AndrewsAskeyRoy,Bailey,Slater,DLMF}. Related recurrences may also
be studied using integral transforms and computer algebra methods; see
\cite{ParisKaminski,Stanley,Zeilberger,Chyzak,Koutschan}. Parameter
differentiation has been widely used to obtain harmonic-number
identities from hypergeometric formulas; representative references
include
\cite{PauleSchneider,ChuDeDonno,WeiGong,ChuCampbell,Chen2025,Au2026}.
Recent work on related binomial-harmonic families includes
\cite{LiChu2023,LiChu2024,LiChu2025,LiChu2026}.

Product-binomial and elliptic-integral sums have also been studied from
several viewpoints. Central-binomial sums and their special constants
are treated in \cite{BorweinBroadhurstKamnitzer}. Campbell,
D'Aurizio, and Sondow \cite{CampbellDAS} connect hypergeometric
functions, complete elliptic integrals, and Fourier--Legendre
expansions. Campbell and Chen \cite{CampbellChen} obtain explicit
families involving squared central binomial coefficients, while
Cantarini \cite{Cantarini2025} develops related Fourier--Legendre and
Jacobi connection formulas.

For \(\varepsilon=1\), recurrence \eqref{eq:centralIntroRec} is
equivalent, under the substitution \(x=m+\lambda+1\), to the
\({}_3F_2(1)\) recurrence obtained by Chen
\cite[Lemma~2, Eq.~(13)]{Chen2021}. The derivation given here is
different and follows directly from Euler's differential equation and
integration by parts.

The contribution of the present paper is a direct moment-based
derivation of \eqref{eq:centralIntroRec}, the unified computation of its
non-alternating and alternating upper-endpoint terms, and its
product--sum solution. These results give uniform formulas for
denominator powers, denominators of the form \((dn+m+1)^K\),
cancellation identities, and harmonic weights of every order. The
product-binomial applications also give recurrence-based evaluations
of several related families. Comparing these evaluations with Kouba's
terminating formulas \cite{Kouba2024} gives finite hypergeometric and
binomial--harmonic identities.

\subsection{Outline of the paper}

Section~\ref{sec:general} derives and solves the recurrence, including
all positive denominator powers, a first-shift cancellation identity,
and denominators of the form \((dn+m+1)^K\).
Section~\ref{sec:applications} gives product-binomial and
elliptic-integral applications. Sections~\ref{sec:harmonicgeneral} and
\ref{sec:harmonicinitial} treat harmonic-number weights, their initial
moments, and rational values of \(\alpha\). Section~\ref{sec:finiteidentities}
records finite hypergeometric and binomial--harmonic identities.

\section{A recurrence for Gauss hypergeometric moments}
\label{sec:general}

For \(n\geq1\), we use the Pochhammer symbol
\[
(u)_n=u(u+1)\cdots(u+n-1)=\frac{\Gamma(u+n)}{\Gamma(u)},
\qquad
(u)_0=1.
\]
We denote
\begin{equation}\label{eq:generalF}
F(a,b;c;x)={}_2F_1(a,b;c;x),
\end{equation}
where
\(
c\notin\{0,-1,-2,\ldots\}.
\)
For \(\eps\in\{1,-1\}\), put
\(
G_\eps(x)=F(a,b;c;\eps x),
\) and define
\begin{equation}\label{eq:generalPhi}
\Phi_{m,\eps}(\lambda;a,b,c)
=
\int_0^1 x^{m+\lambda}G_\eps(x)\,dx.
\end{equation}

Set
\(
M=m+\lambda.
\)
At \(x=0\), the condition
\(
\RePart M>-1
\)
is sufficient for integrability. In the alternating case,
\(F(a,b;c;-x)\) is analytic on the interval \(0\leq x\leq1\), since
\(-1\) is an ordinary point of the hypergeometric differential
equation. Hence \(\RePart M>-1\) is the only endpoint condition needed
for the alternating integral.

In the non-alternating case, the standard continuation formulas at \(x=1\)
\cite[\S15.8(ii)]{DLMF} show that, when the hypergeometric series does not terminate,
\eqref{eq:generalPhi} converges under
\[
\RePart M>-1,
\qquad
\RePart(c-a-b)>-1.
\]
The case \(c=a+b\), in which \(F(a,b;c;x)\) has
logarithmic growth at \(x=1\), is included. If \(a\) or \(b\) is a
non-positive integer, then \(F(a,b;c;x)\) is a polynomial, and only the
condition \(\RePart M>-1\) is required.

Assume first that neither \(a\) nor \(b\) is a non-positive integer.
The gamma-ratio asymptotic \cite[Eq.~(5.11.13)]{DLMF} gives
\begin{equation}\label{eq:generalCoefficientAsymptotic}
\frac{(a)_n(b)_n}{(c)_n n!}
=
\frac{\Gamma(c)}{\Gamma(a)\Gamma(b)}
n^{a+b-c-1}
\left(1+O\left(\frac1n\right)\right)
\end{equation}
as \(n\to\infty\). In particular,
\[
\left|
\frac{(a)_n(b)_n}{(c)_n n!}
\right|
=
O\left(n^{\RePart(a+b-c)-1}\right).
\]
It follows that, in the non-alternating case, termwise integration is
justified when
\(
\RePart(c-a-b)>-1.
\)
We then obtain
\begin{equation}\label{eq:generalSeries}
\Phi_{m,\eps}(\lambda;a,b,c)
=
\sum_{n=0}^{\infty}
\frac{\eps^n(a)_n(b)_n}{(c)_n n!}
\frac{1}{n+m+1+\lambda}.
\end{equation}

For \(\eps=-1\), the series in \eqref{eq:generalSeries} converges
absolutely when
\(
\RePart(c-a-b)>-1
\)
and converges conditionally when
\[
-2<\RePart(c-a-b)\leq-1.
\]
The latter statement follows from
\eqref{eq:generalCoefficientAsymptotic} and summation by parts (or a
complex-valued form of Dirichlet's test). To identify the series with
the integral throughout the range \(\RePart(c-a-b)>-2\), introduce a
parameter \(0<t<1\). The power series for
\(F(a,b;c;-tx)\) may then be integrated term by term. As
\(t\to1^-\), dominated convergence applies to the integral because the
chosen hypergeometric solution is analytic on a neighborhood of the compact interval \([-1,0]\), while Abel's theorem gives the limit of the
convergent endpoint series. If \(\RePart(c-a-b)\leq-2\), the
alternating integral \eqref{eq:generalPhi} remains well defined whenever
\(\RePart M>-1\), although the series in \eqref{eq:generalSeries} need
not converge.

If \(a\) or \(b\) is a non-positive integer, all the preceding series
terminate, so the identities hold without these convergence
restrictions. In the product-binomial applications considered later,
\[
c-a-b=0,
\]
and both the integral and series representations are therefore valid.

Within the preceding series-convergence ranges,
\eqref{eq:generalSeries} may also be written as
\begin{equation}\label{eq:general3F2}
\Phi_{m,\eps}(\lambda;a,b,c)
=
\frac{1}{m+1+\lambda}
{}_3F_2\left(
\begin{matrix}
a,b,m+1+\lambda\\
c,m+2+\lambda
\end{matrix};
\eps
\right).
\end{equation}
Thus, the recurrence below also gives a relation between nearby \({}_3F_2\) functions.

To evaluate the non-alternating upper-endpoint term, we first record a standard
contiguous identity. It treats the finite, the case $c=a+b$, and
integrably singular cases at \(x=1\) in one formula.

\begin{lemma}\label{lem:contiguousendpoint}
Let
\(
c\notin\{0,-1,-2,\ldots\}\) and \(\RePart(c-a-b)>-1.
\)
Then
\begin{equation}\label{eq:contiguousendpoint}
(1-x)F'(a,b;c;x)
+
(c-a-b)F(a,b;c;x)
=
\frac{(c-a)(c-b)}{c}
F(a,b;c+1;x).
\end{equation}
Consequently,
\begin{equation}\label{eq:generalendpointlimit}
\lim_{x\to1^-}
\left[
(1-x)F'(a,b;c;x)
+
(c-a-b)F(a,b;c;x)
\right]
=
\frac{\Gamma(c)\Gamma(c-a-b+1)}
{\Gamma(c-a)\Gamma(c-b)}.
\end{equation}
\end{lemma}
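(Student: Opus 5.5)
I will first prove the contiguous identity \eqref{eq:contiguousendpoint} as an identity of power series on \(|x|<1\). Then I will take the limit \(x\to1^-\) using Gauss's summation theorem. Throughout, write \(F(a,b;c;x)=\sum_n A_n x^n\) with \(A_n=\frac{(a)_n(b)_n}{(c)_n n!}\).

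For the identity, I compare coefficients of \(x^n\). On the left, the coefficient is
\[
(n+1)A_{n+1}-nA_n+(c-a-b)A_n.
\]
Using \(A_{n+1}=A_n\frac{(a+n)(b+n)}{(c+n)(n+1)}\), this becomes
\[
A_n\left[\frac{(a+n)(b+n)}{c+n}+c-a-b-n\right]
=A_n\,\frac{(a+n)(b+n)+(c+n)(c-a-b-n)}{c+n}.
\]
The numerator simplifies to \(c^2-ac-bc+ab=(c-a)(c-b)\), so the coefficient is \(A_n\frac{(c-a)(c-b)}{c+n}\). On the right, the coefficient is
\[
\frac{(c-a)(c-b)}{c}\cdot\frac{(a)_n(b)_n}{(c+1)_n n!},
\]
and this agrees because \((c+1)_n=(c)_n(c+n)/c\). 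The series converge for \(|x|<1\), so \eqref{eq:contiguousendpoint} holds there. Note that \(c+1\) is not a non-positive integer, so the right side is well defined.

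For the limit, the right-hand side \(F(a,b;c+1;x)\) has parameter excess \((c+1)-a-b\), whose real part is positive by the hypothesis \(\RePart(c-a-b)>-1\). By \eqref{eq:generalCoefficientAsymptotic} (with \(c\) replaced by \(c+1\)), its coefficients are \(O(n^{\RePart(a+b-c)-2})\), which is summable. Hence the series converges absolutely at \(x=1\), and Abel's theorem gives
\[
\lim_{x\to1^-}F(a,b;c+1;x)=F(a,b;c+1;1)=\frac{\Gamma(c+1)\Gamma(c+1-a-b)}{\Gamma(c+1-a)\Gamma(c+1-b)}
\]
by Gauss's theorem. The terminating case is trivially included.

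It remains to multiply by \((c-a)(c-b)/c\) and use \(\Gamma(c+1)=c\Gamma(c)\) and \(\Gamma(c+1-a)=(c-a)\Gamma(c-a)\), and likewise for \(b\). This yields \eqref{eq:generalendpointlimit}. When \(c=a\) or \(c=b\), both sides vanish: the prefactor is zero, and \(1/\Gamma(c-a)=0\). A similar check is needed when \(c-a\) is a non-positive integer, where the ratio is interpreted by continuity or the reciprocal gamma function.

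The only delicate step is this bookkeeping of degenerate gamma values. The main analytic point is just the convergence at \(x=1\) of the shifted series, which follows from the stated asymptotic.
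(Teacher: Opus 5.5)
Your proof is correct and follows the paper's route. You verify the contiguous relation \eqref{eq:contiguousendpoint} by comparing coefficients, which the paper cites from Andrews--Askey--Roy and notes can also be checked this way. You then pass to \(x\to1^-\) using Gauss's summation for \(F(a,b;c+1;1)\) together with \(\Gamma(z+1)=z\Gamma(z)\), as the paper does. Your remark on the degenerate cases \(c-a,\,c-b\in\{0,-1,-2,\ldots\}\), resolved through the entire function \(1/\Gamma\) and the identity \(z/\Gamma(z+1)=1/\Gamma(z)\), is a small point of care that the paper leaves implicit.
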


\begin{proof}
Identity \eqref{eq:contiguousendpoint} is a standard relation between
Gauss hypergeometric functions whose parameters differ by one; see
\cite[\S2.5, Eq.~(2.5.10), p.~97]{AndrewsAskeyRoy}. It also follows by
comparing coefficients in the defining series.
Since
\[
\RePart\bigl((c+1)-a-b\bigr)>0,
\]
Gauss's summation formula \cite[Theorem 2.2.2, p.~66]{AndrewsAskeyRoy} gives
\[
F(a,b;c+1;1)
=
\frac{\Gamma(c+1)\Gamma(c-a-b+1)}
{\Gamma(c-a+1)\Gamma(c-b+1)}.
\]
Letting \(x\to1^-\) in \eqref{eq:contiguousendpoint} and using the gamma property
\(
\Gamma(z+1)=z\Gamma(z)
\)
gives \eqref{eq:generalendpointlimit}.
\end{proof}
\begin{theorem}\label{thm:generalrec}
Let \(m\geq1\), put \(M=m+\lambda\), and assume
\(\RePart M>0\). In the non-alternating case \(\eps=1\), also assume
\(
\RePart(c-a-b)>-1.
\)
Then
\begin{equation}\label{eq:generalrec}
\eps(M+1-a)(M+1-b)\Phi_{m,\eps}
=
M(M+1-c)\Phi_{m-1,\eps}
+
E_\eps(M;a,b,c),
\end{equation}
where \(\Phi_{m,\eps}\) stand for \(\Phi_{m,\eps}(\lambda;a,b,c)\). The upper-endpoint term is
\begin{equation}\label{eq:Eplusgeneral}
E_+(a,b,c)
=
\frac{\Gamma(c)\Gamma(c-a-b+1)}
{\Gamma(c-a)\Gamma(c-b)}
\end{equation}
in the non-alternating case, and
\begin{align}\label{eq:Eminusgeneral}
E_-(M;a,b,c)
&=
(c+a+b-2M-2)
{}_2F_1(a,b;c;-1)-2\left.
\frac{d}{dz}{}_2F_1(a,b;c;z)
\right|_{z=-1}
\end{align}
in the alternating case.
\end{theorem}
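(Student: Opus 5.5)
The plan is to start from Euler's hypergeometric differential equation for \(G_\eps(x)=F(a,b;c;\eps x)\), namely
\[
x(1-\eps x)G_\eps''+\bigl[c-(a+b+1)\eps x\bigr]G_\eps'-\eps ab\,G_\eps=0 .
\]
I will multiply it by \(x^{M}\) and integrate over \([\delta,1-\delta]\) (for \(\eps=-1\) one may take the upper limit to be \(1\) directly). Two integrations by parts then move all derivatives off \(G_\eps\), and afterwards I let \(\delta\to0^+\).

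\emph{First integration by parts.} Integrating the second-order term once gives
\[
\bigl[(x^{M+1}-\eps x^{M+2})G_\eps'\bigr]-\int\bigl((M+1)x^M-\eps(M+2)x^{M+1}\bigr)G_\eps' .
\]
Combining this with the first-order term of the equation, the remaining integral is
\[
\int\bigl[(c-M-1)x^M+\eps(M+1-a-b)x^{M+1}\bigr]G_\eps'\,dx .
\]

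\emph{Second integration by parts.} Integrating this once more produces the boundary term
\[
\bigl[((c-M-1)x^M+\eps(M+1-a-b)x^{M+1})G_\eps\bigr]
\]
together with
\[
-(c-M-1)M\,\Phi_{m-1,\eps}-\eps(M+1)(M+1-a-b)\,\Phi_{m,\eps}.
\]
Adding the term \(-\eps ab\,\Phi_{m,\eps}\) and using the factorization \((M+1)(M+1-a-b)+ab=(M+1-a)(M+1-b)\), I obtain
\[
\eps(M+1-a)(M+1-b)\Phi_{m,\eps}=M(M+1-c)\Phi_{m-1,\eps}+B ,
\]
where \(B\) is the total boundary contribution.

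\emph{Lower endpoint.} Here I use \(\RePart M>0\). Since \(G_\eps\) and \(G_\eps'\) are analytic at \(0\), the terms \(x^MG_\eps\) and \(x^{M+1}G_\eps'\) vanish as \(x\to0^+\). The same hypothesis makes \(x^{M-1}G_\eps\) integrable, so \(\Phi_{m-1,\eps}\) is defined. This is why \(\RePart M>0\) is needed rather than \(\RePart M>-1\).

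\emph{Alternating case.} For \(\eps=-1\), \(G_-\) is analytic on \([0,1]\). The boundary term at \(x=1\) is
\[
2G_-'(1)+\bigl[(c-M-1)-(M+1-a-b)\bigr]G_-(1).
\]
Substituting \(G_-(1)={}_2F_1(a,b;c;-1)\) and \(G_-'(1)=-\frac{d}{dz}{}_2F_1(a,b;c;z)\big|_{z=-1}\) gives exactly \eqref{eq:Eminusgeneral}.

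\emph{Non-alternating case.} For \(\eps=1\), I rewrite the boundary expression at \(x\) as
\[
x^{M+1}\bigl[(1-x)G_+'+(c-a-b)G_+\bigr]+(c-a-b)(x^M-x^{M+1})G_+-(M+1-a-b)(1-x)x^MG_+ .
\]
By Lemma~\ref{lem:contiguousendpoint}, the first bracket tends to \(E_+\) as \(x\to1^-\). The remaining two terms are \(O\bigl((1-x)G_+(x)\bigr)\). This quantity tends to \(0\) because \(\RePart(c-a-b)>-1\): near \(x=1\), \(G_+\) is \(O\bigl(1+|1-x|^{\RePart(c-a-b)}\bigr)\) or \(O(\log(1-x))\) by the continuation formulas cited earlier.

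The main obstacle is the careful justification at \(x=1\) in the non-alternating case. Integrating by parts on \([\delta,1-\delta]\) avoids differentiating at the singularity. Each integral then converges as \(\delta\to0\), since \(\Phi_{m,+}\) and \(\Phi_{m-1,+}\) exist under the stated hypotheses. The boundary limits are exactly those controlled by Lemma~\ref{lem:contiguousendpoint} and the estimate \((1-x)G_+\to0\).
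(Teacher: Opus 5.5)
Your proposal is correct and is essentially the paper's proof. It uses the same multiplier \(x^M\), the same two integrations by parts on a truncated interval, and the same factorization \((M+1)(M+1-a-b)+ab=(M+1-a)(M+1-b)\). The endpoint evaluations also match: \(p(1)=2\) and \(q(1)=c+a+b-2M-2\) for \(\eps=-1\), and Lemma~\ref{lem:contiguousendpoint} together with \((1-x)F\to0\) for \(\eps=1\). Your two remainder terms in the non-alternating case simply add up to the paper's single term \((c-M-1)x^M(1-x)F(a,b;c;x)\).
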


\begin{proof}
After replacing \(x\) by \(\eps x\), Euler's differential equation
\cite[\S2.3, Eq.~(2.3.5)]{AndrewsAskeyRoy} becomes
\begin{equation}\label{eq:generalODE}
x(1-\eps x)G_\eps''(x)
+
\{c-\eps(a+b+1)x\}G_\eps'(x)
-
\eps abG_\eps(x)
=
0.
\end{equation}
We first carry out the calculation on a compact interval
\([\delta,r]\), where \(0<\delta<r<1\). This allows both integrations by parts to be carried out in the usual way. The integration-by-parts boundary contributions are combined before the
limits \(\delta\to0^+\) and \(r\to1^-\) are taken; in the
non-alternating case, the separate terms need not have finite limits.

Put
\[
p(x)=x^{M+1}(1-\eps x).
\]
The second-derivative term gives
\[
\int_\delta^r p(x)G_\eps''(x)\,dx
=
\left[p(x)G_\eps'(x)\right]_\delta^r
-
\int_\delta^r p'(x)G_\eps'(x)\,dx.
\]
Since
\[
p'(x)
=
(M+1)x^M-\eps(M+2)x^{M+1},
\]
the remaining coefficient of \(G_\eps'\) is
\begin{align*}
q(x)
&=
x^M\{c-\eps(a+b+1)x\}-p'(x)
\\
&=
(c-M-1)x^M
+
\eps(M+1-a-b)x^{M+1}.
\end{align*}
Integrating this term once more gives
\[
\int_\delta^r q(x)G_\eps'(x)\,dx
=
\left[q(x)G_\eps(x)\right]_\delta^r
-
\int_\delta^r q'(x)G_\eps(x)\,dx,
\]
where
\[
q'(x)
=
M(c-M-1)x^{M-1}
+
\eps(M+1)(M+1-a-b)x^M.
\]

Combining all terms in \eqref{eq:generalODE} on \([\delta,r]\)
gives the corresponding identity with truncated moments and full boundary
contribution
\[
\left[
p(x)G_\eps'(x)+q(x)G_\eps(x)
\right]_\delta^r.
\]
We now let \(\delta\to0^+\) and \(r\to1^-\). The lower-endpoint contribution at \(x=0\) vanishes because \(G_\eps\) and \(G_\eps'\) are bounded near zero
and \(\RePart M>0\). The truncated integrals tend to the moments in
\eqref{eq:generalPhi}. Hence
\begin{align}\label{eq:beforefactor}
0
&=
E_\eps(M;a,b,c)
-
M(c-M-1)\Phi_{m-1,\eps}
\\
&\quad
-
\eps\{(M+1)(M+1-a-b)+ab\}\Phi_{m,\eps},
\notag
\end{align}
where the upper-endpoint term is
\begin{equation}\label{eq:Eupper}
E_\eps(M;a,b,c)
=
\lim_{r\to1^-}
\left[p(r)G_\eps'(r)+q(r)G_\eps(r)\right].
\end{equation}
Since
\[
(M+1)(M+1-a-b)+ab
=
(M+1-a)(M+1-b),
\]
equation \eqref{eq:beforefactor} becomes
\eqref{eq:generalrec} once the upper-endpoint term at \(x=1\) is evaluated.

For \(\eps=1\), we have
\[
q(x)
=
(c-a-b)x^{M+1}
+
(c-M-1)x^M(1-x).
\]
Therefore, the upper-endpoint expression can be written exactly
as
\begin{align*}
p(x)F'(a,b;c;x)+q(x)F(a,b;c;x)
&=
x^{M+1}
\left[
(1-x)F'(a,b;c;x)
+
(c-a-b)F(a,b;c;x)
\right]
\\
&\quad+
(c-M-1)x^M(1-x)F(a,b;c;x).
\end{align*}
The condition
\(
\RePart(c-a-b)>-1
\)
and the standard endpoint behavior of the Gauss function imply that
\[
(1-x)F(a,b;c;x)\longrightarrow0
\qquad (x\to1^-).
\]
Since \(x^M\to1\), Lemma~\ref{lem:contiguousendpoint} gives
\[
E_+(a,b,c)
=
\frac{\Gamma(c)\Gamma(c-a-b+1)}
{\Gamma(c-a)\Gamma(c-b)}.
\]

For \(\eps=-1\), the point \(-1\) is regular. At \(x=1\),
\[
G_-(1)=F(a,b;c;-1),
\qquad
G_-'(1)=-F'(a,b;c;-1).
\]
Moreover,
\(
p(1)=2
\)
and
\(
q(1)=c+a+b-2M-2.
\)
Substitution into \eqref{eq:Eupper} gives
\[
E_-(M;a,b,c)
=
-2F'(a,b;c;-1)
+
(c+a+b-2M-2)F(a,b;c;-1),
\]
which is \eqref{eq:Eminusgeneral}.
\end{proof}

\begin{theorem}
\label{thm:generalsolution}

Let \(m\geq0\) and assume
\(
\RePart\lambda>-1.
\)
In the non-alternating case \(\eps=1\), also assume
\(
\RePart(c-a-b)>-1.
\)
Suppose that
\(
(j+\lambda+1-a)(j+\lambda+1-b)\neq0,
\, 1\leq j\leq m.
\)
Define
\begin{equation}\label{eq:rhoj}
\rho_j(\lambda;a,b,c,\eps)
=
\frac{(j+\lambda)(j+\lambda+1-c)}
{\eps(j+\lambda+1-a)(j+\lambda+1-b)}.
\end{equation}
Then
\begin{align}
\Phi_{m,\eps}(\lambda;a,b,c)
&=
\left(\prod_{\ell=1}^{m}\rho_\ell\right)
\Phi_{0,\eps}(\lambda;a,b,c)
\label{eq:generalsolution}+
\sum_{j=1}^{m}
\frac{E_\eps(j+\lambda;a,b,c)}
{\eps(j+\lambda+1-a)(j+\lambda+1-b)}
\prod_{\ell=j+1}^{m}\rho_\ell.
\end{align}
Empty products are understood to be \(1\), and the sum is empty when
\(m=0\).
\end{theorem}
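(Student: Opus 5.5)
The plan is to unroll the recurrence of Theorem~\ref{thm:generalrec} by induction on \(m\). Fix \(\lambda\) with \(\RePart\lambda>-1\). For \(1\leq j\leq m\), set \(M_j=j+\lambda\), so that \(\RePart M_j=j+\RePart\lambda>0\). Theorem~\ref{thm:generalrec} therefore applies at every level \(j=1,\dots,m\), with the same \(\lambda\) and with the convergence hypothesis \(\RePart(c-a-b)>-1\) carried along when \(\eps=1\). All moments \(\Phi_{j,\eps}(\lambda;a,b,c)\) with \(0\leq j\leq m\) are well defined, since \(\RePart(j+\lambda)>-1\).

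Applied at level \(j\), the recurrence \eqref{eq:generalrec} reads
\[
\eps(j+\lambda+1-a)(j+\lambda+1-b)\Phi_{j,\eps}
=(j+\lambda)(j+\lambda+1-c)\Phi_{j-1,\eps}+E_\eps(j+\lambda;a,b,c).
\]
By hypothesis the factor \((j+\lambda+1-a)(j+\lambda+1-b)\) does not vanish, and \(\eps=\pm1\). We may therefore divide by \(\eps(j+\lambda+1-a)(j+\lambda+1-b)\), which gives the first-order linear inhomogeneous recurrence
\[
\Phi_{j,\eps}=\rho_j\,\Phi_{j-1,\eps}+\frac{E_\eps(j+\lambda;a,b,c)}{\eps(j+\lambda+1-a)(j+\lambda+1-b)},
\]
with \(\rho_j\) as in \eqref{eq:rhoj}.

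Next, induct on \(m\). For \(m=0\), formula \eqref{eq:generalsolution} reduces to \(\Phi_{0,\eps}=\Phi_{0,\eps}\), because the product is empty and the sum is empty. Assume \eqref{eq:generalsolution} holds for \(m-1\), and substitute it into the recurrence at \(j=m\). Multiplying by \(\rho_m\) extends \(\prod_{\ell=1}^{m-1}\rho_\ell\) to \(\prod_{\ell=1}^{m}\rho_\ell\). It also extends each tail product \(\prod_{\ell=j+1}^{m-1}\rho_\ell\) to \(\prod_{\ell=j+1}^{m}\rho_\ell\). The new inhomogeneous term is exactly the \(j=m\) summand, whose tail product \(\prod_{\ell=m+1}^{m}\rho_\ell\) is empty and hence equal to \(1\). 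This proves \eqref{eq:generalsolution} for \(m\).

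There is no real obstacle here. The only point needing care is to confirm that the hypotheses of Theorem~\ref{thm:generalrec} hold at every intermediate index. Theorem~\ref{thm:generalrec} requires \(\RePart M>0\) with \(m\geq1\), and this is guaranteed for all \(j\geq1\) by \(\RePart\lambda>-1\). Note also that \(\Phi_{0,\eps}\) enters only as the value \(\Phi_{j-1,\eps}\) when \(j=1\), so it never needs a recurrence of its own.
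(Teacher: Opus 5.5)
Your proposal is correct and follows the paper's own proof: apply Theorem~\ref{thm:generalrec} at each level \(j=1,\dots,m\), divide by the nonvanishing factor \(\eps(j+\lambda+1-a)(j+\lambda+1-b)\) to get the first-order recurrence \(\Phi_{j,\eps}=\rho_j\Phi_{j-1,\eps}+E_\eps(j+\lambda;a,b,c)/\{\eps(j+\lambda+1-a)(j+\lambda+1-b)\}\), and unroll it. Your formal induction and your check that \(\RePart(j+\lambda)>0\) for every \(j\ge1\) make explicit what the paper's ``iterate from \(j=1\) to \(j=m\)'' leaves implicit.
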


\begin{proof}
Apply Theorem~\ref{thm:generalrec} with
\(m=j\), then the recurrence becomes
\[
\eps(j+\lambda+1-a)(j+\lambda+1-b)\Phi_{j,\eps}
=
(j+\lambda)(j+\lambda+1-c)\Phi_{j-1,\eps}
+
E_\eps(j+\lambda;a,b,c).
\]
Dividing by
\(
\eps(j+\lambda+1-a)(j+\lambda+1-b)
\)
gives
\begin{equation}\label{eq:firstorderiteration}
\Phi_{j,\eps}
=
\rho_j\Phi_{j-1,\eps}
+
\frac{E_\eps(j+\lambda;a,b,c)}
{\eps(j+\lambda+1-a)(j+\lambda+1-b)}.
\end{equation}

Equation \eqref{eq:firstorderiteration} is a first-order nonhomogeneous recurrence. Iterating it from \(j=1\) to \(j=m\) gives
\[
\begin{aligned}
\Phi_{m,\eps}
&=
\rho_m\rho_{m-1}\cdots\rho_1\Phi_{0,\eps}
+
\sum_{j=1}^{m}
\frac{E_\eps(j+\lambda;a,b,c)}
{\eps(j+\lambda+1-a)(j+\lambda+1-b)}
\rho_m\rho_{m-1}\cdots\rho_{j+1}.
\end{aligned}
\]
This is precisely \eqref{eq:generalsolution}.
\end{proof}
For \(K\ge1\), define
\begin{equation}\label{eq:generalAK}
A_{m,\eps}^{(K)}(a,b,c)
=
\sum_{n=0}^{\infty}
\frac{\eps^n(a)_n(b)_n}{(c)_n n!}
\frac{1}{(n+m+1)^K},
\end{equation}
whenever the series converges.
For \(|\lambda|<m+1\),
\[
\frac{1}{n+m+1+\lambda}
=
\sum_{r=0}^{\infty}
\frac{(-1)^r\lambda^r}{(n+m+1)^{r+1}}.
\]
Whenever the moment series is locally uniformly convergent for
\(\lambda\) near \(0\), the sums may be interchanged, giving
\[
\Phi_{m,\eps}(\lambda;a,b,c)
=
\sum_{r=0}^{\infty}
(-1)^r A_{m,\eps}^{(r+1)}(a,b,c)\lambda^r.
\]
Consequently, for every \(K\ge1\),
\begin{equation}\label{eq:generalpowerextract}
A_{m,\eps}^{(K)}(a,b,c)
=
(-1)^{K-1}[\lambda^{K-1}]
\Phi_{m,\eps}(\lambda;a,b,c)
=
\frac{(-1)^{K-1}}{(K-1)!}
\left.
\frac{\partial^{K-1}}
{\partial\lambda^{K-1}}
\Phi_{m,\eps}(\lambda;a,b,c)
\right|_{\lambda=0}.
\end{equation}
To make this coefficient extraction explicit, we now expand the finite
factors in the product--sum solution of
Theorem~\ref{thm:generalsolution}.
For the explicit coefficient form, define
\begin{align}
\mathcal{R}_m(\lambda)
&=
\eps^m
\frac{(1+\lambda)_m(2-c+\lambda)_m}
     {(2-a+\lambda)_m(2-b+\lambda)_m}
=
\sum_{s=0}^{\infty}r_{m,s}\lambda^s,
\label{eq:Rcoefficients}
\end{align}
with \(\mathcal{R}_0(\lambda)=1\). For \(1\le j\le m\), define
\begin{align}
\mathcal{T}_{m,j}(\lambda)
&=
\eps^{m-j+1}
\frac{(j+1+\lambda)_{m-j}
      (j+2-c+\lambda)_{m-j}}
     {(j+1-a+\lambda)_{m-j+1}
      (j+1-b+\lambda)_{m-j+1}}=
\sum_{s=0}^{\infty}t_{m,j,s}\lambda^s,
\label{eq:Tcoefficients}
\end{align}
and write
\begin{equation}
E_\eps(j+\lambda;a,b,c)
=
\sum_{v=0}^{\infty}e_{\eps,j,v}(a,b,c)\lambda^v.
\label{eq:ecoefficients}
\end{equation}
For simplicity, the parameters \(a,b,c,\varepsilon\) are omitted from the notation \(r_{m,s}\) and \(t_{m,j,s}\). In the non-alternating case,
also assume
\(
\RePart(c-a-b)>-1
\) for the following two theorems.

\begin{theorem}
\label{thm:explicitpowers}
Let \(m\ge0\), \(K\ge1\). Assume that
\(c\notin\{0,-1,-2,\ldots\}\), that
\(
j+1-a\neq0,\, j+1-b\neq0,
\, 1\le j\le m.
\)
Then
\begin{align}
A_{m,\eps}^{(K)}(a,b,c)
&=
\sum_{s=0}^{K-1}
(-1)^s r_{m,s}
A_{0,\eps}^{(K-s)}(a,b,c)+
(-1)^{K-1}
\sum_{j=1}^{m}
\sum_{\substack{u,v\ge0\\u+v=K-1}}
t_{m,j,u}\,
e_{\eps,j,v}(a,b,c).
\label{eq:explicitpowers}
\end{align}
\end{theorem}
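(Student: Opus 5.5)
The plan is to take the product--sum solution of Theorem~\ref{thm:generalsolution}, rewrite its finite factors as the generating functions \(\mathcal{R}_m\) and \(\mathcal{T}_{m,j}\), and then apply the coefficient extraction \eqref{eq:generalpowerextract} at \(\lambda=0\).

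\textbf{Step 1: identify the products.} Since
\[
\prod_{\ell=1}^{m}(\ell+\lambda)=(1+\lambda)_m,
\qquad
\prod_{\ell=1}^{m}(\ell+\lambda+1-c)=(2-c+\lambda)_m,
\]
and similarly for the factors involving \(a\) and \(b\), we get
\(\prod_{\ell=1}^m\rho_\ell=\mathcal{R}_m(\lambda)\). Likewise
\[
\frac{1}{\eps(j+\lambda+1-a)(j+\lambda+1-b)}\prod_{\ell=j+1}^{m}\rho_\ell
\]
has numerator \((j+1+\lambda)_{m-j}(j+2-c+\lambda)_{m-j}\), denominator \((j+1-a+\lambda)_{m-j+1}(j+1-b+\lambda)_{m-j+1}\), and sign \(\eps^{m-j+1}\). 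This is exactly \(\mathcal{T}_{m,j}(\lambda)\). Theorem~\ref{thm:generalsolution} therefore reads
\[
\Phi_{m,\eps}(\lambda)=\mathcal{R}_m(\lambda)\,\Phi_{0,\eps}(\lambda)+\sum_{j=1}^m\mathcal{T}_{m,j}(\lambda)\,E_\eps(j+\lambda;a,b,c).
\]
The hypotheses \(j+1-a\neq0\) and \(j+1-b\neq0\) for \(1\le j\le m\) make \(\mathcal{R}_m\) and \(\mathcal{T}_{m,j}\) rational functions that are analytic at \(\lambda=0\). By continuity in \(\lambda\), the identity holds on a neighborhood of \(0\) minus finitely many points, and then at \(0\) itself. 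The function \(E_\eps(j+\lambda)\) is constant in \(\lambda\) when \(\eps=1\) and linear when \(\eps=-1\), so it is entire. This justifies the expansions \eqref{eq:Rcoefficients}--\eqref{eq:ecoefficients}.

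\textbf{Step 2: expand \(\Phi_{0,\eps}\) and \(\Phi_{m,\eps}\).} Near \(\lambda=0\) we use
\[
\Phi_{0,\eps}(\lambda)=\sum_{r\ge0}(-1)^rA_{0,\eps}^{(r+1)}\lambda^r.
\]
To justify this I would use the series \eqref{eq:generalSeries}. In the non-alternating case, the condition \(\RePart(c-a-b)>-1\) gives absolute convergence of \(\sum |(a)_n(b)_n/((c)_nn!)|/(n+1)\), uniformly for \(|\lambda|\le\tfrac12\). Hence \(\Phi_{0,\eps}\) is analytic there and equals the termwise-expanded double series. In the alternating conditionally convergent range, I would instead argue that the integral \eqref{eq:generalPhi} is analytic in \(\lambda\) for \(\RePart\lambda>-1\). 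Its Taylor coefficients are
\[
\frac{1}{K!}\int_0^1 x^{\lambda}(\log x)^K F\,dx,
\]
and termwise integration with the Abel argument already used in the paper identifies these with \((-1)^KA^{(K+1)}_{0,\eps}\). The same argument applies to \(\Phi_{m,\eps}\), which gives \eqref{eq:generalpowerextract}.

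\textbf{Step 3: compare coefficients.} We take \([\lambda^{K-1}]\) on both sides and multiply by \((-1)^{K-1}\). The left side becomes \(A^{(K)}_{m,\eps}\) by \eqref{eq:generalpowerextract}. For the first term, the Cauchy product gives
\[
(-1)^{K-1}\sum_{s=0}^{K-1}r_{m,s}(-1)^{K-1-s}A_{0,\eps}^{(K-s)}=\sum_{s=0}^{K-1}(-1)^s r_{m,s}A_{0,\eps}^{(K-s)}.
\]
The second term contributes \((-1)^{K-1}\sum_j\sum_{u+v=K-1}t_{m,j,u}e_{\eps,j,v}\). Together these yield \eqref{eq:explicitpowers}.

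\textbf{Main obstacle.} The algebra is routine. The main obstacle is the analytic justification in Step 2: showing that \(\Phi_{0,\eps}\) and \(\Phi_{m,\eps}\) are analytic at \(\lambda=0\) with Taylor coefficients \((-1)^rA^{(r+1)}\), in particular in the alternating conditionally convergent range. My first attempt would be to differentiate under the integral sign and then identify the series by the \(t\to1^-\) Abel limit, as was done for \eqref{eq:generalSeries}.
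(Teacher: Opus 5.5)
Your proposal is correct and follows the paper's proof essentially step for step. Like the paper, you rewrite Theorem~\ref{thm:generalsolution} as $\Phi_{m,\eps}=\mathcal{R}_m\Phi_{0,\eps}+\sum_{j}\mathcal{T}_{m,j}E_\eps(j+\lambda)$, expand $\Phi_{0,\eps}$ in powers of $\lambda$, take Cauchy products, and extract $[\lambda^{K-1}]$ using $(-1)^{K-1}(-1)^{K-1-s}=(-1)^s$. The only difference is that your Step~2 supplies analytic justification the paper merely asserts when it interchanges the sums: absolute convergence for $\eps=1$, and the log-moment plus Abel-limit argument in the conditionally convergent alternating range.
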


\begin{proof}
From \eqref{eq:rhoj}, using
\(\eps^{-q}=\eps^q\) for \(\eps\in\{1,-1\}\), we have
\[
\prod_{\ell=1}^{m}
\rho_\ell(\lambda;a,b,c,\eps)
=
\mathcal{R}_m(\lambda).
\]
Similarly,
\begin{align*}
&\frac{1}
{\eps(j+\lambda+1-a)(j+\lambda+1-b)}
\prod_{\ell=j+1}^{m}
\rho_\ell(\lambda;a,b,c,\eps)
=
\mathcal{T}_{m,j}(\lambda).
\end{align*}
Therefore Theorem~\ref{thm:generalsolution} becomes
\begin{align}
\Phi_{m,\eps}(\lambda;a,b,c)
&=
\mathcal{R}_m(\lambda)
\Phi_{0,\eps}(\lambda;a,b,c)+
\sum_{j=1}^{m}
\mathcal{T}_{m,j}(\lambda)
E_\eps(j+\lambda;a,b,c).
\label{eq:compactgeneralsolution}
\end{align}
And for \(m=0\), we have
\begin{equation}
\Phi_{0,\eps}(\lambda;a,b,c)
=
\sum_{q=0}^{\infty}
(-1)^qA_{0,\eps}^{(q+1)}(a,b,c)\lambda^q.
\label{eq:initialTaylor}
\end{equation}
Indeed, this follows by expanding \((n+1+\lambda)^{-1}\) in powers of
\(\lambda\) and interchanging the two sums.

The Cauchy product of \eqref{eq:Rcoefficients} and
\eqref{eq:initialTaylor} gives
\begin{align*}
\mathcal{R}_m(\lambda)\Phi_{0,\eps}(\lambda)
&=
\sum_{N=0}^{\infty}
\left(
\sum_{s=0}^{N}
r_{m,s}(-1)^{N-s}
A_{0,\eps}^{(N-s+1)}
\right)\lambda^N.
\end{align*}
Likewise,
\begin{align*}
\mathcal{T}_{m,j}(\lambda)
E_\eps(j+\lambda;a,b,c)
&=
\sum_{N=0}^{\infty}
\left(
\sum_{\substack{u,v\ge0\\u+v=N}}
t_{m,j,u}e_{\eps,j,v}
\right)\lambda^N.
\end{align*}
Taking the coefficient of \(\lambda^{K-1}\) in
\eqref{eq:compactgeneralsolution} and using
\[
A_{m,\eps}^{(K)}
=
(-1)^{K-1}[\lambda^{K-1}]\Phi_{m,\eps}(\lambda)
\]
yields \eqref{eq:explicitpowers}, because
\(
(-1)^{K-1}(-1)^{K-1-s}=(-1)^s
\).
\end{proof}

\begin{corollary}
\label{cor:explicitpowercases}
The following
specializations hold.
\begin{enumerate}[label=\textup{(\roman*)}]
\item In the non-alternating case,
\begin{align}
A_{m,1}^{(K)}(a,b,c)
&=
\sum_{s=0}^{K-1}
(-1)^s r_{m,s}
A_{0,1}^{(K-s)}(a,b,c)+
(-1)^{K-1}
\frac{\Gamma(c)\Gamma(c-a-b+1)}
     {\Gamma(c-a)\Gamma(c-b)}
\sum_{j=1}^{m}t_{m,j,K-1}.
\label{eq:explicitpowersplus}
\end{align}

\item In the alternating case, put
\(
f(a,b,c)={}_2F_1(a,b;c;-1),
\,
g(a,b,c)=
\left.
\frac{d}{dz}{}_2F_1(a,b;c;z)
\right|_{z=-1}.
\)
Then
\begin{align}
A_{m,-1}^{(K)}(a,b,c)
&=
\sum_{s=0}^{K-1}
(-1)^s r_{m,s}
A_{0,-1}^{(K-s)}(a,b,c)
\notag\\
&\quad+
(-1)^{K-1}
\sum_{j=1}^{m}
\Big[
t_{m,j,K-1}
\big\{(c+a+b-2j-2)f(a,b,c)-2g(a,b,c)\big\}
\notag\\
&\hspace{39mm}
-2f(a,b,c)t_{m,j,K-2}
\Big],
\label{eq:explicitpowersminus}
\end{align}
where \(t_{m,j,-1}=0\).
\end{enumerate}
\end{corollary}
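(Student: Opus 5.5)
The corollary follows by substituting the explicit upper-endpoint terms of Theorem~\ref{thm:generalrec} into Theorem~\ref{thm:explicitpowers}. The only work is computing the Taylor coefficients \(e_{\eps,j,v}(a,b,c)\) of \(E_\eps(j+\lambda;a,b,c)\) in \eqref{eq:ecoefficients}. The first sum in \eqref{eq:explicitpowers}, the one involving \(r_{m,s}\) and \(A_{0,\eps}^{(K-s)}\), stays unchanged in both cases. So I only need to rewrite the double sum \(\sum_{u+v=K-1}t_{m,j,u}e_{\eps,j,v}\).

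\emph{Non-alternating case.} By \eqref{eq:Eplusgeneral}, \(E_+\) does not depend on \(M\). Hence \(E_+(j+\lambda;a,b,c)\) is constant in \(\lambda\), with
\[
e_{1,j,0}=\frac{\Gamma(c)\Gamma(c-a-b+1)}{\Gamma(c-a)\Gamma(c-b)},
\qquad e_{1,j,v}=0 \text{ for } v\ge1.
\]
In the convolution only \(v=0\), \(u=K-1\) survives. The constant is independent of \(j\), so it factors out of the sum over \(j\), and this gives \eqref{eq:explicitpowersplus}.

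\emph{Alternating case.} By \eqref{eq:Eminusgeneral} with \(M=j+\lambda\),
\[
E_-(j+\lambda;a,b,c)=\bigl\{(c+a+b-2j-2)f-2g\bigr\}-2f\,\lambda,
\]
a polynomial of degree one in \(\lambda\). So
\[
e_{-1,j,0}=(c+a+b-2j-2)f-2g,\qquad e_{-1,j,1}=-2f,
\]
and all higher coefficients vanish. The convolution then reduces to \(t_{m,j,K-1}e_{-1,j,0}+t_{m,j,K-2}e_{-1,j,1}\). When \(K=1\) the second term is absent, which is handled by the convention \(t_{m,j,-1}=0\). Substituting gives \eqref{eq:explicitpowersminus}.

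There is no real obstacle here. The only points to check are that the hypotheses of Theorem~\ref{thm:explicitpowers} are inherited, that the \(\lambda\)-expansions are legitimate (in both cases \(E_\eps\) is a polynomial in \(\lambda\)), and that the sign \((-1)^{K-1}\) and the index bookkeeping for \(K=1\) come out correctly.
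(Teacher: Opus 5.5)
Your proposal is correct and follows the paper's proof essentially verbatim: you read off the Taylor coefficients of \(E_\eps(j+\lambda;a,b,c)\) (constant for \(\eps=1\); linear in \(\lambda\) with coefficients \((c+a+b-2j-2)f-2g\) and \(-2f\) for \(\eps=-1\)) and substitute them into the convolution of Theorem~\ref{thm:explicitpowers}. Your remark that the convention \(t_{m,j,-1}=0\) handles the case \(K=1\) is the same bookkeeping the paper uses.
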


\begin{proof}
For \(\eps=1\), the upper-endpoint term \eqref{eq:Eplusgeneral} is
independent of \(j+\lambda\). Hence
\[
e_{+,j,0}(a,b,c)
=
\frac{\Gamma(c)\Gamma(c-a-b+1)}
     {\Gamma(c-a)\Gamma(c-b)},
\qquad
e_{+,j,v}(a,b,c)=0\quad(v\ge1),
\]
and \eqref{eq:explicitpowersplus} follows from
\eqref{eq:explicitpowers}.

For \(\eps=-1\), equation \eqref{eq:Eminusgeneral} gives
\begin{align*}
E_-(j+\lambda;a,b,c)
&=
\big\{(c+a+b-2j-2)f(a,b,c)-2g(a,b,c)\big\}
-2f(a,b,c)\lambda.
\end{align*}
Thus only the coefficients of \(\lambda^0\) and \(\lambda^1\) are
nonzero. Substitution into \eqref{eq:explicitpowers} gives
\eqref{eq:explicitpowersminus}.
\end{proof}

\begin{example}\label{ex:noncomplementary}
Let
\[
U_m(\lambda)
=
\sum_{n=0}^{\infty}
\frac{\left(\frac{1}{2}\right)_n\left(\frac{1}{3}\right)_n}{(n!)^2}
\frac{1}{n+m+1+\lambda}.
\]
and put
\(
r_m=\frac{(1)_m^2}{(3/2)_m(5/3)_m},
\) for \(r_0=1.
\)
Then, for every \(m\ge0\),
\begin{equation}\label{eq:noncompsolution}
U_m(\lambda)
=
\frac{\Gamma(7/6)}
{\Gamma(1/2)\Gamma(2/3)}
r_m
\left(
3+\sum_{j=1}^{m}\frac{1}{j^2r_{j-1}}
\right).
\end{equation}
All higher denominator powers follow from
Corollary~\ref{cor:explicitpowercases}\textup{(i)}.
\end{example}

\begin{proof}
Apply Corollary~\ref{cor:explicitpowercases}\textup{(i)} with
\(
(a,b,c,K)=\left(\frac12,\frac13,1,1\right).
\)
The upper-endpoint term is
\(
E=
\frac{\Gamma(7/6)}
{\Gamma(1/2)\Gamma(2/3)}.
\)
From \eqref{eq:Rcoefficients} and \eqref{eq:Tcoefficients},
\[
r_{m,0}
=
\frac{(m!)^2}{(3/2)_m(5/3)_m}
=
r_m, \quad \text{and}\quad 
t_{m,j,0}
=
\frac{(j+1)_{m-j}^2}
{(j+1/2)_{m-j+1}(j+2/3)_{m-j+1}}
=
\frac{r_m}{j^2r_{j-1}}.
\]
Moreover, Gauss summation formula gives
\[
\begin{aligned}
U_0
&=
{}_2F_1\left(\frac12,\frac13;2;1\right)=
\frac{\Gamma(2)\Gamma(7/6)}
{\Gamma(3/2)\Gamma(5/3)}
=
3E.
\end{aligned}
\]
Substitution in Corollary~\ref{cor:explicitpowercases}\textup{(i)}
gives \eqref{eq:noncompsolution}.
\end{proof}
We next use the shift parameter to treat denominators of the form
\((dn+m+1)^K\).

For a complex shift \(\lambda_0\), define
\begin{equation}\label{eq:shiftedinitialpowers}
A_{0,\eps}^{(K)}(\lambda_0;a,b,c)
=
\sum_{n=0}^{\infty}
\frac{\eps^n(a)_n(b)_n}
{(c)_n n!\,(n+1+\lambda_0)^K}.
\end{equation}

Let \(m=d\ell+\rho\), where \(0\le\rho\le d-1\), and put
\begin{equation}\label{eq:lambdarho}
\lambda_\rho=\frac{\rho+1}{d}-1.
\end{equation}
Expand the functions defined in
\eqref{eq:Rcoefficients} and \eqref{eq:Tcoefficients} at
\(\lambda=\lambda_\rho\):
\begin{align}
\mathcal R_\ell(\lambda_\rho+t)
&=
\sum_{s=0}^{\infty}
r_{\ell,s}^{[\lambda_\rho]}t^s,
\label{eq:Rshiftedcoefficients}\\
\mathcal T_{\ell,j}(\lambda_\rho+t)
&=
\sum_{s=0}^{\infty}
t_{\ell,j,s}^{[\lambda_\rho]}t^s,
\qquad 1\le j\le\ell,
\label{eq:Tshiftedcoefficients}
\end{align}
and write
\begin{equation}\label{eq:Eshiftedcoefficients}
E_\eps(j+\lambda_\rho+t;a,b,c)
=
\sum_{v=0}^{\infty}
e_{\eps,j,v}^{[\lambda_\rho]}(a,b,c)t^v.
\end{equation}

\begin{theorem}
\label{thm:coefficient}
Let \(d\ge1\), \(m\ge0\), \(K\ge1\). Write \(m=d\ell+\rho\), where
\(0\le\rho\le d-1\), and let \(\lambda_\rho\) be given by
\eqref{eq:lambdarho}. Assume that
\(
j+\lambda_\rho+1-a\neq0,
\,
j+\lambda_\rho+1-b\neq0,
\, 1\le j\le\ell.
\)
Then
\begin{align}
&\sum_{n=0}^{\infty}
\frac{\eps^n(a)_n(b)_n}
{(c)_n n!\,(dn+m+1)^K}
\notag\\
&\quad=
\frac{1}{d^K}
\sum_{s=0}^{K-1}
(-1)^s
r_{\ell,s}^{[\lambda_\rho]}
A_{0,\eps}^{(K-s)}(\lambda_\rho;a,b,c)+
\frac{(-1)^{K-1}}{d^K}
\sum_{j=1}^{\ell}
\sum_{\substack{u,v\ge0\\u+v=K-1}}
t_{\ell,j,u}^{[\lambda_\rho]}
e_{\eps,j,v}^{[\lambda_\rho]}(a,b,c).
\label{eq:explicitcoefficientformula}
\end{align}
For \(d=1\), one has \(\lambda_\rho=0\), and
\eqref{eq:explicitcoefficientformula} reduces to
Theorem~\ref{thm:explicitpowers}.
\end{theorem}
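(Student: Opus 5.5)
Since \(m=d\ell+\rho\) and \(\lambda_\rho=(\rho+1)/d-1\), we have \(dn+m+1=d(n+\ell+1+\lambda_\rho)\). Hence, for small \(t\),
\[
\frac{1}{(dn+m+1)^K}=\frac{1}{d^K}\,\frac{1}{(n+\ell+1+\lambda_\rho)^K},
\qquad
\frac{1}{n+\ell+1+\lambda_\rho+t}=\sum_{r\ge0}\frac{(-1)^r t^r}{(n+\ell+1+\lambda_\rho)^{r+1}},
\]
and the left side of \eqref{eq:explicitcoefficientformula} equals \(d^{-K}(-1)^{K-1}[t^{K-1}]\Phi_{\ell,\eps}(\lambda_\rho+t;a,b,c)\). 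The plan is to apply the compact solution \eqref{eq:compactgeneralsolution} with \(m\) replaced by \(\ell\) and \(\lambda=\lambda_\rho+t\). This is legitimate because \(\RePart(\lambda_\rho+t)>-1\) for small \(|t|\): indeed \(\lambda_\rho\ge 1/d-1>-1\). The nonvanishing hypotheses \(j+\lambda_\rho+1-a\neq0\) and \(j+\lambda_\rho+1-b\neq0\) make \(\mathcal R_\ell\) and \(\mathcal T_{\ell,j}\) analytic near \(t=0\). This gives
\[
\Phi_{\ell,\eps}(\lambda_\rho+t)=\mathcal R_\ell(\lambda_\rho+t)\Phi_{0,\eps}(\lambda_\rho+t)+\sum_{j=1}^{\ell}\mathcal T_{\ell,j}(\lambda_\rho+t)E_\eps(j+\lambda_\rho+t;a,b,c).
\]

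Next I will expand \(\Phi_{0,\eps}(\lambda_\rho+t)=\sum_{n}\eps^n\frac{(a)_n(b)_n}{(c)_n n!}(n+1+\lambda_\rho+t)^{-1}\) in powers of \(t\), which gives \(\sum_q(-1)^qA^{(q+1)}_{0,\eps}(\lambda_\rho;a,b,c)t^q\) by \eqref{eq:shiftedinitialpowers}. This needs \(|t|<1+\RePart\lambda_\rho\) together with the interchange of sums. The justification is the same as for \eqref{eq:initialTaylor}: absolute convergence when \(\RePart(c-a-b)>-1\), or locally uniform convergence in the conditionally convergent alternating range, handled by the same Abel-type argument. I will combine this with the expansions \eqref{eq:Rshiftedcoefficients}, \eqref{eq:Tshiftedcoefficients} and \eqref{eq:Eshiftedcoefficients}, take Cauchy products, and read off the coefficient of \(t^{K-1}\). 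The first group of terms gives \(\sum_{s}r^{[\lambda_\rho]}_{\ell,s}(-1)^{K-1-s}A^{(K-s)}_{0,\eps}(\lambda_\rho)\). Multiplying by \((-1)^{K-1}\) produces the sign \((-1)^s\). The second group gives \(\sum_j\sum_{u+v=K-1}t^{[\lambda_\rho]}_{\ell,j,u}e^{[\lambda_\rho]}_{\eps,j,v}\) with the overall factor \((-1)^{K-1}\). Including the factor \(d^{-K}\) then yields \eqref{eq:explicitcoefficientformula}.

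The only delicate step is the analytic bookkeeping: I must check that both sides are analytic in \(t\) near \(0\) and equal there, so that comparing Taylor coefficients is valid. The same identification also justifies \(A^{(K)}=(-1)^{K-1}[t^{K-1}]\Phi\) at the shifted base point, in the same way as \eqref{eq:generalpowerextract}. For \(d=1\) we have \(\rho=0\), \(\ell=m\) and \(\lambda_0=0\), so the shifted coefficients coincide with \(r_{m,s}\), \(t_{m,j,s}\) and \(e_{\eps,j,v}\), and the formula reduces to Theorem~\ref{thm:explicitpowers}.
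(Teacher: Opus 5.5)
Your proposal is correct and follows essentially the same route as the paper. You write \(dn+m+1=d(n+\ell+1+\lambda_\rho)\), identify the series with \((-1)^{K-1}d^{-K}[t^{K-1}]\Phi_{\ell,\eps}(\lambda_\rho+t)\), substitute \(\lambda=\lambda_\rho+t\) into the product--sum solution, and read off the coefficient of \(t^{K-1}\) by Cauchy products. Your extra checks are details the paper leaves implicit: that \(\lambda_\rho\ge 1/d-1>-1\), that the nonvanishing hypotheses make \(\mathcal R_\ell\) and \(\mathcal T_{\ell,j}\) analytic at \(t=0\), and that the interchange of sums for \(\Phi_{0,\eps}\) is justified.
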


\begin{proof}
Since
\(
dn+m+1
=
d\bigl(n+\ell+1+\lambda_\rho\bigr),
\)
the series on the left of
\eqref{eq:explicitcoefficientformula} equals
\[
\frac{(-1)^{K-1}}{d^K}
[t^{K-1}]
\Phi_{\ell,\eps}(\lambda_\rho+t;a,b,c).
\]
Substitute \(\lambda=\lambda_\rho+t\) in
Theorem~\ref{thm:generalsolution}. Using
\eqref{eq:Rshiftedcoefficients}--\eqref{eq:Eshiftedcoefficients}
and
\[
\Phi_{0,\eps}(\lambda_\rho+t;a,b,c)
=
\sum_{q=0}^{\infty}
(-1)^q
A_{0,\eps}^{(q+1)}(\lambda_\rho;a,b,c)t^q,
\]
the Cauchy product gives the coefficient of \(t^{K-1}\).
Multiplication by \((-1)^{K-1}/d^K\) yields
\eqref{eq:explicitcoefficientformula}.
\end{proof}

\begin{proposition}
\label{prop:firstshiftcancellation}
Let \(K\ge2\). Then
\begin{align}
&\sum_{n=0}^{\infty}
\frac{(a)_n(b)_n}{(c)_n n!}
\left(
\frac{1}{(n+2)^K}
-
\sum_{s=0}^{K-2}
\frac{(-1)^s r_{1,s}}{(n+1)^{K-s}}
\right)
\notag\\
&\qquad=
(-1)^{K-1}
\left\{
r_{1,K-1}A_{0,1}^{(1)}(a,b,c)
+
\frac{\Gamma(c)\Gamma(c-a-b+1)}
{\Gamma(c-a)\Gamma(c-b)}
t_{1,1,K-1}
\right\}.
\label{eq:firstshiftcancellation}
\end{align}
\end{proposition}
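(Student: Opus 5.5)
This is the case \(m=1\), \(\eps=1\) of Corollary~\ref{cor:explicitpowercases}\textup{(i)}, rearranged so that the divergent-looking pieces are grouped into one convergent series. Take \(m=1\) in \eqref{eq:explicitpowersplus}. Since the sum over \(j\) has only the term \(j=1\), we get
\[
A_{1,1}^{(K)}=\sum_{s=0}^{K-1}(-1)^s r_{1,s}A_{0,1}^{(K-s)}
+(-1)^{K-1}E_+\,t_{1,1,K-1}.
\]
Next, move the terms with \(0\le s\le K-2\) to the left. The \(s=K-1\) term is \((-1)^{K-1}r_{1,K-1}A_{0,1}^{(1)}\), and it stays on the right together with the endpoint term. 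This is exactly the right-hand side of \eqref{eq:firstshiftcancellation}.

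The left-hand side is then
\[
A_{1,1}^{(K)}-\sum_{s=0}^{K-2}(-1)^s r_{1,s}A_{0,1}^{(K-s)}.
\]
Every series involved here has denominator exponent at least \(2\). Hence each converges absolutely whenever \(\RePart(c-a-b)>-1\), using the coefficient asymptotic \eqref{eq:generalCoefficientAsymptotic}, which gives \(O(n^{\RePart(a+b-c)-1-2})\). We may therefore combine them termwise into the single series shown in the statement.

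The point needing care is the term \(A_{0,1}^{(1)}={}_2F_1(a,b;c+1;1)\cdot\)(const). This is finite under \(\RePart(c+1-a-b)>0\) by Gauss, so it stays as a separate finite value on the right. We should also check the hypotheses of Corollary~\ref{cor:explicitpowercases}: we need \(2-a\neq0\) and \(2-b\neq0\) so that \(r_{1,s}\) and \(t_{1,1,s}\) are defined as Taylor coefficients.

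The main obstacle is justifying \eqref{eq:generalpowerextract} at \(m=0\) and \(m=1\). This requires local uniform convergence in \(\lambda\) of the moment series near \(\lambda=0\). Under \(\RePart(c-a-b)>0\) this is immediate. When \(-1<\RePart(c-a-b)\le0\), I would instead differentiate the integral representation \eqref{eq:generalPhi} in \(\lambda\). The integrand \(x^{m+\lambda}(\log x)^{k}F(a,b;c;x)\) is dominated near \(x=1\) by the integrable function \(|F|\), and near \(x=0\) it is integrable for \(\RePart\lambda>-1\). Termwise integration is then justified by the same argument already used for \eqref{eq:generalSeries}, since \(\int_0^1 x^{n+M}(-\log x)^{k}\,dx=k!/(n+M+1)^{k+1}\) with positive coefficients dominating after taking absolute values.

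Optionally, I would record \(r_{1,s}\) and \(t_{1,1,s}\) explicitly as a consistency check, though this is not needed for the proof. From
\[
\mathcal R_1(\lambda)=\frac{(1+\lambda)(2-c+\lambda)}{(2-a+\lambda)(2-b+\lambda)},
\qquad
\mathcal T_{1,1}(\lambda)=\frac{1}{(2-a+\lambda)(2-b+\lambda)},
\]
the coefficients follow by expanding with partial fractions.
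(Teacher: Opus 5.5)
Your proposal is correct and matches the paper's proof: specialize Corollary~\ref{cor:explicitpowercases}\textup{(i)} to $m=1$, keep the $s=K-1$ term and the endpoint term on the right, and recombine the remaining terms with $A_{1,1}^{(K)}$ through \eqref{eq:generalAK}; your remark that every exponent is at least $2$ supplies the convergence check the paper leaves implicit. Two side comments are off but are not used anywhere. First, $A_{0,1}^{(1)}={}_3F_2(a,b,1;c,2;1)$ coincides with ${}_2F_1(a,b;c+1;1)$ only when $c=1$; in general an additive term appears (compare \eqref{eq:Ireduction}), and its finiteness follows directly from the $O(n^{\RePart(a+b-c)-2})$ decay of its terms. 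Second, the interchange in $\lambda$ is no real obstacle, because $|n+m+1+\lambda|\ge(n+1)/2$ for $|\lambda|\le\frac12$ gives that same bound uniformly, so the M-test applies throughout $\RePart(c-a-b)>-1$ and the integral detour is unnecessary.
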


\begin{proof}
Set \(m=1\) in
Corollary~\ref{cor:explicitpowercases}\textup{(i)}. This gives
\begin{align*}
A_{1,1}^{(K)}
&=
\sum_{s=0}^{K-1}
(-1)^s r_{1,s}A_{0,1}^{(K-s)}
+
(-1)^{K-1}
\frac{\Gamma(c)\Gamma(c-a-b+1)}
{\Gamma(c-a)\Gamma(c-b)}
t_{1,1,K-1}.
\end{align*}
Separate the term \(s=K-1\), move the remaining terms to the
left, and use the definition \eqref{eq:generalAK}. This yields
\eqref{eq:firstshiftcancellation}.
\end{proof}
By employing the above Proposition \ref{prop:firstshiftcancellation}, one can easily derive the following identities. 
\begin{example}
The following identities hold:
\begin{align*}
&\sum_{n=0}^{\infty}
\frac1{16^n}\binom{2n}{n}^2
\left(
\frac1{(n+2)^3}
-\frac4{9}\frac1{(n+1)^3}
+\frac8{27}\frac1{(n+1)^2}
\right)
=0,\\
&\sum_{n=0}^{\infty}
\frac1{27^n}\binom{3n}{n}\binom{2n}{n}
\left(
\frac1{(n+2)^3}
-\frac9{20}\frac1{(n+1)^3}
+\frac{117}{400}\frac1{(n+1)^2}
\right)
=-\frac{729\sqrt3}{32000\pi},\\
&\sum_{n=0}^{\infty}
\frac1{432^n}\binom{6n}{3n}\binom{3n}{n}
\left(
\frac1{(n+2)^3}
-\frac{36}{77}\frac1{(n+1)^3}
+\frac{1656}{5929}\frac1{(n+1)^2}
\right)
=-\frac{373248}{2282665\pi}.
\label{eq:add-ordinary-R6-K3}
\end{align*}
\end{example}
 Similarly, taking \(\varepsilon=-1\) yields an analogous cancellation proposition for the alternating case.

\section{Product-binomial and elliptic-integral applications}\label{sec:applications}
We now present several compact closed-form evaluations arising from the
general results established in the previous section, with particular
emphasis on rational arguments. The corresponding generating functions
can also be expressed in terms of complete elliptic integrals. In the
modulus convention,
\[
K(k)=\int_0^{\pi/2}
\frac{d\theta}{\sqrt{1-k^2\sin^2\theta}},
\]
and
\begin{equation}\label{eq:ellipticGF}
\begin{aligned}
{}_2F_1\left(\frac12,\frac12;1;x^2\right)
&=\frac{2}{\pi}K(x),\quad 
{}_2F_1\left(\frac14,\frac34;1;x^2\right)
&=\frac{2}{\pi\sqrt{1+x}}
K\left(\sqrt{\frac{2x}{1+x}}\right),
\qquad 0\le x<1.
\end{aligned}
\end{equation}
The second identity is the quadratic transformation used in
\cite[p.~99]{CampbellDAS}. Consequently, the sums considered below may
be interpreted both as product-binomial series and as moments of
complete elliptic integrals.

The integral representations in this section follow by termwise
integration of the corresponding hypergeometric generating functions.
For the finite evaluations with denominator \(n+m+1\), we specialize
Corollary~\ref{cor:explicitpowercases} to \(K=1\). The families with
denominator \(2n+m+1\) follow from Theorem~\ref{thm:coefficient} with
\(d=2\) and \(K=1\), after separating the even and odd values of \(m\).
Thus, the remaining work consists of evaluating the required initial
values and simplifying the resulting Pochhammer products into binomial
coefficients.

We begin with the product-binomial series that motivated the present work.

\begin{theorem}\label{thm:Rfirst}
For every \(m\ge0\),
\begin{align}\label{eq:R4open}
&\sum_{n=0}^{\infty}
\frac{1}{64^n}\binom{2n}{n}\binom{4n}{2n}
\frac{1}{n+m+1}=
\frac{16\sqrt2\,64^m}
{\pi(2m+1)(4m+3)
\binom{2m}{m}\binom{4m+2}{2m+1}}
\sum_{j=0}^{m}
\frac{1}{64^j}\binom{2j}{j}\binom{4j}{2j}.
\end{align}
Formula \eqref{eq:R4open} gives a recurrence-based answer to the
question raised in \cite[pp.~16--17]{Bhandari2022} and by Stewart
\cite[Eq.~(17)]{Stewart2022}. A different finite evaluation of the same
series was previously obtained by Kouba by taking \(k=1\) in
\cite[Corollary~4.2, Eq.~(4.6)]{Kouba2024}.
\end{theorem}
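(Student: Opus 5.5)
We specialize Theorem~\ref{thm:generalrec} to \((a,b,c)=(\tfrac14,\tfrac34,1)\), \(\eps=1\), \(\lambda=0\). By \eqref{eq:binomialexamples}, the left side of \eqref{eq:R4open} is \(\Phi_{m,1}(0;\tfrac14,\tfrac34,1)\); write it as \(\Phi_m\) and put \(a_m=a_m(\tfrac14)\). Here \(c-a-b=0\), so the hypotheses hold. The plan is to show that the right-hand side of \eqref{eq:R4open} satisfies the same first-order recurrence and the same initial value. This is equivalent to evaluating the product--sum of Theorem~\ref{thm:generalsolution}, but checking the recurrence directly avoids manipulating the products \(\rho_\ell\).

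\emph{Endpoint term and recurrence.} By \eqref{eq:Eplusgeneral} and the reflection formula \(\Gamma(\tfrac14)\Gamma(\tfrac34)=\pi\sqrt2\),
\[
E:=E_+=\frac{1}{\Gamma(3/4)\Gamma(1/4)}=\frac{\sqrt2}{2\pi}.
\]
For \(m\ge1\), \eqref{eq:generalrec} then reads
\[
(m+\tfrac34)(m+\tfrac14)\Phi_m=m^2\Phi_{m-1}+E.
\]

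\emph{Initial value.} Gauss's formula gives
\[
\Phi_0={}_2F_1(\tfrac14,\tfrac34;2;1)=\frac{\Gamma(2)\Gamma(1)}{\Gamma(7/4)\Gamma(5/4)}=\frac{16}{3}E=\frac{8\sqrt2}{3\pi}.
\]
This agrees with the right side of \eqref{eq:R4open} at \(m=0\), which is \(16\sqrt2/(\pi\cdot 1\cdot 3\cdot 1\cdot 2)\).

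\emph{Rewriting the prefactor.} Let \(S_m=\sum_{j=0}^m a_j\) and let \(C_m\) denote the prefactor in \eqref{eq:R4open}. The key algebraic step is the identity
\[
C_m=\frac{E}{(m+1)^2a_{m+1}}.
\]
To prove it, express
\[
a_{m+1}=\frac{1}{64^{m+1}}\binom{2m+2}{m+1}\binom{4m+4}{2m+2}
\]
in terms of \(\binom{2m}{m}\binom{4m+2}{2m+1}\), using
\[
\binom{2m+2}{m+1}=\frac{2(2m+1)}{m+1}\binom{2m}{m},
\qquad
\binom{4m+4}{2m+2}=\frac{2(4m+3)}{m+1}\binom{4m+2}{2m+1}.
\]
This gives
\[
(m+1)^2a_{m+1}=\frac{4(2m+1)(4m+3)}{64^{m+1}}\binom{2m}{m}\binom{4m+2}{2m+1},
\]
and hence
\[
\frac{E}{(m+1)^2a_{m+1}}=\frac{\sqrt2}{2\pi}\cdot\frac{16\cdot 64^m}{(2m+1)(4m+3)\binom{2m}{m}\binom{4m+2}{2m+1}}=C_m.
\]
I expect this bookkeeping with binomial coefficients and powers of \(64\) to be the only real obstacle. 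The \(m=0\) check above (\(C_0=E/a_1=16E/3\)) fixes the constant.

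\emph{Verifying the recurrence.} Since \(a_{m+1}=a_m(m+\tfrac14)(m+\tfrac34)/(m+1)^2\), we get
\[
(m+\tfrac14)(m+\tfrac34)C_m=\frac{E}{a_m},
\qquad
m^2C_{m-1}=\frac{E}{m^2a_m}\,m^2=\frac{E}{a_m}.
\]
Therefore \(\Psi_m:=C_mS_m\) satisfies
\[
(m+\tfrac14)(m+\tfrac34)\Psi_m-m^2\Psi_{m-1}=\frac{E}{a_m}(S_m-S_{m-1})=E,
\]
which is exactly the recurrence for \(\Phi_m\). The factors \(m+\tfrac14\) and \(m+\tfrac34\) never vanish for \(m\ge1\), so a first-order recurrence together with \(\Psi_0=\Phi_0\) determines the sequence. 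By induction on \(m\), \(\Phi_m=\Psi_m\) for all \(m\ge0\), which is \eqref{eq:R4open}.
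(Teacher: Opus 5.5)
Your argument has the same structure as the paper's proof and is correct, apart from two arithmetic slips that cancel. The paper inserts the data into the product--sum of Corollary~\ref{cor:explicitpowercases}(i), computing \(r_{m,0}\) and \(t_{m,j,0}=\frac{16}{3}r_{m,0}a_j\). Your key identity \(C_m=E/((m+1)^2a_{m+1})\) is the same fact, since it says \(C_m=\frac{16}{3}E\,r_{m,0}\). The only difference is that you use it to verify the recurrence by induction rather than to sum it.

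The displayed derivation of that identity, however, contains the two slips. First, the binomial ratio is wrong by a factor of \(2\):
\[
\binom{4m+4}{2m+2}=\frac{(4m+4)(4m+3)}{(2m+2)^2}\binom{4m+2}{2m+1}=\frac{4m+3}{m+1}\binom{4m+2}{2m+1},
\]
not \(\frac{2(4m+3)}{m+1}\binom{4m+2}{2m+1}\); at \(m=0\) this reads \(\binom42=6=3\binom21\). Hence
\[
(m+1)^2a_{m+1}=\frac{2(2m+1)(4m+3)}{64^{m+1}}\binom{2m}{m}\binom{4m+2}{2m+1}.
\]
Your version would give \(a_1=3/8\) instead of \(3/16\), which contradicts your own check \(C_0=E/a_1=16E/3\). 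Second, in your last line, \(\frac{\sqrt2}{2\pi}\cdot\frac{16\cdot64^m}{(2m+1)(4m+3)\binom{2m}{m}\binom{4m+2}{2m+1}}\) equals \(C_m/2\), not \(C_m\).

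With the corrected factor,
\[
\frac{E}{(m+1)^2a_{m+1}}=\frac{\sqrt2}{2\pi}\cdot\frac{32\cdot64^m}{(2m+1)(4m+3)\binom{2m}{m}\binom{4m+2}{2m+1}}=C_m,
\]
so the identity you need is true. The rest of your proof goes through unchanged: the two consequences of \(a_{m+1}/a_m=(m+\frac14)(m+\frac34)/(m+1)^2\), the telescoping to \(E\), the initial value \(\Phi_0=8\sqrt2/(3\pi)\), and uniqueness for the first-order recurrence with nonvanishing leading coefficient.
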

\begin{proof}
Apply Corollary~\ref{cor:explicitpowercases}\textup{(i)} with
\(
(a,b,c,K)=\left(\frac14,\frac34,1,1\right).
\)
The upper-endpoint term appearing in \eqref{eq:explicitpowersplus} is
\[
E_+\left(\frac14,\frac34,1\right)
=
\frac{1}{\Gamma(1/4)\Gamma(3/4)}
=
\frac{\sqrt2}{2\pi}.
\]
The corresponding initial value is
\(
A_{0,1}^{(1)}\left(\frac14,\frac34,1\right)
=
\Phi_{0,1}\left(0;\frac14,\frac34,1\right)
=
\frac{8\sqrt2}{3\pi}.
\)
The coefficient \(r_{m,0}\) in
Corollary~\ref{cor:explicitpowercases}\textup{(i)} becomes
\[
r_{m,0}
=
\frac{(m!)^2}{(5/4)_m(7/4)_m}
=
\frac{16^m(m!)^2}
{\prod_{r=1}^{m}(4r+1)(4r+3)}
=
\frac{3\cdot2^{6m+1}}{(2m+1)(4m+3)
\binom{2m}{m}\binom{4m+2}{2m+1}}.
\]
Here we use
\(
\prod_{r=1}^{m}(4r+1)(4r+3)
=
\frac{(4m+3)!}{3\,2^{2m+1}(2m+1)!}
\). Also, for \(j\ge1\), the coefficient \(t_{m,j,0}\) satisfies
\[
t_{m,j,0}
=
\frac{16}{3}r_{m,0}
\frac{1}{64^j}\binom{2j}{j}\binom{4j}{2j}.
\]
Therefore, \eqref{eq:explicitpowersplus} gives
\[
A_{m,1}^{(1)}
=
\frac{8\sqrt2}{3\pi}r_{m,0}
\sum_{j=0}^{m}
\frac{1}{64^j}\binom{2j}{j}\binom{4j}{2j}.
\]
Combining these identities yields \eqref{eq:R4open}.
\end{proof}
Comparing \eqref{eq:R4open} with Kouba's alternative evaluation
also gives a finite binomial identity; see
Corollary~\ref{cor:finitequartic} below.

We now turn to the family with denominator \(2n+m+1\), using the
second identity in \eqref{eq:ellipticGF}.

\begin{theorem}\label{thm:83}
For every \(m\ge0\),
\begin{align}\label{eq:83integral}
&\sum_{n=0}^{\infty}
\frac{1}{64^n}\binom{2n}{n}\binom{4n}{2n}
\frac{1}{2n+m+1}
=
\frac{2}{\pi}\int_0^1
\frac{x^m}{\sqrt{1+x}}
K\left(\sqrt{\frac{2x}{1+x}}\right)\,dx.
\end{align}
If \(m=2\ell+1\), the series is one half of the series in
Theorem~\ref{thm:Rfirst} with \(m=\ell\). If \(m=2\ell\), then
\begingroup\footnotesize
\begin{align}\label{eq:83even}
&\sum_{n=0}^{\infty}
\frac{1}{64^n}\binom{2n}{n}\binom{4n}{2n}
\frac{1}{2n+2\ell+1}=
\frac{1}{\pi}
\frac{4^\ell\binom{2\ell}{\ell}}
{(4\ell+1)\binom{4\ell}{2\ell}}
\left(
4\log(1+\sqrt2)
+\sqrt2\sum_{j=1}^{\ell}
\frac{(4j-3)\binom{4j-4}{2j-2}}
{(2j-1)^2 4^{j-1}\binom{2j-2}{j-1}}
\right).
\end{align}
\endgroup
Here \(\ell\) is a non-negative integer.
\end{theorem}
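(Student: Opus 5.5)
The integral representation \eqref{eq:83integral} comes from the second identity in \eqref{eq:ellipticGF}. Write \(a_n=a_n(\tfrac14)=64^{-n}\binom{2n}{n}\binom{4n}{2n}\). Since \(c-a-b=0\) for \((a,b,c)=(\tfrac14,\tfrac34,1)\), the series \(\sum_n a_nx^{2n}\) may be integrated term by term against \(x^m\) on \([0,1]\) (monotone convergence, nonnegative terms). This gives \(\sum_n a_n/(2n+m+1)=\int_0^1x^m{}_2F_1(\tfrac14,\tfrac34;1;x^2)\,dx\), and substituting the elliptic form yields \eqref{eq:83integral}. For \(m=2\ell+1\) we have \(2n+m+1=2(n+\ell+1)\), so the series is \(\tfrac12\) times the series of Theorem~\ref{thm:Rfirst} with \(m=\ell\).

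For \(m=2\ell\) I will apply Theorem~\ref{thm:coefficient} with \(d=2\), \(\rho=0\), \(\lambda_0=-\tfrac12\), \(K=1\). Equivalently, I use Theorem~\ref{thm:generalsolution} at \(\lambda=-\tfrac12\), which is legitimate since \(\RePart\lambda>-1\). The left side is \(\tfrac12\Phi_{\ell,1}(-\tfrac12)\). The recurrence data are
\[
\rho_j=\frac{(j-\tfrac12)^2}{(j+\tfrac14)(j-\tfrac14)},\qquad E_+=\frac{1}{\Gamma(\tfrac14)\Gamma(\tfrac34)}=\frac{\sqrt2}{2\pi}.
\]
The denominators \((j\pm\tfrac14)\) never vanish, so the hypothesis of the theorem holds. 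Then
\[
\Phi_{\ell,1}(-\tfrac12)=P_\ell\,\Phi_{0,1}(-\tfrac12)+E_+\sum_{j=1}^\ell\frac{P_\ell}{(j+\tfrac14)(j-\tfrac14)P_j},\qquad P_\ell=\frac{(\tfrac12)_\ell^2}{(\tfrac34)_\ell(\tfrac54)_\ell}.
\]
I will then convert the Pochhammer products into binomials. Since \((\tfrac12)_\ell=\binom{2\ell}{\ell}\ell!/4^\ell\) and \((\tfrac34)_\ell(\tfrac54)_\ell=(4\ell+1)!/(64^\ell\,(2\ell)!)\), one gets
\[
P_\ell=\frac{4^\ell\binom{2\ell}{\ell}}{(4\ell+1)\binom{4\ell}{2\ell}}.
\]
Similarly \((j+\tfrac14)(j-\tfrac14)P_j\) simplifies to a multiple of \((2j-1)^2\binom{2j-2}{j-1}4^{j-1}/((4j-3)\binom{4j-4}{2j-2})\), up to a constant factor that I will fix by checking \(\ell=1\). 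This produces the sum in \eqref{eq:83even}, with the prefactor \(\sqrt2/\pi\) coming from \(E_+\) and the overall \(\tfrac12\).

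The main obstacle is the initial value \(\Phi_{0,1}(-\tfrac12)=2\sum_n a_n/(2n+1)=2\int_0^1{}_2F_1(\tfrac14,\tfrac34;1;x^2)\,dx\), which must equal \(\tfrac{8}{\pi}\log(1+\sqrt2)\). My plan is to use the elliptic form, insert \(K(k)=\int_0^{\pi/2}(1-k^2\sin^2\theta)^{-1/2}d\theta\), and swap the integrals by Tonelli. The inner integral is then elementary:
\[
\int_0^1\frac{dx}{\sqrt{1+x\cos2\theta}}=\frac{2(\sqrt2\cos\theta-1)}{\cos2\theta}.
\]
It remains to compute \(\tfrac{2}{\pi}\int_0^{\pi/2}2(\sqrt2\cos\theta-1)/\cos2\theta\,d\theta\). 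The singularity at \(\theta=\pi/4\) is removable, since numerator and denominator vanish together. Factor \(\cos2\theta=(\sqrt2\cos\theta-1)(\sqrt2\cos\theta+1)\), so the integrand becomes \(2/(\sqrt2\cos\theta+1)\). The half-angle substitution \(u=\tan(\theta/2)\) should then give a logarithm, and I expect \(\tfrac{4}{\pi}\log(1+\sqrt2)\) for \(\sum a_n/(2n+1)\). I will check this numerically against the \(\ell=0\) case of \eqref{eq:83even}, which reads \(\tfrac{4}{\pi}\log(1+\sqrt2)\). Combining the initial value with \(P_\ell\) and the sum gives \eqref{eq:83even}.
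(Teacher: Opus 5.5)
Your proposal is correct and follows the paper's proof step for step: termwise integration, the odd case from Theorem~\ref{thm:Rfirst}, and Theorem~\ref{thm:coefficient} at $\lambda=-\tfrac12$ with the same $\rho_j$, $E_+=\sqrt2/(2\pi)$ and product identities. The constant you planned to fix by checking $\ell=1$ is exactly $4$, namely $\frac{1}{(j+\frac14)(j-\frac14)P_j}=4\cdot\frac{(4j-3)\binom{4j-4}{2j-2}}{(2j-1)^2 4^{j-1}\binom{2j-2}{j-1}}$, and direct simplification confirms the $j$-dependence matches. The only difference is the initial value $\Phi_{0,1}(-\tfrac12;\tfrac14,\tfrac34,1)=\frac{8}{\pi}\log(1+\sqrt2)$: you derive it directly, and your computation does close, since $\int_0^{\pi/2}\frac{d\theta}{1+\sqrt2\cos\theta}=\log(1+\sqrt2)$. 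The paper instead simply cites \cite[Example~22]{Bhandari2022}, so your version is self-contained, in the same spirit as the paper's own Catalan-constant evaluation \eqref{eq:initial84}.
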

\begin{proof}
By \eqref{eq:binomialexamples}, we have
\[
{}_2F_1(1/4,3/4;1;x^2)
=
\sum_{n=0}^{\infty}
\frac{1}{64^n}\binom{2n}{n}\binom{4n}{2n}x^{2n}.
\]
All terms are non-negative for \(0\le x<1\), and the integrated series converges absolutely. Therefore
\begin{align*}
\sum_{n=0}^{\infty}
\frac{1}{64^n}\binom{2n}{n}\binom{4n}{2n}
\frac{1}{2n+m+1}
&=\int_0^1x^m{}_2F_1(1/4,3/4;1;x^2)\,dx.
\end{align*}
Substitution of \eqref{eq:ellipticGF} proves the integral equality.
For \(m=2\ell+1\), the assertion follows immediately from Theorem~\ref{thm:Rfirst}.

If \(m=2\ell\), apply Theorem~\ref{thm:coefficient} with
\((a,b,c,d,\eps,K)=(1/4,3/4,1,2,1,1)\). Then \(\lambda=-1/2\), and
\begin{equation}\label{eq:evenPhi83}
\sum_{n=0}^{\infty}
\frac{1}{64^n}\binom{2n}{n}\binom{4n}{2n}
\frac{1}{2n+2\ell+1}
=\frac12\Phi_{\ell,1}(-1/2;1/4,3/4,1).
\end{equation}
The required initial value is
\begin{equation}\label{eq:initial83half}
\Phi_{0,1}(-1/2;1/4,3/4,1)
=
\frac{8}{\pi}\log(1+\sqrt2),
\end{equation}
proved in \cite[Example~22]{Bhandari2022}. Since
\(
(3/4)_\ell(5/4)_\ell
=16^{-\ell}\prod_{r=1}^{\ell}(4r-1)(4r+1),
\)
the specialization of Theorem~\ref{thm:coefficient} gives
\begin{align}\label{eq:evenraw83}
\frac12\Phi_{\ell,1}(-1/2;1/4,3/4,1)
&=
\frac12
\frac{(1/2)_\ell^2}{(3/4)_\ell(5/4)_\ell}
\left(
\frac{8}{\pi}\log(1+\sqrt2)
\right.\left.
+
\frac{\sqrt2}{2\pi}
\sum_{j=1}^{\ell}
\frac{(3/4)_{j-1}(5/4)_{j-1}}{(1/2)_j^2}
\right).
\end{align}
The corresponding product identities are
\begin{equation}\label{eq:evenproduct83}
\frac{(1/2)_\ell^2}{(3/4)_\ell(5/4)_\ell}
=
\frac{4^\ell\binom{2\ell}{\ell}}
{(4\ell+1)\binom{4\ell}{2\ell}},
\end{equation}
and
\begin{equation}\label{eq:eventerm83}
\frac{(3/4)_{j-1}(5/4)_{j-1}}{4(1/2)_j^2}
=
\frac{(4j-3)\binom{4j-4}{2j-2}}
{(2j-1)^2 4^{j-1}\binom{2j-2}{j-1}}.
\end{equation}
Indeed,
\(
\prod_{r=1}^{\ell}(4r-1)(4r+1)=(4\ell+1)!!
\) 
and
\(
(1/2)_\ell=\frac{(2\ell)!}{4^\ell\ell!}.
\)
These formulas give \eqref{eq:evenproduct83}; replacing \(\ell\) by \(j-1\) and simplifying gives \eqref{eq:eventerm83}. Substitution in \eqref{eq:evenraw83} proves \eqref{eq:83even}.
\end{proof}

\begin{remark}
The work \cite[p.~99]{CampbellDAS} gives the generating function \eqref{eq:ellipticGF}, records several first values, and explains that the integer and half-integer moments can be evaluated. Theorem~\ref{thm:83} gives the even-shift formula, while the odd shifts follow directly from Theorem~\ref{thm:Rfirst}.
\end{remark}

\begin{theorem}\label{cor:84}
Let
\(
G=\sum_{j=0}^{\infty}\frac{(-1)^j}{(2j+1)^2}
\)
be Catalan's constant. For every \(m\ge0\),
\begin{align}
&\sum_{n=0}^{\infty}
\frac{1}{16^n}\binom{2n}{n}^2
\frac{1}{2n+m+1}
=
\frac{2}{\pi}\int_0^1x^mK(x)\,dx
=
\frac{1}{\pi}
\begin{cases}
\displaystyle
\frac{2\,16^\ell}
{(2\ell+1)^2\binom{2\ell}{\ell}^2}
\sum_{j=0}^{\ell}\frac{\binom{2j}{j}^2}{16^j},
& m=2\ell+1,\\[15pt]
\displaystyle
\frac{\binom{2\ell}{\ell}^2}{16^\ell}
\left(
4G+\frac12\sum_{j=1}^{\ell}
\frac{16^j}{j^2\binom{2j}{j}^2}
\right),
& m=2\ell.
\end{cases}
\label{eq:84}
\end{align}
\end{theorem}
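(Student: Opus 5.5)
The plan is to follow the template of Theorems~\ref{thm:Rfirst} and \ref{thm:83}, now with \((a,b,c)=(\tfrac12,\tfrac12,1)\). By \eqref{eq:binomialexamples} we have \(a_n(\tfrac12)=16^{-n}\binom{2n}{n}^2\), and
\[
{}_2F_1(\tfrac12,\tfrac12;1;x^2)=\sum_{n\ge0}16^{-n}\binom{2n}{n}^2x^{2n}=\tfrac{2}{\pi}K(x).
\]
The terms are non-negative and the integrated series converges, since \(a_n\asymp 1/(\pi n)\). Termwise integration of \(x^m\) times this series therefore gives the integral representation. Since \(2n+m+1=2(n+\ell+1+\lambda_\rho)\), the series equals \(\tfrac12\Phi_{\ell,1}(\lambda_\rho;\tfrac12,\tfrac12,1)\), with \(\lambda_\rho=0\) for \(m=2\ell+1\) and \(\lambda_\rho=-\tfrac12\) for \(m=2\ell\). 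This is Theorem~\ref{thm:coefficient} with \(d=2\), \(K=1\). In both cases the endpoint term is
\[
E_+=\frac{\Gamma(1)\Gamma(1)}{\Gamma(1/2)^2}=\frac1\pi .
\]
We then use the product--sum solution of Theorem~\ref{thm:generalsolution}.

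\emph{Odd case.} Here \(\lambda=0\). The initial value comes from Gauss's formula:
\[
\Phi_{0,1}(0)={}_2F_1(\tfrac12,\tfrac12;2;1)=\frac{\Gamma(2)\Gamma(1)}{\Gamma(3/2)^2}=\frac4\pi .
\]
The homogeneous product is
\[
r_\ell:=\frac{(\ell!)^2}{(3/2)_\ell^2}=\frac{16^\ell}{(2\ell+1)^2\binom{2\ell}{\ell}^2},
\]
using \((3/2)_\ell=(2\ell+1)!/(4^\ell\ell!)\). As in Example~\ref{ex:noncomplementary}, the forcing coefficients are \(t_{\ell,j,0}=r_\ell/(j^2r_{j-1})\). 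The identity \(\binom{2j}{j}=\tfrac{2(2j-1)}{j}\binom{2j-2}{j-1}\) then gives
\[
\frac{1}{j^2r_{j-1}}=\frac{4\binom{2j}{j}^2}{16^j}.
\]
Hence
\[
\Phi_{\ell,1}(0)=\frac4\pi\, r_\ell\sum_{j=0}^{\ell}\frac{\binom{2j}{j}^2}{16^j},
\]
because the \(j=0\) term \(1\) accounts for the initial value \(4/\pi\). Halving this yields the first case of \eqref{eq:84}.

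\emph{Even case.} Here \(\lambda=-\tfrac12\), and \eqref{eq:rhoj} becomes \(\rho_j=(j-\tfrac12)^2/j^2\). Thus \(\prod_{l=1}^{\ell}\rho_l=R_\ell\), where
\[
R_\ell:=\frac{(1/2)_\ell^2}{(\ell!)^2}=\frac{\binom{2\ell}{\ell}^2}{16^\ell}.
\]
The forcing term becomes \(\frac{1}{\pi j^2}\,R_\ell/R_j\). So
\[
\Phi_{\ell,1}(-\tfrac12)=R_\ell\Bigl(\Phi_{0,1}(-\tfrac12)+\frac1\pi\sum_{j=1}^{\ell}\frac{16^j}{j^2\binom{2j}{j}^2}\Bigr).
\]
Halving this matches the stated formula provided \(\Phi_{0,1}(-\tfrac12)=8G/\pi\).

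The only genuinely new ingredient, and the main obstacle, is this initial value. By the integral representation with \(m=0\),
\[
\Phi_{0,1}(-\tfrac12)=2\sum_n \frac{a_n}{2n+1}=\frac{4}{\pi}\int_0^1K(x)\,dx,
\]
so it suffices to prove \(\int_0^1K(x)\,dx=2G\). I would do this by writing \(K\) as its \(\theta\)-integral and integrating in \(x\) first, using \(\int_0^1 \frac{dx}{\sqrt{1-x^2\sin^2\theta}}=\theta/\sin\theta\). This reduces the claim to
\[
\int_0^{\pi/2}\frac{\theta}{\sin\theta}\,d\theta=2G,
\]
a classical evaluation: substitute \(t=\tan(\theta/2)\) to get \(4\int_0^1\frac{\arctan t}{t}\,dt\), and expand the integrand termwise. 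Alternatively, \(2G\) is directly the series \(\sum_n\binom{2n}{n}^2/(16^n(2n+1))\).
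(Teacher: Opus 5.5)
Your proposal follows the paper's proof essentially step for step. The shared steps are: the \(d=2\) residue-class reduction to \(\tfrac12\Phi_{\ell,1}(0)\) and \(\tfrac12\Phi_{\ell,1}(-\tfrac12)\), the endpoint value \(E_+=1/\pi\), the same Pochhammer-to-binomial product identities, and the same Fubini reduction of the even-case initial value to \(\int_0^{\pi/2}\theta/\sin\theta\,d\theta\). You obtain \(\Phi_{0,1}(0)=4/\pi\) from Gauss's formula rather than from the recurrence at \(M=0\), which makes no difference.

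Two slips in your last paragraph need fixing. First, the substitution \(t=\tan(\theta/2)\) gives \(\frac{\theta}{\sin\theta}\,d\theta=\frac{2\arctan t}{t}\,dt\). So the integral equals \(2\int_0^1\frac{\arctan t}{t}\,dt=2G\), not \(4\int_0^1\frac{\arctan t}{t}\,dt\); the latter would give \(4G\) and contradict your target. Second, your closing ``alternative'' is circular, because that series is the \(m=0\) case of the statement being proved. It is also wrong in value: \(\sum_n\binom{2n}{n}^2/(16^n(2n+1))=\frac{2}{\pi}\int_0^1K(x)\,dx=4G/\pi\), and it is \(\int_0^1K(x)\,dx\), not the series, that equals \(2G\). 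With the coefficient corrected, the argument is complete and agrees with the paper's.
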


\begin{proof}
By \eqref{eq:binomialexamples} and the first identity in
\eqref{eq:ellipticGF},
\[
{}_2F_1(1/2,1/2;1;x^2)
=
\sum_{n=0}^{\infty}
\frac{1}{16^n}\binom{2n}{n}^2x^{2n}
=
\frac{2}{\pi}K(x).
\]
Termwise integration proves the integral equality in \eqref{eq:84}.
For the finite evaluation, apply Theorem~\ref{thm:coefficient} with \((a,b,c,d,\eps,K)=(1/2,1/2,1,2,1,1)\). Since \(c=1\), equation \eqref{eq:generalrec} remains valid at \(M=0\); together with \eqref{eq:Eplusgeneral}, it gives
\[
\Phi_{0,1}(0;1/2,1/2,1)=\frac4\pi.
\]
The remaining initial value is \(\frac12\Phi_{0,1}(-1/2;1/2,1/2,1)\), which can be evaluated as 
\begin{equation}\label{eq:initial84}
\int_0^1{}_2F_1(1/2,1/2;1;t^2)\,dt=\frac2\pi\int_0^{\pi/2}
\left(\int_0^1\frac{dt}{\sqrt{1-t^2\sin^2\theta}}\right)d\theta =\frac2\pi\int_0^{\pi/2}\frac{\theta}{\sin\theta}\,d\theta=\frac{4G}{\pi},
\end{equation}
see also \cite[p.~818]{Adamchik}. 
For the odd class, the required identities are
\begin{equation}\label{eq:oddproducts84}
\frac{(\ell!)^2}{(3/2)_\ell^2}
=
\frac{16^\ell}{(2\ell+1)^2\binom{2\ell}{\ell}^2},
\qquad
\frac{(3/2)_{j-1}^2}{4(j!)^2}
=
\frac{\binom{2j}{j}^2}{16^j}.
\end{equation}
For the even class, the required identities are
\begin{equation}\label{eq:evenproducts84}
\frac{(1/2)_\ell^2}{(\ell!)^2}
=
\frac{\binom{2\ell}{\ell}^2}{16^\ell},
\qquad
\frac{((j-1)!)^2}{(1/2)_j^2}
=
\frac{16^j}{j^2\binom{2j}{j}^2}.
\end{equation}
Substitution of \eqref{eq:initial84}--\eqref{eq:evenproducts84} in Theorem~\ref{thm:generalsolution} gives \eqref{eq:84}.
\end{proof}

\begin{remark}
Since
\[
\sum_{n=0}^{\infty}
\frac{1}{16^n}\binom{2n}{n}^2
\frac{1}{2n+m+1}
=
\frac12
\sum_{n=0}^{\infty}
\frac{1}{16^n}\binom{2n}{n}^2
\frac{1}{n+(m+1)/2},
\]
the odd and even cases of \eqref{eq:84} recover, respectively, the
positive-integer and positive-half-integer evaluations discussed in
\cite{Adamchik}.
\end{remark}

\begin{lemma}\label{lem:f2g2}
We have
\begin{equation}\label{eq:f2g2}
{}_2F_1(1/2,1/2;1;-1)
=
\frac{1}{\Gamma^2(3/4)}\sqrt{\frac{\pi}{2}},
\end{equation}
and
\begin{equation}\label{eq:g2direct}
\left.\frac{d}{dz}{}_2F_1(1/2,1/2;1;z)\right|_{z=-1}
=
\frac{\Gamma^2(1/4)-4\Gamma^2(3/4)}
{8\sqrt2\,\pi^{3/2}}.
\end{equation}
\end{lemma}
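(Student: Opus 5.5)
For \eqref{eq:f2g2} I will use Pfaff's transformation
\[
F(a,b;c;z)=(1-z)^{-a}F\bigl(a,c-b;c;z/(z-1)\bigr)
\]
at \(z=-1\). This gives
\[
{}_2F_1(1/2,1/2;1;-1)=2^{-1/2}\,{}_2F_1(1/2,1/2;1;1/2).
\]
The value at \(1/2\) follows from Gauss's second summation theorem:
\[
F\bigl(a,b;\tfrac{a+b+1}{2};\tfrac12\bigr)
=\frac{\sqrt\pi\,\Gamma(\frac{a+b+1}{2})}{\Gamma(\frac{a+1}{2})\Gamma(\frac{b+1}{2})}.
\]
With \(a=b=1/2\) this equals \(\sqrt\pi/\Gamma^2(3/4)\), so the product is \(\sqrt{\pi/2}/\Gamma^2(3/4)\), which is \eqref{eq:f2g2}. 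Equivalently, \((2/\pi)K(1/\sqrt2)\) with the classical value of \(K(1/\sqrt2)\) gives the same result.

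For \eqref{eq:g2direct} I will differentiate the Pfaff representation. Put \(w=z/(z-1)\), so that
\[
\frac{dw}{dz}=-\frac{1}{(z-1)^2},
\]
which equals \(-1/4\) at \(z=-1\). Then
\[
\frac{d}{dz}\Bigl[(1-z)^{-1/2}F(\tfrac12,\tfrac12;1;w)\Bigr]
=\tfrac12(1-z)^{-3/2}F+(1-z)^{-1/2}F'(w)\frac{dw}{dz}.
\]
At \(z=-1\) this becomes
\[
2^{-5/2}F(\tfrac12,\tfrac12;1;\tfrac12)-2^{-5/2}F'(\tfrac12,\tfrac12;1;\tfrac12).
\]
Since \(F'(a,b;c;x)=\frac{ab}{c}F(a+1,b+1;c+1;x)\), the remaining task is to evaluate
\[
\tfrac14\,{}_2F_1(3/2,3/2;2;1/2).
\]

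This is the main obstacle, because \((3/2,3/2;2)\) is not in Gauss's second-theorem form (that would need \(c=2\) to equal \((a+b+1)/2=2\), which does hold). So Gauss's second theorem applies directly:
\[
{}_2F_1(3/2,3/2;2;1/2)=\frac{\sqrt\pi\,\Gamma(2)}{\Gamma^2(5/4)}=\frac{16\sqrt\pi}{\Gamma^2(1/4)}.
\]
By the reflection formula, \(\Gamma(1/4)\Gamma(3/4)=\pi\sqrt2\), so
\[
\frac{1}{\Gamma^2(1/4)}=\frac{\Gamma^2(3/4)}{2\pi^2}.
\]
Combining the pieces gives the derivative as
\[
2^{-5/2}\left(\frac{\sqrt\pi}{\Gamma^2(3/4)}-\frac{4\sqrt\pi}{\Gamma^2(1/4)}\right).
\]
Rewrite \(\sqrt\pi/\Gamma^2(3/4)\) as \(\Gamma^2(1/4)/(2\pi^{3/2})\) and \(4\sqrt\pi/\Gamma^2(1/4)\) as \(2\Gamma^2(3/4)/\pi^{3/2}\). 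The expression becomes
\[
\frac{\Gamma^2(1/4)-4\Gamma^2(3/4)}{2\cdot 2^{5/2}\pi^{3/2}}.
\]
Since \(2\cdot2^{5/2}=8\sqrt2\), this is exactly \eqref{eq:g2direct}. The only delicate point is keeping track of the constants in this final gamma-function simplification.

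As an alternative check, I would use the contiguous relation \eqref{eq:contiguousendpoint} at \(x=-1\). Rearranged as \(2F'=\tfrac14F(\tfrac12,\tfrac12;2;-1)\), it expresses the derivative through \(F(\tfrac12,\tfrac12;2;-1)\), which can be cross-checked numerically.
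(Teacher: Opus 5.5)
Your proof is correct, but it reaches the derivative by a different route from the paper. The first half is the same as the paper's: Pfaff's transformation sends \(z=-1\) to \(w=1/2\), and then the half-argument summation (the paper calls it Kummer's half-argument value) is applied.

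Both arguments differentiate the Pfaff identity; they differ in how \(F'(1/2)\) is evaluated. The paper uses the elliptic representation \(F(x)=\tfrac{2}{\pi}K(\sqrt{x})\). The derivative of \(K\) brings in \(E(1/\sqrt2)\), which is removed by Legendre's relation at the self-complementary modulus, and the singular value \(K(1/\sqrt2)=\Gamma^2(1/4)/(4\sqrt\pi)\) completes the evaluation.

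You instead write \(F'(x)=\tfrac14\,{}_2F_1(3/2,3/2;2;x)\) and note that the shifted parameters still satisfy \(c=(a+b+1)/2\). So Gauss's second theorem applies a second time and gives \(16\sqrt\pi/\Gamma^2(1/4)\) directly, and the reflection formula converts between \(\Gamma(1/4)\) and \(\Gamma(3/4)\). I checked your constants, including \(2^{-5/2}\bigl(F(\tfrac12)-F'(\tfrac12)\bigr)\) and the final \(8\sqrt2\), and they are right.

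Your route stays entirely within hypergeometric summation formulas. It avoids \(E\), Legendre's relation and the singular value of \(K\). It also extends naturally: \((a+1,b+1;c+1)\) inherits the condition \(c=(a+b+1)/2\), so higher derivatives at \(-1\) could be computed the same way. The paper's route fits the elliptic-integral viewpoint used in its applications section.

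One wording fix: you call \((3/2,3/2;2)\) ``not in Gauss's second-theorem form'' and then verify that \(c=(a+b+1)/2=2\) holds. The sentence contradicts itself. There is no obstacle at that step, and it should simply say the theorem applies.
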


\begin{proof}
Put
\[
F(z)={}_2F_1\left(\frac12,\frac12;1;z\right).
\]
Pfaff's transformation gives
\[
F(z)=(1-z)^{-1/2}F\left(\frac{z}{z-1}\right).
\]
It sends \(z=-1\) to \(1/2\), and Kummer's half-argument value gives
\eqref{eq:f2g2}. Differentiating this identity, using the elliptic
representation of \(F(z)\), Legendre's relation, and
\(K(1/\sqrt2)=\Gamma^2(1/4)/(4\sqrt\pi)\), gives
\eqref{eq:g2direct}. See \cite[\S\S15.8,19.4,19.7,19.8]{DLMF} and
\cite{ChuCampbell}.
\end{proof}

\begin{theorem}\label{cor:85}
For every \(m\ge0\),
\begin{align}
&\sum_{n=0}^{\infty}(-1)^n
\frac{1}{16^n}\binom{2n}{n}^2
\frac{1}{n+m+1}
=
\frac{2}{\pi}\int_0^1
\frac{x^m}{\sqrt{1+x}}
K\left(\sqrt{\frac{x}{1+x}}\right)\,dx
\notag\\
&\quad=
\frac{(-1)^m16^m}
{\sqrt2\,\pi^{3/2}(2m+1)^2\binom{2m}{m}^2}
\sum_{j=0}^{m}(-1)^j
\frac{\binom{2j}{j}^2}{16^j}
\left\{(4j+1)\Gamma^2\left(\frac14\right)
-4\Gamma^2\left(\frac34\right)\right\}.
\label{eq:85}
\end{align}
\end{theorem}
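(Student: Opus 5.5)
The integral equality comes from a transformation, and the finite evaluation from the alternating recurrence of Theorem~\ref{thm:generalrec}. I take \((a,b,c)=(\tfrac12,\tfrac12,1)\), \(\eps=-1\), \(\lambda=0\), and put \(f={}_2F_1(\tfrac12,\tfrac12;1;-1)\) and \(g=\frac{d}{dz}{}_2F_1(\tfrac12,\tfrac12;1;z)|_{z=-1}\), as in Lemma~\ref{lem:f2g2}.

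\emph{Integral equality.} Since \(c-a-b=0\), the alternating series converges absolutely. Termwise integration therefore gives the series as \(\int_0^1 x^m F(\tfrac12,\tfrac12;1;-x)\,dx\). Pfaff's transformation gives
\[
F\left(\tfrac12,\tfrac12;1;-x\right)=(1+x)^{-1/2}F\left(\tfrac12,\tfrac12;1;\tfrac{x}{1+x}\right).
\]
Combined with the first identity in \eqref{eq:ellipticGF}, applied at argument \(\sqrt{x/(1+x)}\), this yields the elliptic integral.

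\emph{The recurrence and its endpoint term.} Here \(M=m\), and the alternating endpoint term \eqref{eq:Eminusgeneral} becomes \(E_-(m)=-2mf-2g\), because \(c+a+b-2M-2=-2m\). The recurrence \eqref{eq:generalrec} then reads
\[
-\left(m+\tfrac12\right)^2\Phi_{m,-1}=m^2\Phi_{m-1,-1}-2mf-2g .
\]

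\emph{Initial value.} I need \(\Phi_{0,-1}\), and I would obtain it as in the proof of Theorem~\ref{cor:84}. Since \(c=1\), the lower-endpoint contribution \(p(x)G'(x)+q(x)G(x)\) still vanishes at \(x=0\) when \(M=0\): indeed \(p(0)=0\), and \(q(x)=(c-M-1)x^M+\dots\) has vanishing leading coefficient. So the recurrence holds at \(m=0\), where \(M(M+1-c)=0\), and gives
\[
\tfrac14\Phi_{0,-1}=2g,\qquad\text{so}\qquad \Phi_{0,-1}=8g .
\]
Justifying this \(M=0\) boundary limit carefully is the only delicate point; the rest is bookkeeping.

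\emph{Solving the recurrence.} Theorem~\ref{thm:generalsolution} gives
\[
\prod_{\ell=1}^m\rho_\ell=(-1)^m\frac{(m!)^2}{(3/2)_m^2}=\frac{(-1)^m16^m}{(2m+1)^2\binom{2m}{m}^2}.
\]
The \(j\)-th inhomogeneous term carries the weight \((2jf+2g)/(j+\tfrac12)^2\) times \(\prod_{\ell=j+1}^m\rho_\ell\). Factoring out the full product, this weight becomes \((-1)^j\,4\binom{2j}{j}^2/16^j\), using the identity
\[
\frac{(3/2)_{j-1}^2}{4(j!)^2}=\frac{\binom{2j}{j}^2}{16^j}
\]
from \eqref{eq:oddproducts84}. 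Including the initial term \(8g\) as the \(j=0\) summand gives
\[
\Phi_{m,-1}=\frac{(-1)^m16^m}{(2m+1)^2\binom{2m}{m}^2}\sum_{j=0}^m(-1)^j\frac{\binom{2j}{j}^2}{16^j}\,(8jf+8g).
\]

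\emph{Substituting the constants.} Finally I insert Lemma~\ref{lem:f2g2}. Using \(\Gamma(\tfrac14)\Gamma(\tfrac34)=\pi\sqrt2\), we have \(1/\Gamma^2(3/4)=\Gamma^2(1/4)/(2\pi^2)\). Hence
\[
8jf=\frac{4j\,\Gamma^2(1/4)}{\sqrt2\,\pi^{3/2}},\qquad 8g=\frac{\Gamma^2(1/4)-4\Gamma^2(3/4)}{\sqrt2\,\pi^{3/2}} .
\]
Their sum is \(\bigl\{(4j+1)\Gamma^2(\tfrac14)-4\Gamma^2(\tfrac34)\bigr\}/(\sqrt2\,\pi^{3/2})\), which is exactly the bracket in \eqref{eq:85}.
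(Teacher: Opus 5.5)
Your proposal is correct and follows essentially the same route as the paper. For the integral you use Pfaff's transformation with the elliptic generating function; you then extend the alternating recurrence at $(a,b,c)=(\frac12,\frac12,1)$ to $M=0$ to get $\Phi_{0,-1}=8g$, solve via the product--sum formula (the paper phrases this step through Corollary~\ref{cor:explicitpowercases}(ii) with $K=1$), and insert the constants of Lemma~\ref{lem:f2g2}, with your justification of the $M=0$ step (vanishing of $p(0)$ and of the coefficient $c-M-1$) being more explicit than the paper's one-line remark.
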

\begin{proof}
Pfaff's transformation and the first identity in
\eqref{eq:ellipticGF} give
\[
{}_2F_1(1/2,1/2;1;-x)
=
\sum_{n=0}^{\infty}(-1)^n
\frac{1}{16^n}\binom{2n}{n}^2x^n
=
\frac{2}{\pi\sqrt{1+x}}
K\left(\sqrt{\frac{x}{1+x}}\right).
\]
Termwise integration proves the integral equality in \eqref{eq:85}.

For the finite evaluation, apply
Corollary~\ref{cor:explicitpowercases}\textup{(ii)} with
\(
(a,b,c,K)=(1/2,1/2,1,1).
\) By Lemma~\ref{lem:f2g2} gives
\[
f\left(\frac12,\frac12,1\right)
=
\frac{1}{\Gamma^2(3/4)}\sqrt{\frac{\pi}{2}}
\quad 
\text{and} \quad
g\left(\frac12,\frac12,1\right)
=
\frac{\Gamma^2(1/4)-4\Gamma^2(3/4)}
{8\sqrt2\,\pi^{3/2}}.
\]
Since \(c=1\), equation \eqref{eq:generalrec} remains valid at \(M=0\).
Together with \eqref{eq:Eminusgeneral}, it gives
\[
A_{0,-1}^{(1)}\left(\frac12,\frac12,1\right)
=
\frac{\Gamma^2(1/4)-4\Gamma^2(3/4)}
{\sqrt2\,\pi^{3/2}}.
\]
From \eqref{eq:Rcoefficients} and \eqref{eq:Tcoefficients}, using
\[
\frac{(-1)^m4^m(m!)^2}{\{(2m+1)!!\}^2}
=
\frac{(-1)^m16^m}
{(2m+1)^2\binom{2m}{m}^2}
\quad \text{and} 
\quad 
\frac{\{(2j-1)!!\}^2}{4^{j-1}(j!)^2}
=
4\frac{\binom{2j}{j}^2}{16^j},
\]
we obtain
\[
r_{m,0}
=
\frac{(-1)^m16^m}
{(2m+1)^2\binom{2m}{m}^2},
\quad 
\text{and}
\quad 
t_{m,j,0}
=
4r_{m,0}(-1)^{j+1}
\frac{\binom{2j}{j}^2}{16^j}.
\]
Substitution of the value of \(r_{m,0}\) proves \eqref{eq:85}.
\end{proof}
The alternating case \(d=2\) formula contains one initial constant that is not simplified here. We therefore state it as a formula in terms of finitely many initial values.

\begin{theorem}\label{cor:86}
For every \(m\ge0\),
\begin{align}\label{eq:86integral}
&\sum_{n=0}^{\infty}(-1)^n
\frac{1}{16^n}\binom{2n}{n}^2
\frac{1}{2n+m+1}
=
\frac{2}{\pi}\int_0^1
\frac{x^m}{\sqrt{1+x^2}}
K\left(\frac{x}{\sqrt{1+x^2}}\right)\,dx.
\end{align}
If \(m=2\ell+1\), the series is one half of the series in
Theorem~\ref{cor:85} with \(m=\ell\). If \(m=2\ell\), then
\begingroup\footnotesize
\begin{align}\label{eq:86}
&\sum_{n=0}^{\infty}(-1)^n
\frac{1}{16^n}\binom{2n}{n}^2
\frac{1}{2n+2\ell+1}
\notag\\
&\quad=
\frac{(-1)^\ell\binom{2\ell}{\ell}^2}{16^\ell}
\left(
\frac12\Phi_{0,-1}\left(\frac{-1}{2};\frac{1}{2},\frac{1}{2},1\right)-
\frac{1}{8\sqrt2\,\pi^{3/2}}
\sum_{j=1}^{\ell}(-1)^{j-1}
\frac{16^j}{j^2\binom{2j}{j}^2}\left\{(4j-1)\Gamma^2\left(\frac14\right)
-4\Gamma^2\left(\frac34\right)\right\}
\right).
\end{align}
\endgroup
\end{theorem}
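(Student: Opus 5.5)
The proof splits into three parts: the integral representation, the odd shifts, and the even shifts. For the integral equality \eqref{eq:86integral}, I would replace \(x\) by \(x^2\) in the Pfaff-transformed generating function from the proof of Theorem~\ref{cor:85}. This gives
\[
{}_2F_1\!\left(\tfrac12,\tfrac12;1;-x^2\right)=\sum_{n\ge0}(-1)^n\frac{1}{16^n}\binom{2n}{n}^2x^{2n}=\frac{2}{\pi\sqrt{1+x^2}}K\!\left(\frac{x}{\sqrt{1+x^2}}\right).
\]
Multiplying by \(x^m\) and integrating termwise over \([0,1]\) gives the claim. Termwise integration is harmless here. By \eqref{eq:generalCoefficientAsymptotic} with \(c-a-b=0\), the coefficients are \(O(n^{-1})\), so after the factor \(1/(2n+m+1)\) the series converges absolutely. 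Alternatively one can use the Abel-limit argument given in Section~\ref{sec:general}.

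For \(m=2\ell+1\), write \(2n+2\ell+2=2(n+\ell+1)\). The series is then exactly one half of the series of Theorem~\ref{cor:85} with \(m=\ell\), and nothing more is needed.

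For \(m=2\ell\), I would apply Theorem~\ref{thm:coefficient} (equivalently, Theorem~\ref{thm:generalsolution}) with \((a,b,c,d,\eps,K)=(\tfrac12,\tfrac12,1,2,-1,1)\) and \(\lambda_0=-\tfrac12\). The series then equals \(\tfrac12\Phi_{\ell,-1}(-\tfrac12;\tfrac12,\tfrac12,1)\).

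The hypotheses need to be checked:
\begin{itemize}
\item \(j+\lambda+1-a=j\neq0\) for \(1\le j\le\ell\);
\item each application of the recurrence uses \(M=j-\tfrac12\), which satisfies \(\RePart M>0\).
\end{itemize}
The only \(K=1\) coefficients needed are the values of \(\mathcal R_\ell\) and \(\mathcal T_{\ell,j}\) at \(\lambda=-\tfrac12\). From \eqref{eq:Rcoefficients},
\[
\mathcal R_\ell(-\tfrac12)=(-1)^\ell\frac{(1/2)_\ell^2}{(\ell!)^2}=(-1)^\ell\frac{\binom{2\ell}{\ell}^2}{16^\ell},
\]
by \eqref{eq:evenproducts84}. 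From \eqref{eq:Tcoefficients},
\[
\mathcal T_{\ell,j}(-\tfrac12)=(-1)^{\ell-j+1}\frac{(j+\tfrac12)_{\ell-j}^2}{(j)_{\ell-j+1}^2}=(-1)^{\ell}(-1)^{j-1}\frac{\binom{2\ell}{\ell}^2}{16^\ell}\cdot\frac{16^j}{j^2\binom{2j}{j}^2},
\]
again using \eqref{eq:evenproducts84}. The initial term \(\Phi_{0,-1}(-\tfrac12;\tfrac12,\tfrac12,1)\) is left unevaluated, as the statement allows.

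The step needing the most care is the endpoint term. By \eqref{eq:Eminusgeneral},
\[
E_-(j-\tfrac12;\tfrac12,\tfrac12,1)=(1-2j)f-2g,
\]
with \(f,g\) as in Lemma~\ref{lem:f2g2}. Using the reflection formula \(\Gamma(\tfrac14)\Gamma(\tfrac34)=\pi\sqrt2\), I would rewrite \(f=\Gamma^2(\tfrac14)/(2\sqrt2\,\pi^{3/2})\). Combining this with \eqref{eq:g2direct} should give
\[
(1-2j)f-2g=-\frac{2}{8\sqrt2\,\pi^{3/2}}\Bigl\{(4j-1)\Gamma^2(\tfrac14)-4\Gamma^2(\tfrac34)\Bigr\}.
\]
Finally I would multiply everything by the overall factor \(\tfrac12\) and collect terms. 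The \(\tfrac12\) cancels the \(2\) in the endpoint expression, and the sign factor \((-1)^{j-1}\) comes from \(\mathcal T_{\ell,j}(-\tfrac12)\). This should reproduce \eqref{eq:86} exactly. The main risk is a sign or factor-of-two slip in this last bookkeeping, so I would check the result numerically at \(\ell=1\).
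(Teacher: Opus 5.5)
Your proposal is correct and follows the paper's proof essentially step for step: termwise integration of the Pfaff-transformed generating function with $x\mapsto x^2$, the odd case read off from Theorem~\ref{cor:85}, and the even case from Theorem~\ref{thm:coefficient} with $(a,b,c,d,\eps,K)=(\tfrac12,\tfrac12,1,2,-1,1)$, together with \eqref{eq:evenproducts84}, \eqref{eq:f2g2} and \eqref{eq:g2direct}. Your endpoint bookkeeping, which the paper leaves implicit, is also right: $E_-(j-\tfrac12;\tfrac12,\tfrac12,1)=(1-2j)f-2g=-\tfrac{2}{8\sqrt2\,\pi^{3/2}}\{(4j-1)\Gamma^2(\tfrac14)-4\Gamma^2(\tfrac34)\}$, and combining it with your value of $\mathcal T_{\ell,j}(-\tfrac12)$ and the overall factor $\tfrac12$ reproduces \eqref{eq:86} exactly, with no sign or factor-of-two slip.
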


\begin{proof}
Replacing \(x\) by \(x^2\) in the transformation used above gives
\[
{}_2F_1(1/2,1/2;1;-x^2)
=
\sum_{n=0}^{\infty}(-1)^n
\frac{1}{16^n}\binom{2n}{n}^2x^{2n}
=
\frac{2}{\pi\sqrt{1+x^2}}
K\left(\frac{x}{\sqrt{1+x^2}}\right).
\]
Termwise integration proves \eqref{eq:86integral}; the case \(m=0\)
gives the corresponding initial integral. If \(m=2\ell+1\), then  the assertion follows immediately from Theorem~\ref{cor:85}.
For \(m=2\ell\), apply Theorem~\ref{thm:coefficient} with
\[
(a,b,c,d,\eps,K)=(1/2,1/2,1,2,-1,1).
\]
Use the product identities in \eqref{eq:evenproducts84}, the alternating
upper-endpoint values \eqref{eq:f2g2} and \eqref{eq:g2direct}, and the
initial value
\(\frac12\Phi_{0,-1}(-1/2;1/2,1/2,1)\). Since this initial value is not
further reduced, \eqref{eq:86} is a formula in terms of a single initial
constant, rather than a complete explicit evaluation.
\end{proof}

\section{An all-order lower-parameter recurrence}\label{sec:harmonicgeneral}
The parameter $c$ of the Gauss hypergeometric function produces a natural family of shifted harmonic weights.
For \(s\ge1\), define
\begin{equation}\label{eq:shiftedharmonic}
H_n^{(s)}(c_0)
=
\sum_{k=0}^{n-1}\frac{1}{(c_0+k)^s},
\qquad H_0^{(s)}(c_0)=0.
\end{equation}
At \(c_0=1\), this is the usual generalized harmonic number:
\[
H_n^{(s)}(1)=H_n^{(s)}.
\]
Let \(\Bcal_r\) be the complete exponential Bell polynomial, defined by
the generating function below; see \cite[Ch.~III, \S3.3]{Comtet}.
\begin{equation}\label{eq:BellDef}
\exp\left(\sum_{j\ge1}x_j\frac{t^j}{j!}\right)
=
\sum_{r\ge0}\Bcal_r(x_1,\ldots,x_r)\frac{t^r}{r!},
\qquad \Bcal_0=1.
\end{equation}
Define
\begin{equation}\label{eq:Hcalgeneral}
\Hcal_r(n;c_0)
=
\Bcal_r\bigl(
H_n^{(1)}(c_0),1!H_n^{(2)}(c_0),\ldots,
(r-1)!H_n^{(r)}(c_0)
\bigr).
\end{equation}
We use the shorthand
\begin{equation}\label{eq:Hcal}
\Hcal_r(n)=\Hcal_r(n;1).
\end{equation}
Thus
\begin{align}\label{eq:firstHcal}
\Hcal_0(n)&=1,\quad
\Hcal_1(n)=H_n,\quad 
\Hcal_2(n)=H_n^2+H_n^{(2)},\quad
\Hcal_3(n)=H_n^3+3H_nH_n^{(2)}+2H_n^{(3)}.
\end{align}

\begin{lemma}\label{lem:BellDerivative} For \(n,r\ge0\),
\begin{equation}\label{eq:BellDerivative}
\Hcal_r(n;c_0)
=
(-1)^r(c_0)_n
\left.
\frac{\partial^r}{\partial c^r}\frac{1}{(c)_n}
\right|_{c=c_0}.
\end{equation}
\end{lemma}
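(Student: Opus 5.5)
The plan is to compute the derivatives of \(1/(c)_n\) through its logarithm and then read off the Bell polynomial from the generating function \eqref{eq:BellDef}. Fix \(n\ge0\). When \(n=0\) both sides equal \(\Bcal_r\) evaluated at zero arguments times \(1\). This gives \(\delta_{r0}\), because \(H_0^{(s)}(c_0)=0\) and \(1/(c)_0=1\). So the case \(n=0\) is immediate, and we may take \(n\ge1\) and \(c_0\notin\{0,-1,\ldots,-(n-1)\}\), so that \(1/(c)_n\) is analytic near \(c_0\).

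Write \(c=c_0-t\), so that \((-1)^r\partial_c^r\) at \(c_0\) becomes \(\partial_t^r\) at \(t=0\). We then have
\[
\frac{(c_0)_n}{(c_0-t)_n}=\prod_{k=0}^{n-1}\frac{c_0+k}{c_0+k-t}
=\exp\Bigl(-\sum_{k=0}^{n-1}\log\bigl(1-\tfrac{t}{c_0+k}\bigr)\Bigr)
=\exp\Bigl(\sum_{j\ge1}\frac{t^j}{j}\sum_{k=0}^{n-1}\frac{1}{(c_0+k)^j}\Bigr),
\]
which is valid for \(|t|<\min_k|c_0+k|\). The exponent equals \(\sum_{j\ge1}(j-1)!\,H_n^{(j)}(c_0)\,t^j/j!\).

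By the definition \eqref{eq:BellDef} with \(x_j=(j-1)!H_n^{(j)}(c_0)\), the coefficient of \(t^r/r!\) in this expansion is \(\Hcal_r(n;c_0)\), as defined in \eqref{eq:Hcalgeneral}. The same coefficient is also \(\partial_t^r\) of the left-hand side at \(t=0\), which is \((c_0)_n(-1)^r\partial_c^r\bigl(1/(c)_n\bigr)\big|_{c=c_0}\). This is exactly \eqref{eq:BellDerivative}.

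The only delicate point is the sign bookkeeping. The substitution \(c=c_0-t\) produces the factor \((-1)^r\). The expansion \(-\log(1-u)=\sum u^j/j\) then gives positive coefficients, matching the argument \(x_j=(j-1)!H_n^{(j)}\). As a sanity check, the cases \(r=1,2\) should reproduce \(\Hcal_1=H_n^{(1)}(c_0)\) and \(\Hcal_2=H_n^{(1)}(c_0)^2+H_n^{(2)}(c_0)\), in agreement with \eqref{eq:firstHcal} at \(c_0=1\).
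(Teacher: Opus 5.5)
Your proof is correct and is essentially the paper's argument. The paper expands \((c_0)_n/(c_0+t)_n\) as \(\exp\bigl(\sum_s H_n^{(s)}(c_0)(-t)^s/s\bigr)\) and compares it with the Taylor series in \(t\); you instead substitute \(c=c_0-t\), so the sign \((-1)^r\) comes from the change of variable rather than from \((-t)^r\), and your explicit treatment of \(n=0\) and of the pole condition on \(c_0\) is a minor addition.
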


\begin{proof}
For \(t\) sufficiently small,
\begin{align*}
\frac{(c_0)_n}{(c_0+t)_n}
&=
\prod_{k=0}^{n-1}
\left(1+\frac{t}{c_0+k}\right)^{-1}=
\exp\left(
\sum_{s=1}^{\infty}
H_n^{(s)}(c_0)\frac{(-t)^s}{s}
\right)=
\sum_{r=0}^{\infty}
\Hcal_r(n;c_0)\frac{(-t)^r}{r!},
\end{align*}
where the last equality follows from \eqref{eq:BellDef} and
\eqref{eq:Hcalgeneral}. On the other hand, Taylor expansion at \(c_0\)
gives
\[
\frac{(c_0)_n}{(c_0+t)_n}
=
(c_0)_n
\sum_{r=0}^{\infty}
\left.
\frac{\partial^r}{\partial c^r}\frac{1}{(c)_n}
\right|_{c=c_0}
\frac{t^r}{r!}.
\]
Comparing coefficients of \(t^r\) proves \eqref{eq:BellDerivative}.
\end{proof}

Now fix \(a,b\) and assume
\begin{equation}\label{eq:harmonicconvergenceassumption}
\RePart(c_0-a-b)>-1.
\end{equation}
Under \eqref{eq:harmonicconvergenceassumption}, the gamma-ratio estimate from Section~\ref{sec:general}, uniformly in a sufficiently small pole-free neighborhood of \(c_0\), bounds every fixed \(c\)-derivative of the summand by \(O\bigl(n^{-1-\eta}(\log(n+1))^r\bigr)\) for some \(\eta>0\). Hence the differentiated series converges locally uniformly, and differentiation with respect to \(c\) may be performed term by term.

Define
\begin{equation}\label{eq:PsiGeneral}
\Psi_{m,r,\eps}(\lambda;a,b,c_0)
=
\sum_{n=0}^{\infty}
\frac{\eps^n(a)_n(b)_n}{(c_0)_n n!}
\frac{\Hcal_r(n;c_0)}{n+m+1+\lambda}.
\end{equation}
By Lemma~\ref{lem:BellDerivative},
\begin{equation}\label{eq:PsiGeneralDerivative}
\Psi_{m,r,\eps}(\lambda;a,b,c_0)
=
(-1)^r
\left.
\frac{\partial^r}{\partial c^r}
\Phi_{m,\eps}(\lambda;a,b,c)
\right|_{c=c_0}.
\end{equation}
For the upper-endpoint derivatives, write
\begin{equation}\label{eq:Egeneralr}
E_{\eps,r}(M;a,b,c_0)
=
(-1)^r
\left.
\frac{\partial^r}{\partial c^r}
E_\eps(M;a,b,c)
\right|_{c=c_0}.
\end{equation}
Let \(\psi(z)=\Gamma'(z)/\Gamma(z)\) be the digamma function, and
let \(\psi^{(k)}\) denote its \(k\)-th derivative. In the
non-alternating case, put
\begin{equation}\label{eq:Deltaj}
\Delta_j(a,b;c_0)
=
\psi^{(j-1)}(c_0)
+\psi^{(j-1)}(c_0-a-b+1)
-\psi^{(j-1)}(c_0-a)
-\psi^{(j-1)}(c_0-b).
\end{equation}
\begin{theorem}\label{thm:generalharmonicrec}
Let \(r\ge0\), \(m\ge1\), and
\(M=m+\lambda\). Assume
\(
\RePart M>0,\,
c_0\notin\{0,-1,-2,\ldots\},\,
\RePart(c_0-a-b)>-1.
\)
Then
\begin{align}\label{eq:generalharmonicrec}
\eps(M+1-a)(M+1-b)\Psi_{m,r,\eps}
&=
M(M+1-c_0)\Psi_{m-1,r,\eps}+rM\Psi_{m-1,r-1,\eps}
+E_{\eps,r}(M;a,b,c_0),
\end{align}
where the term containing \(\Psi_{m-1,-1,\eps}\) is omitted when
\(r=0\), and all \(\Psi\)-terms are evaluated at
\((\lambda;a,b,c_0)\). In the non-alternating case, if
\(
c_0-a-b+1, c_0-a, c_0-b
\notin\{0,-1,-2,\ldots\},
\)
then
\begin{align}\label{eq:Egeneralordinaryr}
E_{+,r}(a,b,c_0)
&=
(-1)^r
\frac{\Gamma(c_0)\Gamma(c_0-a-b+1)}
{\Gamma(c_0-a)\Gamma(c_0-b)}\times
\Bcal_r\bigl(
\Delta_1(a,b;c_0),\ldots,\Delta_r(a,b;c_0)
\bigr).
\end{align}
\end{theorem}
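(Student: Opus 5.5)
The plan is to differentiate the recurrence of Theorem~\ref{thm:generalrec} \(r\) times with respect to \(c\) at \(c=c_0\), multiply by \((-1)^r\), and then translate derivatives of \(\Phi\) and \(E_\eps\) into \(\Psi\) and \(E_{\eps,r}\) using \eqref{eq:PsiGeneralDerivative} and \eqref{eq:Egeneralr}.

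First, justify the differentiation. The identity \eqref{eq:generalrec} holds for every \(c\) in a small pole-free neighborhood of \(c_0\) on which \(\RePart(c-a-b)>-1\) persists. Both sides are analytic in \(c\) there. For the moments, the remark preceding \eqref{eq:PsiGeneral} gives locally uniform convergence of the differentiated series. For \(E_+\) analyticity is clear from the gamma quotient, and in the alternating case \({}_2F_1(a,b;c;z)\) and its \(z\)-derivative at \(z=-1\) are analytic in \(c\). Hence we may apply \((-1)^r\partial_c^r\) at \(c_0\) to both sides.

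Second, apply Leibniz's rule. The left side has a coefficient \(\eps(M+1-a)(M+1-b)\) that does not depend on \(c\), so it becomes \(\eps(M+1-a)(M+1-b)\Psi_{m,r,\eps}\). On the right side the coefficient \(M(M+1-c)\) is linear in \(c\), so only two Leibniz terms survive:
\[
\partial_c^r\bigl[M(M+1-c)\Phi_{m-1,\eps}\bigr]=M(M+1-c)\,\partial_c^r\Phi_{m-1,\eps}-rM\,\partial_c^{r-1}\Phi_{m-1,\eps}.
\]
Multiplying by \((-1)^r\) turns the first term into \(M(M+1-c_0)\Psi_{m-1,r,\eps}\). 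It turns the second into \(-rM(-1)^r\partial_c^{r-1}\Phi_{m-1,\eps}=rM(-1)^{r-1}\partial_c^{r-1}\Phi_{m-1,\eps}=rM\Psi_{m-1,r-1,\eps}\). This term vanishes when \(r=0\). The endpoint term becomes \(E_{\eps,r}\) by definition, which gives \eqref{eq:generalharmonicrec}.

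Third, compute \(E_{+,r}\). Write \(E_+(a,b,c)=\exp L(c)\) with \(L(c)=\log\Gamma(c)+\log\Gamma(c-a-b+1)-\log\Gamma(c-a)-\log\Gamma(c-b)\). This is legitimate locally because the excluded-set assumptions make all four gamma values finite and nonzero. Then \(L^{(j)}(c_0)=\Delta_j(a,b;c_0)\) by \eqref{eq:Deltaj}. Taylor expansion gives
\[
E_+(c_0+t)=E_+(c_0)\exp\Bigl(\sum_{j\ge1}\Delta_j\,t^j/j!\Bigr),
\]
so by \eqref{eq:BellDef} we get \(\partial_c^rE_+|_{c_0}=E_+(c_0)\Bcal_r(\Delta_1,\ldots,\Delta_r)\). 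Multiplying by \((-1)^r\) gives \eqref{eq:Egeneralordinaryr}. This is the same generating-function argument as in Lemma~\ref{lem:BellDerivative}.

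The main obstacle is the analytic justification in the first step rather than the algebra. One must ensure that the recurrence holds on an open \(c\)-neighborhood. In the non-alternating case, the limiting endpoint argument of Theorem~\ref{thm:generalrec} needs \(\RePart(c-a-b)>-1\) uniformly near \(c_0\). One must also ensure that \(c\)-differentiation commutes with the infinite sums. I would handle the latter with the uniform bound \(O(n^{-1-\eta}(\log(n+1))^r)\) on the derivatives, stated before \eqref{eq:PsiGeneral}. An alternative is the identity theorem for analytic functions of \(c\), applied to the integral representation.
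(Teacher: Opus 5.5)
Your proposal is correct and takes the same approach as the paper. You differentiate \eqref{eq:generalrec} \(r\) times in \(c\); since only \(M(M+1-c)\) depends on \(c\), Leibniz's rule leaves two terms, and \(E_{+,r}\) follows from the Bell-polynomial formula for the derivatives of \(\exp\) of the log-gamma quotient. Your justification that the recurrence holds on an open \(c\)-neighborhood, and that \(c\)-differentiation commutes with the sums, is somewhat more explicit than the paper's, which relies on the remark preceding \eqref{eq:PsiGeneral}.
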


\begin{proof}
Differentiate \eqref{eq:generalrec} \(r\) times with respect to \(c\),
set \(c=c_0\), and multiply by \((-1)^r\). Among the coefficients
multiplying the moments in \eqref{eq:generalrec}, only
\(M(M+1-c)\) depends on \(c\). Leibniz' rule therefore gives
\begin{align*}
&(-1)^r
\left.
\frac{\partial^r}{\partial c^r}
\{M(M+1-c)\Phi_{m-1,\eps}\}
\right|_{c=c_0}=
M(M+1-c_0)\Psi_{m-1,r,\eps}
+rM\Psi_{m-1,r-1,\eps},
\end{align*}
which proves \eqref{eq:generalharmonicrec}.

For \(\eps=1\), let
\[
Q(c)=
\frac{\Gamma(c)\Gamma(c-a-b+1)}
{\Gamma(c-a)\Gamma(c-b)}.
\]
Its \(j\)-th logarithmic derivative at \(c_0\) is
\(\Delta_j(a,b;c_0)\). The Bell-polynomial differentiation formula
\cite[Ch.~III, \S3.3]{Comtet} therefore gives
\[
Q^{(r)}(c_0)
=Q(c_0)
\Bcal_r\bigl(\Delta_1(a,b;c_0),\ldots,\Delta_r(a,b;c_0)\bigr),
\]
and multiplication by \((-1)^r\) proves
\eqref{eq:Egeneralordinaryr}.
\end{proof}

\begin{proposition}\label{prop:generalharmonicsolution}
Let \(r\ge0\), \(m\ge0\), and assume
\(
\RePart\lambda>-1,
c_0\notin\{0,-1,-2,\ldots\},
\RePart(c_0-a-b)>-1.
\)
Suppose that
\[
(j+\lambda+1-a)(j+\lambda+1-b)\neq0,
\qquad 1\le j\le m.
\]
Here \(\rho_j(\lambda;a,b,c_0,\eps)\) is the factor defined in
\eqref{eq:rhoj} with \(c=c_0\). Then
\begin{align}\label{eq:generalharmonicsolution}
\Psi_{m,r,\eps}(\lambda;a,b,c_0)
&=
\left(
\prod_{\ell=1}^{m}
\rho_\ell(\lambda;a,b,c_0,\eps)
\right)
\Psi_{0,r,\eps}(\lambda;a,b,c_0)
\notag\\
&\quad+
\sum_{j=1}^{m}
\frac{
 r(j+\lambda)\Psi_{j-1,r-1,\eps}(\lambda;a,b,c_0)
 +E_{\eps,r}(j+\lambda;a,b,c_0)
}
{\eps(j+\lambda+1-a)(j+\lambda+1-b)}
\prod_{\ell=j+1}^{m}
\rho_\ell(\lambda;a,b,c_0,\eps),
\end{align}
where the term containing \(\Psi_{j-1,r-1,\eps}\) is omitted when
\(r=0\). Empty products are understood to be \(1\), and the sum is
empty when \(m=0\).
\end{proposition}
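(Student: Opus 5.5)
The plan is to mirror the proof of Theorem~\ref{thm:generalsolution}, with Theorem~\ref{thm:generalharmonicrec} as the first-order recurrence. The extra term \(rM\Psi_{m-1,r-1,\eps}\) is treated as a known inhomogeneity; it comes from the previous order \(r-1\), and the statement keeps it symbolic, so there is no need to resolve it.

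First, apply Theorem~\ref{thm:generalharmonicrec} with \(m=j\), for \(1\le j\le m\), so that \(M=j+\lambda\). Its hypothesis \(\RePart M>0\) holds because \(\RePart\lambda>-1\) and \(j\ge1\). The remaining hypotheses on \(c_0\) and on \(\RePart(c_0-a-b)\) are exactly those assumed in the proposition. This gives
\[
\eps(j+\lambda+1-a)(j+\lambda+1-b)\Psi_{j,r,\eps}
=(j+\lambda)(j+\lambda+1-c_0)\Psi_{j-1,r,\eps}
+r(j+\lambda)\Psi_{j-1,r-1,\eps}+E_{\eps,r}(j+\lambda;a,b,c_0).
\]
By the nonvanishing assumption we may divide by \(\eps(j+\lambda+1-a)(j+\lambda+1-b)\). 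The coefficient of \(\Psi_{j-1,r,\eps}\) then becomes exactly \(\rho_j(\lambda;a,b,c_0,\eps)\) from \eqref{eq:rhoj} with \(c=c_0\). The result is
\[
\Psi_{j,r,\eps}=\rho_j\Psi_{j-1,r,\eps}+\beta_j,
\qquad
\beta_j=\frac{r(j+\lambda)\Psi_{j-1,r-1,\eps}+E_{\eps,r}(j+\lambda;a,b,c_0)}{\eps(j+\lambda+1-a)(j+\lambda+1-b)}.
\]
When \(r=0\), the \(\Psi_{j-1,-1,\eps}\) term is absent, in line with the convention of Theorem~\ref{thm:generalharmonicrec}.

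Next, iterate this first-order recurrence from \(j=1\) to \(j=m\), or equivalently argue by induction on \(m\). The induction step is
\[
\rho_{m}\left(\prod_{\ell=1}^{m-1}\rho_\ell\,\Psi_{0}+\sum_{j=1}^{m-1}\beta_j\prod_{\ell=j+1}^{m-1}\rho_\ell\right)+\beta_m,
\]
which reproduces \eqref{eq:generalharmonicsolution} because \(\prod_{\ell=m+1}^{m}\rho_\ell=1\). The base case \(m=0\) is trivial, since the sum is empty and the product equals \(1\).

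The only delicate point is making sure every \(\Psi\) appearing here is well defined. The series \eqref{eq:PsiGeneral} for \(\Psi_{0,r,\eps}\), and for \(\Psi_{j-1,r-1,\eps}\) with \(j\ge1\), must converge, and the termwise differentiation behind Theorem~\ref{thm:generalharmonicrec} must be legitimate. Both follow from the locally uniform convergence discussed before \eqref{eq:PsiGeneral}, under \(\RePart(c_0-a-b)>-1\) and \(\RePart\lambda>-1\): the denominators \(n+j+\lambda\) stay away from zero. No further obstacle is expected, since the argument is pure algebraic iteration.
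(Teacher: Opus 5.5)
Your proposal is correct and matches the paper's proof: apply Theorem~\ref{thm:generalharmonicrec} with \(m=j\), divide by \(\eps(j+\lambda+1-a)(j+\lambda+1-b)\) to get a first-order recurrence with coefficient \(\rho_j\) (treating the order-\((r-1)\) term as a known inhomogeneity), and iterate from \(j=1\) to \(j=m\). Your explicit check that \(\RePart(j+\lambda)>0\) and your spelled-out induction step usefully fill in details the paper leaves implicit.
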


\begin{proof}
Applying Theorem~\ref{thm:generalharmonicrec} with \(m=j\) and dividing
by \(\eps(j+\lambda+1-a)(j+\lambda+1-b)\) gives
\begin{align*}
\Psi_{j,r,\eps}
&=
\rho_j(\lambda;a,b,c_0,\eps)\Psi_{j-1,r,\eps}+
\frac{
 r(j+\lambda)\Psi_{j-1,r-1,\eps}
 +E_{\eps,r}(j+\lambda;a,b,c_0)
}
{\eps(j+\lambda+1-a)(j+\lambda+1-b)},
\end{align*}
where all \(\Psi\)-terms are evaluated at \((\lambda;a,b,c_0)\).
Iterating this identity from \(j=1\) to \(j=m\) gives
\eqref{eq:generalharmonicsolution}.
\end{proof}
At \(c_0=1\), the shifted harmonic numbers reduce to the ordinary
generalized harmonic numbers. Proposition~\ref{prop:generalharmonicsolution}
reduces the harmonic moments to their initial functions
\(\Psi_{0,r,\eps}(\lambda;a,b,c_0)\). In the next section, we evaluate
these initial values in the non-alternating complementary case
\((a,b,c_0,\eps,\lambda)=(\alpha,1-\alpha,1,1,0)\).

\section{Closed initial harmonic moments}\label{sec:harmonicinitial}
The initial value required in Proposition~\ref{prop:generalharmonicsolution}
is, in general, a function of the shift parameter \(\lambda\). We first
evaluate it at \(\lambda=0\), corresponding to the denominator \(n+1\).
Initial values for higher denominator powers require derivatives with
respect to \(\lambda\) and are not evaluated here. In a neighborhood
of \(c=1\), one has
\begin{equation}\label{eq:Ireduction}
{}_3F_2\left(
\begin{matrix}\alpha,1-\alpha,1\\c,2\end{matrix};1\right)
=
\frac{1}{\alpha(1-\alpha)}
\left[
\frac{\Gamma(c)^2}
{\Gamma(c-\alpha)\Gamma(c-1+\alpha)}
-(c-1)
\right].
\end{equation}
Indeed, integrate the hypergeometric differential equation for
\({}_2F_1(\alpha,1-\alpha;c;x)\) over \([0,1]\), use Gauss's formula,
and continue analytically to \(c=1\).

\begin{theorem}\label{thm:allorderinitial}
Let \(0<\alpha<1\). For every \(r\ge0\),
\begin{align}\label{eq:allorderinitial}
&\sum_{n=0}^{\infty}
\frac{(\alpha)_n(1-\alpha)_n}{(n!)^2}
\frac{\Hcal_r(n)}{n+1}=
\frac{1}{\alpha(1-\alpha)}
\left[
\delta_{r,1}
+(-1)^r\frac{\sin(\pi\alpha)}{\pi}
\Bcal_r\bigl(
\Delta_1(\alpha,1-\alpha;1),\ldots,
\Delta_r(\alpha,1-\alpha;1)
\bigr)
\right].
\end{align}
\end{theorem}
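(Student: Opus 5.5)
The plan is to differentiate the reduction formula \eqref{eq:Ireduction} $r$ times in $c$ at $c=1$. By \eqref{eq:general3F2} with $m=0$, $\lambda=0$, $\eps=1$,
\[
\Phi_{0,1}(0;\alpha,1-\alpha,c)={}_3F_2\left(\begin{matrix}\alpha,1-\alpha,1\\c,2\end{matrix};1\right).
\]
Hence by \eqref{eq:PsiGeneralDerivative} the left side of \eqref{eq:allorderinitial} equals
\[
\Psi_{0,r,1}(0;\alpha,1-\alpha,1)=(-1)^r\left.\frac{\partial^r}{\partial c^r}\Phi_{0,1}(0;\alpha,1-\alpha,c)\right|_{c=1}.
\]
Termwise differentiation is justified by the discussion before \eqref{eq:PsiGeneral}, since $c_0-a-b=0>-1$. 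So it suffices to compute the $r$-th $c$-derivative at $c=1$ of the right side of \eqref{eq:Ireduction}.

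First I will confirm \eqref{eq:Ireduction} on a neighbourhood of $c=1$. Integrate Euler's equation for $F={}_2F_1(\alpha,1-\alpha;c;x)$ over $[0,r]$ and rewrite $x(1-x)F''$ via $(x(1-x)F')'$. This gives
\[
r(1-r)F'(r)+\bigl(c-1-(1-2\cdot 1)\bigr)\ldots
\]
which is the $M=0$ instance of the integration by parts in Theorem~\ref{thm:generalrec}. The boundary term at $0$ vanishes because the factor $x^{M+1}$ in $p$ is replaced by $x$. The boundary term at $1$ is $E_+(\alpha,1-\alpha,c)$ from Lemma~\ref{lem:contiguousendpoint}, plus the vanishing term $(c-1)(1-x)F$.

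This should give
\[
\alpha(1-\alpha)\Phi_{0,1}=E_+(\alpha,1-\alpha,c)-(c-1)F(\ldots;1),
\]
with the last term to be handled carefully. For $\RePart(c-1)>0$, Gauss gives $F(\alpha,1-\alpha;c;1)=\Gamma(c)\Gamma(c-1)/(\Gamma(c-\alpha)\Gamma(c-1+\alpha))$. Analytic continuation in $c$ then yields \eqref{eq:Ireduction}. Since the paper already asserts \eqref{eq:Ireduction}, I will simply cite it. Both sides are analytic in $c$ near $1$: the left side because its series converges locally uniformly in $c$.

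Next I differentiate. The term $-(c-1)$ contributes $-\delta_{r,1}$ to $\partial_c^r$, and hence $(-1)^r(-\delta_{r,1})=\delta_{r,1}$. For the gamma quotient, write
\[
Q(c)=\frac{\Gamma(c)\Gamma(c-a-b+1)}{\Gamma(c-a)\Gamma(c-b)}\quad\text{with } (a,b)=(\alpha,1-\alpha).
\]
Then $Q(c)=\Gamma(c)^2/(\Gamma(c-\alpha)\Gamma(c-1+\alpha))$ exactly. As in the proof of Theorem~\ref{thm:generalharmonicrec}, the Bell-polynomial formula gives
\[
Q^{(r)}(1)=Q(1)\,\Bcal_r(\Delta_1,\ldots,\Delta_r),\qquad \Delta_j=\Delta_j(\alpha,1-\alpha;1).
\]
The hypotheses hold: $c_0-a-b+1=1$, $c_0-a=1-\alpha$ and $c_0-b=\alpha$ are not poles. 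Finally, $Q(1)=1/(\Gamma(1-\alpha)\Gamma(\alpha))=\sin(\pi\alpha)/\pi$ by the reflection formula. Multiplying by $(-1)^r/(\alpha(1-\alpha))$ gives \eqref{eq:allorderinitial}.

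The main obstacle is the validity of \eqref{eq:Ireduction} and the interchange near $c=1$. Gauss's formula for $F(\alpha,1-\alpha;c;1)$ needs $\RePart c>1$, so the identity has to be continued to a full neighbourhood of $c=1$, including $c<1$. I would do this by noting two things. The left side, as the series \eqref{eq:generalSeries}, is holomorphic in $c$ for $\RePart(c-1)>-1$ by uniform convergence. The right side is holomorphic there too, since $(c-1)\Gamma(c-1)=\Gamma(c)$ removes the pole. The identity theorem then transfers equality from $\RePart c>1$, and the derivatives at $c=1$ are legitimate.
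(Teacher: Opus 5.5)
Your argument is correct and is essentially the paper's proof: you cite \eqref{eq:Ireduction}, identify the left side as $(-1)^r\partial_c^r\Phi_{0,1}(0;\alpha,1-\alpha,c)$ at $c=1$ via Lemma~\ref{lem:BellDerivative}, observe that $-(c-1)$ contributes $\delta_{r,1}$, and apply the Bell-polynomial formula to the gamma quotient with $Q(1)=\sin(\pi\alpha)/\pi$. One correction to your optional re-derivation of \eqref{eq:Ireduction}, which does not affect the proof since you ultimately cite the identity: the lower boundary term does not vanish, because for $(a,b)=(\alpha,1-\alpha)$ and $M=0$ one has $q(x)\equiv c-1$, so $q(0)F(0)=c-1$; the correct relation is therefore $\alpha(1-\alpha)\Phi_{0,1}=E_+(\alpha,1-\alpha,c)-(c-1)$, equivalently $(c-1)F(\alpha,1-\alpha;c;1)-(c-1)$ for $\RePart c>1$, whereas your $E_+-(c-1)F(\alpha,1-\alpha;c;1)$ vanishes identically.
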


\begin{proof}
Differentiate \eqref{eq:Ireduction} \(r\) times at \(c=1\), use
Lemma~\ref{lem:BellDerivative}, and apply the Bell-polynomial derivative
formula to the gamma quotient.
\end{proof}

At \(\alpha=1/4\), Gauss's rational digamma formula gives
\(\Delta_1(1/4,3/4;1)=6\log2\), and hence
\begin{equation}\label{eq:R4harmonic}
\sum_{n=0}^{\infty}
\frac{1}{64^n}\binom{2n}{n}\binom{4n}{2n}
\frac{H_n}{n+1}
=
\frac{16}{3}-\frac{16\sqrt2\log2}{\pi}.
\end{equation}
At \(\alpha=1/2\),
\[
\Delta_1(1/2,1/2;1)=4\log2,\qquad
\Delta_2(1/2,1/2;1)=-\frac{2\pi^2}{3},\qquad
\Delta_3(1/2,1/2;1)=24\zeta(3),
\]
so
\begin{align}\label{eq:cubicH}
&\sum_{n=0}^{\infty}
\frac{1}{16^n}\binom{2n}{n}^2
\frac{H_n^3+3H_nH_n^{(2)}+2H_n^{(3)}}{n+1}
=
\frac{32\pi^2\log2-256\log^3 2-96\zeta(3)}{\pi}.
\end{align}
The appearance of \(\zeta(3)\) is the first odd-zeta layer in this
family.

\subsection{Rational parameters and Dirichlet
\texorpdfstring{\(L\)}{L}-values}

Let
\(
\alpha=\frac aq,
\, q\ge2,
\,1\le a<q,
\, (a,q)=1.
\)
For a Dirichlet character \(\chi\) modulo \(q\), write
\[
L(s,\chi)=\sum_{n=1}^{\infty}\frac{\chi(n)}{n^s}
\qquad (\RePart s>1),
\]
and let \(\varphi(q)\) denote Euler's totient function. The constants
in Theorem~\ref{thm:allorderinitial} can then be described by standard
formulas for rational polygamma values.

\begin{proposition}\label{prop:Dirichlet}
For \(\alpha=a/q\),
\begin{align}\label{eq:D1cyclo}
\Delta_1(a/q,1-a/q;1)
&=
2\log q-
\sum_{u=1}^{q-1}
\cos\left(\frac{2\pi au}{q}\right)
\log\left(2-2\cos\frac{2\pi u}{q}\right).
\end{align}
For every integer \(j\ge2\),
\begin{align}\label{eq:DjL}
\Delta_j(a/q,1-a/q;1)
&=
(-1)^j(j-1)!
\left[
2\zeta(j)
-
\frac{2q^j}{\varphi(q)}
\sum_{\substack{\chi\ (\mathrm{mod}\ q)\\ \chi(-1)=1}}
\overline{\chi(a)}L(j,\chi)
\right].
\end{align}
The sum is over all even Dirichlet characters modulo \(q\), including
the principal character.
\end{proposition}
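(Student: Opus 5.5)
The plan is to specialize \eqref{eq:Deltaj} at \((a,b,c_0)=(\alpha,1-\alpha,1)\). Since \(c_0-a-b+1=1\), \(c_0-a=1-\alpha\), and \(c_0-b=\alpha\), we get
\[
\Delta_j(\alpha,1-\alpha;1)=2\psi^{(j-1)}(1)-\psi^{(j-1)}(\alpha)-\psi^{(j-1)}(1-\alpha).
\]
Both formulas then reduce to evaluating the symmetric sum \(\psi^{(j-1)}(a/q)+\psi^{(j-1)}(1-a/q)\). The two cases \(j\ge2\) and \(j=1\) are handled separately, because only the first involves absolutely convergent Hurwitz sums.

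\emph{Case \(j\ge2\).} I would use \(\psi^{(j-1)}(z)=(-1)^j(j-1)!\,\zeta(j,z)\), where \(\zeta(j,z)=\sum_{k\ge0}(k+z)^{-j}\). This gives \(2\psi^{(j-1)}(1)=(-1)^j(j-1)!\,2\zeta(j)\). Writing \(k+a/q=n/q\), we also get
\[
\zeta(j,a/q)+\zeta(j,1-a/q)=q^j\sum_{\substack{n\ge1\\ n\equiv a\ (q)}}n^{-j}+q^j\sum_{\substack{n\ge1\\ n\equiv -a\ (q)}}n^{-j}.
\]
Since \((a,q)=1\), character orthogonality gives \(\mathbf 1_{n\equiv\pm a}=\varphi(q)^{-1}\sum_\chi\overline{\chi(\pm a)}\chi(n)\). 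Adding the two indicators produces the factor \(\overline{\chi(a)}(1+\chi(-1))\), which is \(2\overline{\chi(a)}\) for even \(\chi\) and \(0\) for odd \(\chi\). The combined sum is therefore \(\frac{2q^j}{\varphi(q)}\sum_{\chi\ \mathrm{even}}\overline{\chi(a)}L(j,\chi)\), and \eqref{eq:DjL} follows. Absolute convergence for \(j\ge2\) justifies interchanging the sums. The case \(q=2\), where \(a\equiv-a\), needs no separate treatment, because the two indicator functions are simply added.

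\emph{Case \(j=1\).} This is the main obstacle, since \(\sum n^{-1}\) over a residue class diverges. I would replace characters by additive characters and regularize. Starting from
\[
\psi(z)=-\gamma+\sum_{k\ge0}\Bigl(\frac1{k+1}-\frac1{k+z}\Bigr),
\]
I would insert a damping factor \(x^{n}\) with \(0<x<1\) and let \(x\to1^-\) at the end (Abel summation). For this I would write
\[
\mathbf 1_{n\equiv a\ (q)}+\mathbf 1_{n\equiv -a\ (q)}=\frac2q\sum_{u=0}^{q-1}\cos\Bigl(\frac{2\pi au}{q}\Bigr)e^{2\pi i un/q}.
\]
For \(u\neq0\), the sums \(\sum_n e^{2\pi iun/q}x^n/n=-\log(1-xe^{2\pi iu/q})\) converge as \(x\to1^-\), and pairing \(u\) with \(q-u\) gives the real part \(-\tfrac12\log(2-2\cos(2\pi u/q))\). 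The \(u=0\) term produces a divergent \(\tfrac2q\sum x^n/n\). After accounting for the scaling \(k+a/q=n/q\), this divergence cancels against the \(2\sum x^{k}/(k+1)\) part coming from \(2\psi(1)\), leaving the constant \(2\log q\) and cancelling \(\gamma\).

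A cleaner alternative for \(j=1\), which I would try first, is to quote Gauss's digamma theorem (already invoked in the paper at \(\alpha=1/4\)):
\[
\psi(a/q)=-\gamma-\log(2q)-\tfrac{\pi}{2}\cot(\pi a/q)+2\sum_{u=1}^{\lfloor (q-1)/2\rfloor}\cos\Bigl(\frac{2\pi ua}{q}\Bigr)\log\sin\Bigl(\frac{\pi u}{q}\Bigr).
\]
Adding the corresponding formula for \(1-a/q\), the cotangent terms cancel. Extending the sum symmetrically to \(1\le u\le q-1\) and using \(\log(2-2\cos(2\pi u/q))=2\log 2+2\log\sin(\pi u/q)\) together with \(\sum_{u=1}^{q-1}\cos(2\pi au/q)=-1\) absorbs the \(\log 2\) terms. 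This yields
\[
\psi(\alpha)+\psi(1-\alpha)=-2\gamma-2\log q+\sum_{u=1}^{q-1}\cos\Bigl(\frac{2\pi au}{q}\Bigr)\log\Bigl(2-2\cos\frac{2\pi u}{q}\Bigr).
\]
Subtracting this from \(2\psi(1)=-2\gamma\) gives \eqref{eq:D1cyclo}. The only care needed is the bookkeeping of the middle term \(u=q/2\) when \(q\) is even, which is automatically correct in the symmetric sum over \(1\le u\le q-1\).
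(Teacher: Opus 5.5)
Your proposal is correct and follows essentially the same route as the paper's (very terse) proof. For $j=1$ you use Gauss's digamma theorem, and your handling of the cancelling cotangents, the vanishing middle term $u=q/2$, and $\sum_{u=1}^{q-1}\cos(2\pi au/q)=-1$ is exactly the bookkeeping the paper leaves implicit. For $j\ge2$ you use the Hurwitz-zeta form of $\psi^{(j-1)}$ with character orthogonality combining the classes $\pm a$, so odd characters cancel and even ones double.

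One caution on your optional Abel-summation variant for $j=1$: the damping on the $2\psi(1)$ part has to be $x^{qk}$ (matching $x^{n}$ with $n=qk+a$), not $x^{k}$ with the same $x$. Only then does $-2\log(1-x)+2\log(1-x^{q})\to2\log q$ give the constant you claim; with $x^{k}$ the divergent parts would cancel to $0$ instead.
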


\begin{proof}
Gauss rational digamma formula gives \eqref{eq:D1cyclo}. For
\(j\ge2\), the polygamma series and character orthogonality combine the
residue classes \(a\) and \(-a\): odd characters cancel and even
characters double. Substitution in \eqref{eq:Deltaj} gives
\eqref{eq:DjL}.
\end{proof}

\begin{corollary}\label{cor:constantfield}
For rational \(\alpha=a/q\), every initial value in Theorem~\ref{thm:allorderinitial} is an explicit Bell polynomial in logarithms of explicit algebraic numbers, zeta values, and even-character values \(L(j,\chi)\), multiplied by algebraic numbers and powers of \(\pi\).
\end{corollary}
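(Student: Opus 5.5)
The plan is to combine Theorem~\ref{thm:allorderinitial} with Proposition~\ref{prop:Dirichlet} and check that every factor lies in the stated field of constants. First, specialize Theorem~\ref{thm:allorderinitial} to \(\alpha=a/q\). The initial value then equals
\[
\frac{q^2}{a(q-a)}
\Bigl[\delta_{r,1}+(-1)^r\tfrac{\sin(\pi a/q)}{\pi}\,
\Bcal_r\bigl(\Delta_1,\ldots,\Delta_r\bigr)\Bigr],
\]
with \(\Delta_j=\Delta_j(a/q,1-a/q;1)\). The prefactor \(q^2/(a(q-a))\) is rational. Since \(\sin(\pi a/q)\) is a real part of a root of unity up to a factor, it is algebraic, and \(1/\pi\) supplies the power of \(\pi\). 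The term \(\delta_{r,1}\) is rational, so it also belongs to the stated class.

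Next, I would rewrite each argument of the Bell polynomial using Proposition~\ref{prop:Dirichlet}. For \(j=1\), \eqref{eq:D1cyclo} gives \(\Delta_1\) as \(2\log q\) plus a combination of \(\log(2-2\cos(2\pi u/q))\) with coefficients \(\cos(2\pi au/q)\). These coefficients are algebraic, and the numbers \(2-2\cos(2\pi u/q)=|1-e^{2\pi iu/q}|^2\) are algebraic, positive, and explicit. Thus \(\Delta_1\) is an algebraic-coefficient linear form in logarithms of explicit algebraic numbers.

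For \(j\ge2\), \eqref{eq:DjL} expresses \(\Delta_j\) as a rational multiple of \(\zeta(j)\) plus a combination of the values \(L(j,\chi)\) over even characters. The coefficients \(\overline{\chi(a)}q^j/\varphi(q)\) are algebraic, since character values are roots of unity.

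Finally, \(\Bcal_r\) is a polynomial with nonnegative integer coefficients. Substituting these expressions therefore gives an explicit polynomial in logarithms of algebraic numbers, zeta values, and even-character \(L\)-values, with algebraic coefficients. Multiplying by \(\sin(\pi a/q)/\pi\) and the rational prefactor yields the claimed form.

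I do not expect a real obstacle here. The only point needing care is the phrasing. "Bell polynomial in these constants" should be read as the polynomial \(\Bcal_r\) evaluated at algebraic linear combinations of the constants, which expands to a polynomial in the constants themselves with algebraic coefficients. I would state this explicitly.
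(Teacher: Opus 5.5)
Your proposal is correct and takes the same route as the paper, whose proof simply inserts \eqref{eq:D1cyclo} and \eqref{eq:DjL} into \eqref{eq:allorderinitial} and notes that \(\sin(\pi a/q)\) is algebraic. Your additional remarks fill in details the paper leaves implicit: the rational prefactor \(q^2/(a(q-a))\), the additive rational term from \(\delta_{r,1}\) (which the statement's wording does not literally cover), and the nonnegative integer coefficients of \(\Bcal_r\).
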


\begin{proof}
Insert \eqref{eq:D1cyclo} and \eqref{eq:DjL} into \eqref{eq:allorderinitial}. Also, \(\sin(\pi a/q)\) is algebraic.
\end{proof}

\subsubsection{Examples}

The following examples show how Proposition~\ref{prop:Dirichlet} is used. Recall that
\[
\Hcal_3(n)=H_n^3+3H_nH_n^{(2)}+2H_n^{(3)}.
\]

\begin{example}[The modulus \(q=3\)]\label{ex:q3}
For modulus three, the principal character is the only even character. Since
\[
L(j,\chi_0)=(1-3^{-j})\zeta(j),
\]
Proposition~\ref{prop:Dirichlet} gives
\begin{equation}\label{eq:Dq3}
\Delta_1(1/3,2/3;1)=3\log 3,
\qquad \Delta_2(1/3,2/3;1)=-\pi^2,
\qquad \Delta_3(1/3,2/3;1)=48\zeta(3).
\end{equation}
Using Theorem~\ref{thm:allorderinitial} and
\(
 a_n(1/3)=27^{-n}\binom{3n}{n}\binom{2n}{n},
\)
we obtain
\begin{align}\label{eq:q3cubicseries}
&\sum_{n=0}^{\infty}
\frac1{27^n}\binom{3n}{n}\binom{2n}{n}
\frac{H_n^3+3H_nH_n^{(2)}+2H_n^{(3)}}{n+1}=-\frac{9\sqrt3}{4\pi}
\left(27\log^3 3-9\pi^2\log 3+48\zeta(3)\right).
\end{align}
The same substitution also gives the first- and second-order harmonic evaluations at \(\alpha=1/3\).
\end{example}

\begin{example}[The modulus \(q=5\)]\label{ex:q5}
Let
\[
\phi=\frac{1+\sqrt5}{2},
\qquad
\chi_5(n)=\left(\frac{n}{5}\right),
\qquad
A=\frac52\log 5+\sqrt5\log \phi.
\]
The even characters modulo five are the principal character and the
quadratic character \(\chi_5\). Gauss's formula and
Proposition~\ref{prop:Dirichlet} give
\begin{align}\label{eq:Dq5}
\Delta_1(1/5,4/5;1)=A,
\quad \Delta_2(1/5,4/5;1)=-\frac{25+6\sqrt5}{15}\pi^2,\quad 
\Delta_3(1/5,4/5;1)=120\zeta(3)+125L(3,\chi_5).
\end{align}
Here we used the standard evaluation for the even quadratic character
modulo five (obtainable from the functional equation and generalized
Bernoulli numbers; see \cite[\S\S12.10--12.12]{Apostol})
\[
L(2,\chi_5)=\frac{4\pi^2}{25\sqrt5} \quad \text{and} \quad
\frac{\sin(\pi/5)/\pi}{(1/5)(4/5)}
=\frac{25\sqrt{10-2\sqrt5}}{16\pi}.
\]
Theorem~\ref{thm:allorderinitial} therefore yields
\begingroup\small
\begin{align}\label{eq:S15}
&\sum_{n=0}^{\infty}
\frac{\left(\frac{1}{5}\right)_n\left(\frac{4}{5}\right)_n}{(n!)^2}
\frac{H_n}{n+1}
=
\frac{25}{4}
\left(
1-\frac{\sqrt{10-2\sqrt5}}{4\pi}A
\right),
\notag\\
&\sum_{n=0}^{\infty}
\frac{\left(\frac{1}{5}\right)_n\left(\frac{4}{5}\right)_n}{(n!)^2}
\frac{H_n^2+H_n^{(2)}}{n+1}
=
\frac{25\sqrt{10-2\sqrt5}}{16\pi}
\left(
A^2-\frac{25+6\sqrt5}{15}\pi^2
\right),
\notag\\
&\sum_{n=0}^{\infty}
\frac{\left(\frac{1}{5}\right)_n\left(\frac{4}{5}\right)_n}{(n!)^2}
\frac{H_n^3+3H_nH_n^{(2)}+2H_n^{(3)}}{n+1}=
-\frac{25\sqrt{10-2\sqrt5}}{16\pi}
\left(
A^3-\frac{25+6\sqrt5}{5}\pi^2A
+120\zeta(3)+125L(3,\chi_5)
\right).
\end{align}
\endgroup
Thus the first two harmonic orders contain only logarithms and powers of \(\pi\), while harmonic order three introduces the value \(L(3,\chi_5)\).
\end{example}

\begin{remark}

Since only even characters occur in \eqref{eq:DjL}, their values at
even integers are algebraic multiples of powers of \(\pi\). At odd
integers, new Dirichlet \(L\)-values may occur, as in
Example~\ref{ex:q5}. For complex characters, conjugate terms combine
to ensure that \(\Delta_j(a/q,1-a/q;1)\) is real.
\end{remark}

\section{Finite hypergeometric and binomial--harmonic identities}
\label{sec:finiteidentities}

We now record some finite identities that follow from the results already
proved. The first one is obtained by comparing the non-alternating moment
recurrence with Kouba's terminating \({}_3F_2(1)\) formula. The harmonic
forms then follow by the lower-parameter differentiation used in
Section~\ref{sec:harmonicgeneral}.

\begin{theorem}\label{thm:finitecomparison}
Let \(m\geq0\), put
\(
\delta=1+a+b-c,
\)
 Then
\begin{align}
\sum_{n=0}^{m}
\frac{(a)_n(b)_n}{(c)_n n!}
&=
\frac{(a+1)_m(b+1)_m}{(c)_m m!}
\sum_{j=0}^{m}
(-1)^j\binom{m}{j}
\frac{(\delta)_j j!}{(a+1)_j(b+1)_j}.
\label{eq:finitecomparison}
\end{align}
\end{theorem}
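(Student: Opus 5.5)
The plan is to treat \eqref{eq:finitecomparison} as an identity between terminating hypergeometric sums and prove it by classical transformations, without passing through the moment recurrence. Both sides are rational in \(a,b,c\), so it suffices to prove it for generic parameters, with \(a+1,b+1\notin\{0,-1,\ldots,-m+1\}\), \(c\notin\{0,-1,\ldots\}\), and \(a,b,c\) avoiding integers. The identity then extends by continuity to every parameter value at which both sides are defined.

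First I will rewrite the inner sum on the right. Since \((-1)^j\binom{m}{j}=(-m)_j/j!\), it becomes
\[
\sum_{j=0}^{m}\frac{(-m)_j(\delta)_j(1)_j}{(a+1)_j(b+1)_j\,j!}
={}_3F_2\!\left(\begin{matrix}-m,\delta,1\\ a+1,b+1\end{matrix};1\right).
\]
Next I will reverse the order of summation on the left, putting \(n=m-k\). Using \((u)_{m-k}=(u)_m(-1)^k/(1-u-m)_k\), this gives
\[
\sum_{n=0}^{m}\frac{(a)_n(b)_n}{(c)_n n!}
=\frac{(a)_m(b)_m}{(c)_m m!}\,
{}_3F_2\!\left(\begin{matrix}-m,1,1-c-m\\ 1-a-m,1-b-m\end{matrix};1\right).
\]
This reduces the theorem to a transformation between two terminating \({}_3F_2(1)\) series that share the upper parameters \(-m\) and \(1\). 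The prefactors are compatible, since \((a+1)_m/(a)_m=(a+m)/a\), and similarly for \(b\).

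The main step is to connect these two \({}_3F_2\) series using the Sheppard--Whipple transformation for a terminating \({}_3F_2(1)\) (see \cite[Cor.~3.3.4]{AndrewsAskeyRoy}):
\[
{}_3F_2\!\left(\begin{matrix}-m,x,y\\ u,v\end{matrix};1\right)
=\frac{(v-y)_m}{(v)_m}\,
{}_3F_2\!\left(\begin{matrix}-m,y,u-x\\ u,1+y-v-m\end{matrix};1\right).
\]
For the first application I take \(x=1-c-m\), \(y=1\), \(u=1-a-m\), \(v=1-b-m\). The series becomes
\[
{}_3F_2\!\left(\begin{matrix}-m,1,c-a\\ 1-a-m,\,b+1\end{matrix};1\right),
\]
multiplied by \((-b-m)_m/(1-b-m)_m=b/(b+m)\). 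For the second application I take \(x=c-a\), \(y=1\), \(u=b+1\), \(v=1-a-m\). Here the parameters \(u,v\) are interchanged relative to the first step. This should produce the upper parameter \(b+1-c+a=\delta\) and the lower parameter \(a+1\), with a prefactor \(a/(a+m)\). The two prefactors then cancel exactly against \((a+1)_m(b+1)_m/\bigl((a)_m(b)_m\bigr)\).

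I expect the main obstacle to be bookkeeping rather than ideas. The difficult part is choosing which parameter plays \(x\) and which plays \(u\) at each step, so that the output has lower parameters exactly \(a+1\), \(b+1\) and upper parameter exactly \(\delta\). It also requires checking that no intermediate Pochhammer denominator vanishes under the generic assumptions. If the parameter matching fails to close, I will use a self-contained fallback and prove the identity by induction on \(m\). Let \(R_m\) be the right side. The case \(m=0\) is immediate. For the inductive step I will show \(R_m-R_{m-1}=(a)_m(b)_m/((c)_m m!)\), using Pascal's rule \(\binom{m}{j}=\binom{m-1}{j}+\binom{m-1}{j-1}\) and telescoping in \(j\). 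The Pochhammer simplification in this step is where the care is needed.
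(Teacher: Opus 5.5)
Your route is correct, and it differs from the paper's. There is one arithmetic slip to fix. The first Sheppard step gives $(-b-m)_m/(1-b-m)_m=(-b-m)/(-b)=(b+m)/b$, not $b/(b+m)$. Likewise, the second step carries $(-a-m)_m/(1-a-m)_m=(a+m)/a$. With these factors the parameter matching closes as you hoped: in the second step $u-x=b+1-c+a=\delta$ and $1+y-v-m=a+1$, so
\[
{}_3F_2\!\left(\begin{matrix}-m,1,1-c-m\\ 1-a-m,1-b-m\end{matrix};1\right)
=\frac{(a+m)(b+m)}{ab}\,
{}_3F_2\!\left(\begin{matrix}-m,1,\delta\\ a+1,b+1\end{matrix};1\right).
\]
The factor $(a+m)(b+m)/(ab)=(a+1)_m(b+1)_m/\bigl((a)_m(b)_m\bigr)$ is exactly what turns your prefactor $(a)_m(b)_m/((c)_m\,m!)$ into the one in the theorem. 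The case $m=1$, where both sides equal $1+ab/c$, confirms this orientation. The induction fallback is therefore unnecessary.

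The paper argues analytically. Assuming $\RePart c>0$ and $\RePart\delta>0$, it takes the moment $I_m=\Phi_{m,1}(c-1;c-a,c-b,c)$. It iterates the recurrence of Theorem~\ref{thm:generalrec}, whose endpoint term and initial value come from Gauss's formula, to write $I_m$ as a gamma factor times the partial sum. It then evaluates the same $I_m$ independently through its ${}_3F_2(1)$ representation and Kouba's terminating formula, and equates the two.

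Your argument is purely finite: one series reversal plus two applications of a textbook terminating transformation. It needs no convergence hypotheses and no external result. Your rational-function remark also makes explicit the removal of the parameter restrictions, which the paper leaves implicit after ``we first assume.'' It further shows that the identity is simply a composite of two classical Sheppard transformations.

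What the paper's route buys is thematic rather than logical. It presents the identity as a byproduct of comparing the moment recurrence with an independent evaluation of the same moment, which is the stated purpose of Section~\ref{sec:finiteidentities}. The harmonic corollaries then follow by $c$-differentiation, which works equally well from your proof.
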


\begin{proof}
We first assume that
\(\RePart c>0\) and \(\RePart\delta>0\). Put
\(
I_m=\Phi_{m,1}(c-1;c-a,c-b,c).
\)
The recurrence in Theorem~\ref{thm:generalrec} becomes
\begin{equation}\label{eq:finitecomparisonrec}
(m+a)(m+b)I_m
=
m(m+c-1)I_{m-1}
+
\frac{\Gamma(c)\Gamma(\delta)}{\Gamma(a)\Gamma(b)}.
\end{equation}
Gauss summation formula gives
\(
I_0=
\frac{\Gamma(c)\Gamma(\delta)}
{\Gamma(a+1)\Gamma(b+1)}.
\)
Iterating \eqref{eq:finitecomparisonrec}, as in
Theorem~\ref{thm:generalsolution}, gives
\begin{equation}\label{eq:finitecomparisonfirst}
I_m=
\frac{\Gamma(c+m)\Gamma(\delta)m!}
{\Gamma(a+m+1)\Gamma(b+m+1)}
\sum_{n=0}^{m}
\frac{(a)_n(b)_n}{(c)_n n!}.
\end{equation}
On the other hand, \eqref{eq:general3F2} gives
\[
(c+m)I_m
={}_3F_2\left(
\begin{matrix}
c-a,c-b,c+m\\
c,c+m+1
\end{matrix};1
\right).
\]
Applying \cite[Corollary~3.3]{Kouba2024} with
\(
(A,B,C,D)=(c-a,c-b,c,c+1)
\)
and using Gauss formula yields
\begin{equation}\label{eq:finitecomparisonsecond}
I_m=
\frac{\Gamma(c)\Gamma(\delta)}
{\Gamma(a+1)\Gamma(b+1)}
{}_3F_2\left(
\begin{matrix}
-m,1,\delta\\
a+1,b+1
\end{matrix};1
\right).
\end{equation}
Comparing \eqref{eq:finitecomparisonfirst} and
\eqref{eq:finitecomparisonsecond}, and then expanding the terminating
\({}_3F_2\), proves \eqref{eq:finitecomparison}.
\end{proof}

\begin{theorem}\label{cor:finiteharmonic} Let
\(r\geq0\). Then
\begingroup\small
\begin{align}
&\sum_{n=0}^{m}
\frac{(a)_n(b)_n}{(c)_n n!}\Hcal_r(n;c)
\notag\\
&\quad=
\frac{(a+1)_m(b+1)_m}{(c)_m m!}
\sum_{j=0}^{m}
(-1)^j\binom{m}{j}
\frac{(\delta)_j j!}{(a+1)_j(b+1)_j}
\notag\\
&\qquad\quad\times
\Bcal_r\Bigl(
H_m^{(1)}(c)+H_j^{(1)}(\delta),
1!\bigl(H_m^{(2)}(c)-H_j^{(2)}(\delta)\bigr),\ldots,
(r-1)!\bigl(
H_m^{(r)}(c)+(-1)^{r-1}H_j^{(r)}(\delta)
\bigr)\Bigr).
\label{eq:finiteharmonic}
\end{align}
\endgroup
For \(r=0\), the Bell polynomial is understood to be \(\Bcal_0=1\),
and for \(r=1\) the list stops after its first argument. More precisely,
for \(1\leq s\leq r\), the \(s\)-th argument of
\(\Bcal_r\) in \eqref{eq:finiteharmonic} is
\[
(s-1)!\left(
H_m^{(s)}(c)+(-1)^{s-1}H_j^{(s)}(\delta)
\right).
\]
\end{theorem}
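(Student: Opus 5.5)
The plan is to obtain \eqref{eq:finiteharmonic} from Theorem~\ref{thm:finitecomparison} by shifting the lower parameter, \(c\mapsto c+t\), and extracting Taylor coefficients in \(t\), exactly as in Lemma~\ref{lem:BellDerivative}. The only change from the infinite case of Section~\ref{sec:harmonicgeneral} is that \(\delta=1+a+b-c\) also moves, \(\delta\mapsto\delta-t\). This second shift is what produces the \(H_j^{(s)}(\delta)\) contributions with alternating signs.

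First I will record that \eqref{eq:finitecomparison} holds as an identity of rational functions in \((a,b,c)\). Both sides are finite sums of rational functions. The proof of Theorem~\ref{thm:finitecomparison} established equality on the open set \(\RePart c>0\), \(\RePart\delta>0\) (with \(a,b\) generic). Hence the identity persists wherever both sides are defined, and in particular it may be applied at \(c+t\) for all small \(t\). Writing \(u=-t\), I then expand each \(t\)-dependent factor as an exponential generating function.

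On the left, the proof of Lemma~\ref{lem:BellDerivative} gives
\[
\frac{1}{(c+t)_n}=\frac{1}{(c)_n}\sum_{r\ge0}\Hcal_r(n;c)\frac{u^r}{r!}.
\]
So the coefficient of \(u^r/r!\) on the left is exactly the left side of \eqref{eq:finiteharmonic}.

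On the right, \(c\) enters only through two factors.

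\begin{itemize}
\item The prefactor satisfies the same expansion as the left side:
\[
\frac{(c)_m}{(c+t)_m}=\exp\Bigl(\sum_{s\ge1}H_m^{(s)}(c)\frac{u^s}{s}\Bigr).
\]
\item The factor \((\delta)_j\) becomes \((\delta+u)_j\), and
\[
\frac{(\delta+u)_j}{(\delta)_j}=\prod_{k=0}^{j-1}\Bigl(1+\frac{u}{\delta+k}\Bigr)=\exp\Bigl(\sum_{s\ge1}(-1)^{s-1}H_j^{(s)}(\delta)\frac{u^s}{s}\Bigr).
\]
\end{itemize}

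The remaining factors \((a+1)_m\), \((b+1)_m\), \(m!\), \(\binom{m}{j}\), \(j!\), \((a+1)_j\) and \((b+1)_j\) do not involve \(c\). Multiplying the two exponentials and writing \(u^s/s=(s-1)!\,u^s/s!\), the definition \eqref{eq:BellDef} shows that the coefficient of \(u^r/r!\) is
\[
\Bcal_r\bigl(x_1,\ldots,x_r\bigr),\qquad x_s=(s-1)!\bigl(H_m^{(s)}(c)+(-1)^{s-1}H_j^{(s)}(\delta)\bigr).
\]
These are precisely the Bell arguments in \eqref{eq:finiteharmonic}. Since both sides are finite sums, comparing coefficients of \(u^r\) termwise in \(j\) gives the identity for every \(r\ge0\). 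The cases \(r=0,1\) are the stated conventions.

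I expect the main obstacle to be bookkeeping rather than analysis. There are two points to get right. The first is the sign: the minus in \(\delta=1+a+b-c\) turns the \(c\)-shift into \(\delta+u\) with \(u=-t\), which flips the sign relative to the \(\Hcal_r(n;c)\) expansion and yields the factor \((-1)^{s-1}\). The second is the justification for using \eqref{eq:finitecomparison} at shifted \(c\) outside the half-planes where its proof was written, which the rational-identity argument above handles.
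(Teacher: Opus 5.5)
Your proposal is correct and is essentially the paper's proof. The paper applies \((-\partial/\partial c)^r\) to \eqref{eq:finitecomparison}, uses Lemma~\ref{lem:BellDerivative} on the left, and applies the Bell-polynomial differentiation formula to the logarithmic derivatives of \((\delta)_j/(c)_m\) on the right; your Taylor-coefficient extraction in \(u=-t\) is the same computation. Your observation that \eqref{eq:finitecomparison} is an identity of rational functions, and so may be used at \(c+t\) outside the region \(\RePart c>0\), \(\RePart\delta>0\), makes explicit a continuation step the paper leaves implicit.
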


\begin{proof}
Apply the operator \((-\partial/\partial c)^r\) to
\eqref{eq:finitecomparison}. Lemma~\ref{lem:BellDerivative} gives the
left-hand side. On the right, the only factor depending on \(c\) in
the \(j\)-th summand is \((\delta)_j/(c)_m\). Its \(s\)-th logarithmic
derivative under \(-\partial/\partial c\) is
\[
(s-1)!\left(
H_m^{(s)}(c)+(-1)^{s-1}H_j^{(s)}(\delta)
\right).
\]
The Bell-polynomial differentiation formula now gives
\eqref{eq:finiteharmonic}.
\end{proof}

\begin{corollary}\label{cor:finitecomplementary}
Let \(0<\alpha<1\), and let \(m,r\) be non-negative integers. Then
\begingroup\small
\begin{align}
&\sum_{n=0}^{m}
\frac{(\alpha)_n(1-\alpha)_n}{(n!)^2}\Hcal_r(n)
\notag\\
&\quad=
(m+\alpha)(m+1-\alpha)
\frac{(\alpha)_m(1-\alpha)_m}{(m!)^2}
\sum_{j=0}^{m}
(-1)^j\binom{m}{j}
\frac{(j!)^2}
{(j+\alpha)(j+1-\alpha)(\alpha)_j(1-\alpha)_j}
\notag\\
&\qquad\quad\times
\Bcal_r\Bigl(
H_m+H_j,
1!\bigl(H_m^{(2)}-H_j^{(2)}\bigr),\ldots,
(r-1)!\bigl(H_m^{(r)}+(-1)^{r-1}H_j^{(r)}\bigr)
\Bigr).
\label{eq:finitecomplementary}
\end{align}
\endgroup
\end{corollary}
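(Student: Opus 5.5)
We obtain \eqref{eq:finitecomplementary} by specializing Theorem~\ref{cor:finiteharmonic} to \((a,b,c)=(\alpha,1-\alpha,1)\). No new analysis is needed; the work is purely algebraic. First I check that the hypotheses behind Theorem~\ref{thm:finitecomparison} hold at this point. Here
\[
\delta=1+a+b-c=1+\alpha+(1-\alpha)-1=1,
\]
so \(\RePart c=1>0\) and \(\RePart\delta=1>0\). Also \(c=1\) is not a non-positive integer. Hence \eqref{eq:finitecomparison}, and therefore \eqref{eq:finiteharmonic}, hold at \(c=1\). Indeed, \eqref{eq:finiteharmonic} was obtained by differentiating in \(c\) an identity valid on an open set of \(c\) around \(c=1\).

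Next I simplify the harmonic arguments. At \(c=1\) and \(\delta=1\), we have \(H_m^{(s)}(1)=H_m^{(s)}\) and \(H_j^{(s)}(1)=H_j^{(s)}\). Similarly \(\Hcal_r(n;1)=\Hcal_r(n)\) by \eqref{eq:Hcal}. So the Bell-polynomial factor in \eqref{eq:finiteharmonic} becomes exactly the one displayed in \eqref{eq:finitecomplementary}. In the \(j\)-sum, \((\delta)_j\,j!=(1)_j\,j!=(j!)^2\).

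The remaining step is the Pochhammer bookkeeping. I would use
\[
(\alpha+1)_j=\frac{(\alpha)_j(j+\alpha)}{\alpha},
\qquad
(2-\alpha)_j=\frac{(1-\alpha)_j(j+1-\alpha)}{1-\alpha},
\]
together with the same identities at \(j=m\). The prefactor becomes
\[
\frac{(\alpha+1)_m(2-\alpha)_m}{(1)_m\,m!}
=\frac{(m+\alpha)(m+1-\alpha)}{\alpha(1-\alpha)}\,
\frac{(\alpha)_m(1-\alpha)_m}{(m!)^2}.
\]
Each summand denominator becomes
\[
(\alpha+1)_j(2-\alpha)_j
=\frac{(j+\alpha)(j+1-\alpha)(\alpha)_j(1-\alpha)_j}{\alpha(1-\alpha)}.
\]
The factor \(1/(\alpha(1-\alpha))\) in the prefactor cancels against the factor \(\alpha(1-\alpha)\) produced by inverting each summand denominator. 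This yields \eqref{eq:finitecomplementary}.

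I expect no real obstacle. The only points needing care are confirming that the specialization \(c=1\), \(\delta=1\) lies within the range where Theorem~\ref{thm:finitecomparison} was proved, and tracking the \(\alpha(1-\alpha)\) cancellation correctly.
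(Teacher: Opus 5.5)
Your proposal is correct and follows the paper's proof exactly: specialize Theorem~\ref{cor:finiteharmonic} at $(a,b,c)=(\alpha,1-\alpha,1)$, where $\delta=1$, and simplify using $(\alpha+1)_j=(\alpha)_j(j+\alpha)/\alpha$ and $(2-\alpha)_j=(1-\alpha)_j(j+1-\alpha)/(1-\alpha)$, so that the factors $\alpha(1-\alpha)$ cancel. Your extra check is a useful detail that the paper leaves implicit: the underlying identity holds on a neighborhood of $c=1$ (there $\delta=2-c$, so $\RePart c>0$ and $\RePart\delta>0$), which justifies the $c$-differentiation.
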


\begin{proof}
Set \((a,b,c)=(\alpha,1-\alpha,1)\) in
Theorem~\ref{cor:finiteharmonic} and simplify the Pochhammer factors.
\end{proof}

\begin{corollary}
\label{cor:finitecentral}
For every non-negative integer \(m\),
\begin{align}
\sum_{n=0}^{m}
\frac{1}{16^n}\binom{2n}{n}^{\!2}
&=
\frac{(2m+1)^2}{16^m}\binom{2m}{m}^{\!2}
\sum_{j=0}^{m}
(-1)^j\binom{m}{j}
\frac{16^j}{(2j+1)^2\binom{2j}{j}^{\!2}},
\label{eq:finitecentral0}\\
\sum_{n=0}^{m}
\frac{1}{16^n}\binom{2n}{n}^{\!2}H_n
&=
\frac{(2m+1)^2}{16^m}\binom{2m}{m}^{\!2}
\sum_{j=0}^{m}
(-1)^j\binom{m}{j}
\frac{16^j(H_m+H_j)}
{(2j+1)^2\binom{2j}{j}^{\!2}}.
\label{eq:finitecentral1}
\end{align}
\end{corollary}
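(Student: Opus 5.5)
The plan is to specialize Corollary~\ref{cor:finitecomplementary} to \(\alpha=\tfrac12\), taking \(r=0\) for \eqref{eq:finitecentral0} and \(r=1\) for \eqref{eq:finitecentral1}. With \(r=0\) the Bell factor is \(\Bcal_0=1\). With \(r=1\) it is \(\Bcal_1(x_1)=x_1=H_m+H_j\), since at \(c=1\) and \(\delta=1+a+b-c=1\) the shifted harmonic numbers reduce to \(H_m\) and \(H_j\). So both identities follow from the same specialization. What remains is to rewrite the Pochhammer factors as central binomial coefficients.

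For the left-hand side, \eqref{eq:binomialexamples} already gives \((1/2)_n^2/(n!)^2=16^{-n}\binom{2n}{n}^2\), and \(\Hcal_1(n)=H_n\) by \eqref{eq:firstHcal}.

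For the prefactor, note that \((m+\tfrac12)^2(1/2)_m^2=(1/2)_{m+1}^2\). Using \((1/2)_m=(2m)!/(4^m m!)\), this becomes
\[
(m+\tfrac12)^2\frac{(1/2)_m^2}{(m!)^2}=\frac{(2m+1)^2}{4}\cdot\frac{\binom{2m}{m}^2}{16^m}.
\]
In the same way, the \(j\)-th summand factor is
\[
\frac{(j!)^2}{(j+\tfrac12)^2(1/2)_j^2}=\frac{4\cdot 16^j}{(2j+1)^2\binom{2j}{j}^2}.
\]
The factors \(\tfrac14\) and \(4\) cancel. What is left is exactly \(\frac{(2m+1)^2}{16^m}\binom{2m}{m}^2\sum_j(-1)^j\binom{m}{j}\frac{16^j}{(2j+1)^2\binom{2j}{j}^2}\), multiplied in the second identity by \(H_m+H_j\).

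There is no real obstacle. The only step needing care is checking the constants \(4^{\pm1}\) in the two Pochhammer-to-binomial conversions, so that they cancel rather than leave a stray power of \(4\). As a sanity check I would evaluate both sides at \(m=0\) and \(m=1\). At \(m=0\), both sides of \eqref{eq:finitecentral0} equal \(1\). At \(m=1\), the right side is \(36/16\cdot(1-16/36)=36/16-1=5/4\), and the left side is \(1+4/16=5/4\). For \eqref{eq:finitecentral1} at \(m=1\), the left side is \(1/4\), and the right side is \(\tfrac94\bigl(H_1-\tfrac{16}{36}(H_1+H_1)\bigr)=\tfrac94\cdot\tfrac19=\tfrac14\), which confirms the signs and constants.
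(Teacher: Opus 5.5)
Your proposal is correct and matches the paper's proof: specialize Corollary~\ref{cor:finitecomplementary} to \(\alpha=\tfrac12\) with \(r=0,1\), note that \(c=\delta=1\) reduces the Bell argument to \(H_m+H_j\), and convert the Pochhammer factors via \eqref{eq:binomialexamples}. Your explicit check that the factors \(\tfrac14\) and \(4\) cancel, together with the numerical checks at \(m=0,1\), supplies details that the paper's one-line proof leaves implicit.
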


\begin{proof}
Take \(\alpha=1/2\) and \(r=0,1\) in
Corollary~\ref{cor:finitecomplementary}, and use
\eqref{eq:binomialexamples}.
\end{proof}

\begin{corollary}[The \(\alpha=1/4\) product-binomial case]
\label{cor:finitequartic}
For every non-negative integer \(m\),
\begingroup\small
\begin{align}
&\sum_{n=0}^{m}
\frac{\binom{2n}{n}\binom{4n}{2n}}{64^n}
\notag\\
&\quad=
\frac{(4m+1)(4m+3)}{64^m}
\binom{2m}{m}\binom{4m}{2m}
\sum_{j=0}^{m}
(-1)^j\binom{m}{j}
\frac{64^j}
{(4j+1)(4j+3)\binom{2j}{j}\binom{4j}{2j}},
\label{eq:finitequartic0}\\
&\sum_{n=0}^{m}
\frac{\binom{2n}{n}\binom{4n}{2n}}{64^n}H_n
\notag\\
&\quad=
\frac{(4m+1)(4m+3)}{64^m}
\binom{2m}{m}\binom{4m}{2m}
\sum_{j=0}^{m}
(-1)^j\binom{m}{j}
\frac{64^j(H_m+H_j)}
{(4j+1)(4j+3)\binom{2j}{j}\binom{4j}{2j}},
\label{eq:finitequartic1}\\
&\sum_{n=0}^{m}
\frac{\binom{2n}{n}\binom{4n}{2n}}{64^n}
\left(H_n^2+H_n^{(2)}\right)
\notag\\
&\quad=
\frac{(4m+1)(4m+3)}{64^m}
\binom{2m}{m}\binom{4m}{2m}
\sum_{j=0}^{m}
(-1)^j\binom{m}{j}
\frac{64^j}
{(4j+1)(4j+3)\binom{2j}{j}\binom{4j}{2j}}
\notag\\
&\qquad\quad\times
\left((H_m+H_j)^2+H_m^{(2)}-H_j^{(2)}\right).
\label{eq:finitequartic2}
\end{align}
\endgroup
\end{corollary}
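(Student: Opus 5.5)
The plan is to specialize Corollary~\ref{cor:finitecomplementary} to \(\alpha=1/4\) with \(r=0,1,2\), and then convert every Pochhammer factor into binomial coefficients using \eqref{eq:binomialexamples}. For \(r=0\) the Bell polynomial is \(1\). For \(r=1\) it is \(\Bcal_1(x_1)=x_1=H_m+H_j\). For \(r=2\) it is \(\Bcal_2(x_1,x_2)=x_1^2+x_2\) with \(x_1=H_m+H_j\) and \(x_2=1!\,(H_m^{(2)}-H_j^{(2)})\), which gives exactly the bracket in \eqref{eq:finitequartic2}. On the left-hand side, \eqref{eq:binomialexamples} gives \(a_n(1/4)=64^{-n}\binom{2n}{n}\binom{4n}{2n}\), and by \eqref{eq:firstHcal} the weights are \(\Hcal_0(n)=1\), \(\Hcal_1(n)=H_n\), \(\Hcal_2(n)=H_n^2+H_n^{(2)}\). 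So the left sides agree at once.

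The remaining work is the prefactors on the right.

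\textbf{Outer factor.} The outer factor is
\[
(m+\tfrac14)(m+\tfrac34)\,a_m(1/4)=\frac{(4m+1)(4m+3)}{16}\cdot\frac{1}{64^m}\binom{2m}{m}\binom{4m}{2m}.
\]

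\textbf{Inner factor.} The inner summand factor is
\[
\frac{(j!)^2}{(j+\tfrac14)(j+\tfrac34)(\tfrac14)_j(\tfrac34)_j}
=\frac{16}{(4j+1)(4j+3)}\cdot\frac{1}{a_j(1/4)}
=\frac{16\cdot 64^j}{(4j+1)(4j+3)\binom{2j}{j}\binom{4j}{2j}}.
\]

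The constants \(1/16\) and \(16\) cancel, which leaves exactly the displayed right-hand sides of \eqref{eq:finitequartic0}--\eqref{eq:finitequartic2}.

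This is pure bookkeeping and I see no real obstacle. The only point to check carefully is the identity \((1/4)_j(3/4)_j/(j!)^2=64^{-j}\binom{2j}{j}\binom{4j}{2j}\). It follows from the Gauss multiplication formula, \((1/4)_j(3/4)_j=(4j)!/(64^j(2j)!)\), since \((1/2)_{2j}\,4^{2j}=(4j)!/(2j)!\) and \((1/4)_j(3/4)_j=4^{-2j}(1/2)_{2j}\cdot 4^{2j}/16^{j}\). One would also confirm the \(m=0\) case directly as a sanity check.
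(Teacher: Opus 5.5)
Your proposal is correct and follows the paper's proof exactly: specialize Corollary~\ref{cor:finitecomplementary} to \(\alpha=1/4\) with \(r=0,1,2\), rewrite the Pochhammer factors using \eqref{eq:binomialexamples}, and use \(\Bcal_2(x_1,x_2)=x_1^2+x_2\); the factors \(1/16\) and \(16\) cancel, as you note. There is one small slip in your optional check: the duplication formula is \((1/2)_{2j}=4^j(1/4)_j(3/4)_j\), which gives \((1/4)_j(3/4)_j=4^{-j}(1/2)_{2j}=(4j)!/(64^j(2j)!)\), whereas your displayed chain simplifies to \(16^{-j}(1/2)_{2j}\) and is off by a factor \(4^j\). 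This does not affect the argument, since the target identity is correct and is already \eqref{eq:binomialexamples}.
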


\begin{proof}
Take \(\alpha=1/4\) and \(r=0,1,2\) in
Corollary~\ref{cor:finitecomplementary}, use
\eqref{eq:binomialexamples}, and note that
\(\Bcal_2(x_1,x_2)=x_1^2+x_2\).
\end{proof}

\section{Conclusion}\label{sec:conclusion}
We derived and solved a first-order recurrence for moments of the Gauss
hypergeometric function in both the ordinary and alternating cases. The
resulting formulas apply to higher powers of the denominator and to
denominators of the form \((dn+m+1)^K\).

For several rational choices of the parameters, the hypergeometric
coefficients reduce to products of binomial coefficients. This leads to
binomial-series and elliptic-integral identities. Differentiation with
respect to \(c\) also gives identities involving harmonic numbers of
every order. For rational parameter values, the required initial values
can be expressed using Bell polynomials, logarithms, zeta values, and
Dirichlet \(L\)-values. A comparison with Kouba's terminating formula
also gives finite hypergeometric and binomial--harmonic identities.

The same method may be useful for other special functions whose
differential equations and endpoint values can be evaluated explicitly.

\section*{Acknowledgments}

The author sincerely thanks Dr.\ John M. Campbell for his encouragement,
valuable comments, and kind support. The author is also grateful to
Dr.\ Omran Kouba for bringing \cite{Chen2021,Kouba2024,Stewart2022}
to his attention.


\small
\begin{thebibliography}{99}

\bibitem{Adamchik}
V. S. Adamchik,
A certain series associated with Catalan's constant,
\emph{Z. Anal. Anwend.} 21 (2002), no.~3, 817--826.

\bibitem{AndrewsAskeyRoy}
G. E. Andrews, R. Askey, and R. Roy,
\emph{Special Functions},
Encyclopedia of Mathematics and its Applications, Vol.~71,
Cambridge University Press, Cambridge, 1999.

\bibitem{Apostol}
T. M. Apostol,
\emph{Introduction to Analytic Number Theory},
Springer, New York, 1976.

\bibitem{Au2026}
K. C. Au,
Discovering hypergeometric series with harmonic numbers via Wilf--Zeilberger seeds,
preprint, arXiv:2602.08721, 2026.

\bibitem{Bailey}
W. N. Bailey,
\emph{Generalized Hypergeometric Series},
Cambridge Tracts in Mathematics and Mathematical Physics, No.~32,
Cambridge University Press, Cambridge, 1935.

\bibitem{Bhandari2022}
N. Bhandari,
Infinite series associated with the ratio and product of central binomial coefficients,
\emph{J. Integer Seq.} 25 (2022), Article 22.6.5.

\bibitem{BorweinBroadhurstKamnitzer}
J. M. Borwein, D. J. Broadhurst, and J. Kamnitzer,
Central binomial sums, multiple Clausen values, and zeta values,
\emph{Experiment. Math.} 10 (2001), no.~1, 25--34.

\bibitem{CampbellChen}
J. M. Campbell and K.-W. Chen,
Explicit identities for infinite families of series involving squared binomial coefficients,
\emph{J. Math. Anal. Appl.} 513 (2022), no.~2, Article 126219.

\bibitem{CampbellDAS}
J. M. Campbell, J. D'Aurizio, and J. Sondow,
On the interplay among hypergeometric functions, complete elliptic integrals, and Fourier--Legendre expansions,
\emph{J. Math. Anal. Appl.} 479 (2019), no.~1, 90--121.

\bibitem{Cantarini2025}
M. Cantarini,
Connection formulas between Fourier--Legendre/Jacobi expansions and central binomial series,
\emph{Integral Transforms Spec. Funct.} 36 (2025), no.~11, 940--960,
\href{https://doi.org/10.1080/10652469.2025.2473061}{doi:10.1080/10652469.2025.2473061}.

\bibitem{Chen2021}
K.-W. Chen,
Clausen's series \({}_3F_2(1)\) with integral parameter differences,
\emph{Symmetry} 13 (2021), no.~10, Article 1783,
\href{https://doi.org/10.3390/sym13101783}{doi:10.3390/sym13101783}.

\bibitem{Chen2025}
K.-W. Chen,
Hypergeometric series and generalized harmonic numbers,
\emph{J. Difference Equ. Appl.} 31 (2025), no.~1, 85--114,
\href{https://doi.org/10.1080/10236198.2024.2388746}{doi:10.1080/10236198.2024.2388746}.

\bibitem{ChuCampbell}
W. Chu and J. M. Campbell,
Harmonic sums from the Kummer theorem,
\emph{J. Math. Anal. Appl.} 501 (2021), Article 125179.

\bibitem{ChuDeDonno}
W. Chu and L. De Donno,
Hypergeometric series and harmonic number identities,
\emph{Adv. in Appl. Math.} 34 (2005), no.~1, 123--137.

\bibitem{Comtet}
L. Comtet,
\emph{Advanced Combinatorics: The Art of Finite and Infinite Expansions},
D. Reidel Publishing Company, Dordrecht, 1974.

\bibitem{Chyzak}
F. Chyzak,
An extension of Zeilberger's fast algorithm to general holonomic functions,
\emph{Discrete Math.} 217 (2000), 115--134.

\bibitem{DLMF}
F. W. J. Olver et al. (eds.),
\emph{NIST Digital Library of Mathematical Functions},
\url{https://dlmf.nist.gov/}.

\bibitem{Kouba2024}
O. Kouba,
Applications of a hypergeometric identity and Ramanujan-like series,
\emph{Bull. Malays. Math. Sci. Soc.} 47 (2024), Article 26,
\href{https://doi.org/10.1007/s40840-023-01627-7}{doi:10.1007/s40840-023-01627-7}.

\bibitem{Koutschan}
C. Koutschan,
Creative telescoping for holonomic functions,
in C. Schneider and J. Bl\"umlein (eds.),
\emph{Computer Algebra in Quantum Field Theory},
Springer, Vienna, 2013, pp.~171--194.

\bibitem{LiChu2023}
C. Li and W. Chu,
Infinite series about harmonic numbers inspired by Ramanujan-like formulae,
\emph{Electron. Res. Arch.} 31 (2023), no.~8, 4611--4636.

\bibitem{LiChu2024}
C. Li and W. Chu,
Gauss' second theorem for \({}_2F_1(1/2)\)-series and novel harmonic series identities,
\emph{Mathematics} 12 (2024), Article 1381.

\bibitem{LiChu2025}
C. Li and W. Chu,
Five classes of binomial/harmonic series of convergence rate \(-1/4\),
\emph{AIMS Math.} 10 (2025), no.~7, 16264--16290.

\bibitem{LiChu2026}
C. Li and W. Chu,
Infinite series of convergence rate \(-1/4\) about generalized harmonic numbers,
\emph{Electron. Res. Arch.} 34 (2026), no.~9, 5907--5940,
\href{https://doi.org/10.3934/era.2026262}{doi:10.3934/era.2026262}.

\bibitem{ParisKaminski}
R. B. Paris and D. Kaminski,
\emph{Asymptotics and Mellin--Barnes Integrals},
Encyclopedia of Mathematics and its Applications, Vol.~85,
Cambridge University Press, Cambridge, 2001.

\bibitem{PauleSchneider}
P. Paule and C. Schneider,
Computer proofs of a new family of harmonic number identities,
\emph{Adv. in Appl. Math.} 31 (2003), no.~2, 359--378.

\bibitem{Slater}
L. J. Slater,
\emph{Generalized Hypergeometric Functions},
Cambridge University Press, Cambridge, 1966.

\bibitem{Stanley}
R. P. Stanley,
Differentiably finite power series,
\emph{European J. Combin.} 1 (1980), no.~2, 175--188.

\bibitem{Stewart2022}
S. M. Stewart,
A simple proof and some applications of an integral representation for
the Catalan numbers,
\emph{Appl. Math. E-Notes} 22 (2022), 637--643.


\bibitem{WeiGong}
C. Wei and D. Gong,
The derivative operator and harmonic number identities,
\emph{Ramanujan J.} 34 (2014), 361--371.

\bibitem{Zeilberger}
D. Zeilberger,
A holonomic systems approach to special functions identities,
\emph{J. Comput. Appl. Math.} 32 (1990), no.~3, 321--368.

\end{thebibliography}
\end{document}